\documentclass[11pt,a4paper]{amsart}
\usepackage{amsmath}
\usepackage{mathrsfs}
\usepackage{amssymb}
\usepackage{amsfonts}
\usepackage{amscd}
\usepackage{amsthm}
\usepackage{xypic}
\usepackage{enumerate}
\usepackage{fancyhdr}
\usepackage{color}

\input xy
\xyoption{all}

\newcommand{\A}{\mathbb{A}}

\newcommand{\C}{\mathbb{C}}

\renewcommand{\P}{\mathbb{P}}
\newcommand{\Q}{\mathbb{Q}}
\newcommand{\R}{\mathbb{R}}
\newcommand{\Z}{\mathbb{Z}}

\newcommand{\mmD}{\mathcal{D}}

\newcommand{\mmP}{\mathcal{P}}

\newcommand{\mmZ}{\mathcal{Z}}

\newcommand{\mcH}{\mathcal{H}}

\DeclareMathOperator*{\cl}{cl}

\DeclareMathOperator*{\Div}{div}

\DeclareMathOperator*{\im}{im}

\DeclareMathOperator*{\Spec}{Spec}

\newcommand{\ALT}{\operatorname{Alt}}

\newcommand{\RES}{\operatorname{Res}}

\newcommand{\EE}{{}^{\prime }E}

\newcommand{\IId}{\operatorname{Id}}
\newcommand{\DDV}{\operatorname{div}}

\hfuzz3pt 

\newcommand{\vist}{\begin{flushright}
$\square$
\end{flushright}}

\numberwithin{equation}{section}
\newtheorem{thm}[equation]{Theorem}
\newtheorem{prop}[equation]{Proposition}
\newtheorem{cor}[equation]{Corollary}
\newtheorem{lemma}[equation]{Lemma}
\newtheorem{conj}[equation]{Conjecture}

\theoremstyle{definition}
\newtheorem{df}[equation]{Definition}
\newtheorem{obs}[equation]{Remark}

\newenvironment{enumerate*}[1][{}]{\begin{itemize}}{\end{itemize}}

\textwidth=420pt \hoffset = -1.3truecm

\pagestyle{headings}

\title{On Goncharov's regulator and higher arithmetic Chow groups}
\author{J. I. Burgos Gil}
\address{Instituto de ciencias matemáticas CSIC-UAM-UC3M-UCM, Spain}
\email{jiburgosgil@gmail.com}

\author{E. Feliu}
\address{Facultat de Matem\`atiques, Universitat de Barcelona, Spain}
\email{efeliu@ub.edu}

\author{Y. Takeda}
\address{Faculty of Mathematics, Kyushu University, Japan}
\email{yutakeda@math.kyushu-u.ac.jp}

\date{\today}
\thanks{This work was partially supported by the project MTM2006-14234-C02-01
and by the JSPS Grant-in-Aid for Scientific Research (No.17340009)}


\newif\ifprivate
\privatetrue

\begin{document}

\begin{abstract}
In this paper we show that the regulator defined by Goncharov in
\cite{Goncharov} from higher algebraic Chow groups to
Deligne-Beilinson cohomology agrees with Beilinson's regulator. We
give a direct comparison of Goncharov's regulator to the construction
given by Burgos and Feliu in \cite{BFChow}. As a consequence, we show
that the higher arithmetic Chow groups defined by Goncharov agree, for
all projective arithmetic varieties over an arithmetic field, with the
ones defined by Burgos and Feliu.

\bigskip \noindent \tiny{AMS 2000 Mathematics subject classification: 14G40, 14C15, 14F43 }
\end{abstract}

\maketitle

\section*{Introduction}
Let $X$ be an arithmetic variety over a field, i.e. a regular scheme
which is flat and quasi-projective over an arithmetic field. Assuming
that $X$ is projective, Goncharov introduced in \cite{Goncharov} the
higher arithmetic Chow groups of $X$, $\widehat{CH}^p(X,n)$, which fit
in a long exact sequence of the form
\begin{align*}
  \cdots \rightarrow \widehat{CH}^{p}(X,n) \xrightarrow{\zeta}
  CH^{p}(X,n) \xrightarrow{\rho} H_{\mmD}^{2p-n}(X,\R(p))
  \xrightarrow{a} \widehat{CH}^{p}(X,n-1) \rightarrow \cdots \\ \cdots
  \rightarrow CH^p(X,1) \xrightarrow{\rho} \mmD^{2p-1}(X,p) /
  \im d_{\mmD} \xrightarrow{a} \widehat{CH}^p(X) \xrightarrow{\zeta}
  CH^p(X) \rightarrow 0.
\end{align*}
Here, $\widehat{CH}^p(X)$ denote
the arithmetic Chow groups defined by Gillet and Soul\'e in
\cite{GilletSouleIHES}, $H_{\mmD}^{2p-*}(X,\R(p))$ are the real
Deligne-Beilinson cohomology groups and $(\mmD^{\ast}(X,\ast),
  d_{\mmD})$ is the Deligne complex of differential forms.

The groups $\widehat{CH}^p(X,n)$ are obtained as the homology groups
of
the simple complex associated to a regulator morphism
$$ Z^p(X,*) \xrightarrow{\mmP}\  \tau \mathcal {D}_{D}^{2p-*}(X,p),$$
from the chain complex $Z^p(X,*)$ computing the higher algebraic Chow
groups of Bloch, to the Deligne complex of currents $ \tau \mathcal
{D}_{D}^{2p-*}(X,p)$.

Goncharov's definition left open the question whether the composition
of the isomorphism $K_n(X)_{\Q}\cong \bigoplus_{p\geq 0}
CH^p(X,n)_{\Q}$ given in \cite{Bloch1} with the morphism induced by
$\mmP$ agrees with
Beilinson's regulator. In addition, the possibility of defining pull-back
morphisms and a product structure on $\bigoplus_{p,n}
\widehat{CH}^{p}(X,n)$ was also left open.

Later, in \cite{BFChow}, Burgos and Feliu introduced a new definition
of the higher arithmetic Chow groups, suitable for quasi-projective
arithmetic varieties over a field. The main difference with
Goncharov's construction was the use of the Deligne complex of
differential forms with logarithmic singularities instead of the
Deligne complex of currents. This enabled one to have well-defined
pull-backs and a product structure on $\bigoplus_{p,n}
\widehat{CH}^{p}(X,n)$. The new definition was as well based on the
definition of a regulator, which was shown to induce Beilinson's
regulator in $K$-theory.

In loc. cit., it was left a comparison of the two definitions of
higher arithmetic Chow groups. In this paper, we show that both
definitions agree in the
case of proper arithmetic varieties over an arithmetic field. This is
shown by a direct comparison of  Goncharov's regulator and the one
introduced by Burgos and Feliu in \cite{BFChow}.

\medskip

The paper is organized as follows. The first four sections contain the
required preliminaries. The first section contains the notation
that will be used in the sequel. Section 2 and 3 cover the necessary
background on Deligne-Beilinson cohomology and Bloch higher algebraic
Chow groups respectively.  In section 4 we review the construction of the
regulator given by Burgos and Feliu.  From section 5 to section 7, we find the
core of this work. In section 5 we introduce the differential forms $T_m$ and a few properties are shown.
A comparison of Wang's forms and the differential forms given by Goncharov in his construction of  the regulator is performed.
In section 6, the comparison of regulators is done and finally, in section 7 we
prove that the higher arithmetic Chow groups given by Goncharov and the ones given by Burgos and Feliu agree.

\section{Notation}

\subsection{Notation on (co)chain complexes}
We use the standard conventions on (co)chain complexes. By a (co)chain
complex we mean a (co)chain complex over the category of abelian
groups.  The \emph{cochain complex associated to a chain complex}
$A_*$ is simply denoted by $A^*$ and the \emph{chain complex
  associated to a cochain complex} $A^*$ is denoted by $A_*$.

The \emph{translation of a cochain complex} $(A^*,d_A)$ by an integer $m$ is denoted by $A[m]^*$.
Note that $A[m]^{n}=A^{m+n}$ and the differential of $A[m]^*$ is
$(-1)^m d_A$. If $(A_*,d_A)$ is a \emph{chain complex}, then the
translation of $A_*$ by an integer $m$ is denoted by $A[m]_*$. In
this case the differential is also $(-1)^m d_A$ but
$A[m]_{n}=A_{n-m}$.

The \emph{simple complex} associated to an iterated chain complex
$A_*$ is denoted by $s(A)_*$ and the analogous notation is used for
the simple complex associated to an iterated cochain complex (see
\cite{BurgosKuhnKramer}, $\S$2 for definitions).  The \emph{simple of a
  cochain map} $A^*\xrightarrow{f} B^*$ is the cochain complex
$(s(f)^*,d_s)$ with $s(f)^n=A^n\oplus B^{n-1}$, and differential
$d_s(a,b)=(d_A a,f(a)-d_Bb)$.  Note that this complex is the cone of
$-f$ shifted by 1. There is an associated long exact sequence
\begin{equation}\label{longsimple}
\dots \rightarrow H^n(s(f)^*) \rightarrow H^n(A^*) \xrightarrow{f}
H^n(B^*) \rightarrow H^{n+1}(s(f)^*) \rightarrow \cdots
\end{equation}
Equivalent results can be stated for chain complexes.

Following Deligne \cite{DeligneHodgeII}, given a cochain complex $A^*$
and an integer $n$, we denote by $\tau_{\leq n}A^*$, $\tau_{\geq
  n}A^*$ the \emph{canonical truncations of $A^*$ at degree $n$}.

\subsection{Cubical abelian groups and chain complexes}\label{cubical}
Let $C_{\cdot}=\{C_n\}_{n\geq 0}$ be a cubical abelian group. We
denote the face maps by $\delta_i^j: C_n\rightarrow C_{n-1}$, for
$i=1,\dots,n$ and $j=0,1$, and the degeneracy maps by $\sigma_i:
C_n\rightarrow C_{n+1}$, for $i=1,\dots,n+1$. Let $D_n\subset C_n$ be
the subgroup of \emph{degenerate elements} of $C_n$,

By $C_*$ we denote the \emph{associated chain complex}, that is, the
chain complex whose $n$-th graded piece is $C_n$ and whose
differential is given by $\delta= \sum_{i=1}^n
\sum_{j=0,1}(-1)^{i+j}\delta_i^j.$ We fix the \emph{normalized chain
  complex} associated to $C_{\cdot}$, $NC_*$, to be the chain complex
whose $n$-th graded group is $NC_n := \bigcap_{i=1}^{n} \ker
\delta_i^1,$ and whose differential is $ \delta=\sum_{i=1}^n
(-1)^{i}\delta_i^{0}.$ It is well-known that there is a decomposition
of chain complexes $C_* \cong NC_* \oplus D_* $ giving an isomorphism
of chain complexes $NC_* \cong C_*/D_*.$

\section{Deligne-Beilinson cohomology}
\label{dbcohomology}\label{deligne}

In this paper we use the definitions and conventions on
Deligne-Beilinson cohomology given in \cite{Burgos2} and
\cite{BurgosKuhnKramer}, $\S$5.

As defined in \cite{BurgosKuhnKramer}, $\S$5.2, for any Dolbeault
complex $A = (A_{\R}, d_A)$ there is an associated cochain complex
called the \emph{Deligne complex} and denoted by
$(\mmD^*(A,*),d_{\mmD})$.

Let $X$ be a complex algebraic manifold.  Let
$E_{\mathbb R}^n(X)$ (resp. $E_{\mathbb R,c}^n(X)$) be the space of
real smooth differential forms (resp. differential forms with compact
support) of degree $n$ on $X$, and let $\EE_{\mathbb R}^n(X)$ be the
space of real currents on $X$ of degree $n$, that is, the topological
dual of $E_{\mathbb R,c}^{-n}(X)$. One denotes $\R(p)=(2\pi i)^p
\cdot \R \subset \C$. Accordingly, we write $E_{\mathbb R}^n(X,p)=(2\pi
i)^{p}\cdot E_{\mathbb R}^n(X)$ and $\EE_{\mathbb R}^n(X,p)=(2\pi
i)^{p}\cdot \EE_{\mathbb R}^n(X)$.

When $X$ is equidimensional of dimension $d$, we write
\begin{equation}\label{eq:9}
  D_{\mathbb R}^{\ast}(X)=\EE_{\mathbb{R}}^{\ast}(X)[-2d](-d).
\end{equation}
Hence $D_{\mathbb{R}}^{n}(X,p)$ is the topological dual of $E_{\mathbb
  R}^{2d-n}(X,d-p)$.

\subsection{Deligne complex of differential forms}
Let $(E_{\log,\R}^*(X),d)$ be the complex of \emph{real differential
  forms with logarithmic singularities along infinity} \cite{Burgos3} and let
$E_{\log,\R}^*(X)(p)$ denote the vector space of real differential
forms with logarithmic singularities along infinity, twisted by $p$.
Since the complex $(E_{\log,\R}^*(X),d)$ is a Dolbeault complex (see
\cite{BurgosKuhnKramer}, $\S$5.2) we can consider the Deligne complex
of differential forms with logarithmic singularities
$$(\mathcal{D}_{\log}^*(X,p),d_{\mmD}):=(\mathcal{D}^*(E_{\log,\R}^*(X),p),d_{\mmD}).$$
This complex is functorial on $X$.  It computes the real
Deligne-Beilinson cohomology of $X$, that is, there is an isomorphism
$$H^n(\mmD^*_{\log}(X,p)) \cong H_{\mmD}^n(X,\R(p)).$$
Morever, the Deligne-Beilinson cohomology product structure can be
described by a cochain morphism on the Deligne complex (see
\cite{Burgos2})
$$ \mmD^n(X,p) \otimes \mmD^m(X,q)  \xrightarrow{\bullet}  \mmD^{n+m}(X,p+q). $$
This product is graded commutative and satisfies the Leibniz rule, but
it is only associative up to homotopy.

If $X$ is compact, then we simply denote by $\mmD^*(X,p)$ the Deligne
complex of differential forms on $X$.

\subsection{Currents} \label{sec:currents}
Assume that $X$ is compact and equidimensional of dimension $d$.
The complex $D_{\mathbb{R}}^{\ast}(X)$ has also a structure of Dolbeault
complex, and hence, there is an associated Deligne complex denoted by
$$(\mathcal{D}_{D}^*(X,p),d_{\mmD}):=(\mathcal{D}^*(D_{\R}^*(X),p),d_{\mmD}).
$$

The current associated to every differential form gives a
quasi-isomorphism of Deligne complexes
\begin{equation}
  \label{eq:14}
\mathcal{D}^*(X,p)\xrightarrow{[\cdot]}
\mathcal{D}_{D}^*(X,p),\qquad \alpha \mapsto [\alpha],
\end{equation}
where $[\alpha ]$ is the current
\begin{displaymath}
  [\alpha ](\omega )=\frac{1}{(2\pi i)^{d}} \int_{X}\omega \land \alpha.
\end{displaymath}

Therefore, the cohomology groups of $\mathcal{D}_{D}^*(X,p)$
are isomorphic to the Deligne-Beilinson cohomology groups
of $X$:
$$
H^n(\mathcal{D}_{D}^*(X,p))\cong H_{\mathcal D}^n(X, \mathbb R(p)).
$$

Recall that we are using the conventions of \cite{BurgosKuhnKramer}
concerning the twisting and the real structures. In particular, if $Y$
is a subvariety of codimension $p$ then the current integration along
$Y$, denoted $\delta _{Y}$ is given by
\begin{displaymath}
  \delta _{Y}(\omega )=\frac{1}{(2\pi i)^{d-p}}\int _{\widetilde
    Y}\iota ^{\ast}\omega ,
\end{displaymath}
for $\iota :\widetilde Y\longrightarrow X$ a resolution of
singularities of $Y$. Hence, again using the conventions of
\cite{BurgosKuhnKramer} we have $\delta _{Y}\in
\mathcal{D}_{D}^{2p}(X,p)$ and is a representative of the class
$\cl(Y)\in H^{2p}_{\mathcal{D}}(X,p)$.

\section{Bloch Higher Chow groups}\label{higherchow}
We recall here the definition of \emph{higher algebraic Chow groups}
given by Bloch in \cite{Bloch1}.  Initially, they were defined using
the chain complex associated to a simplicial abelian group. An
alternative definition can be given using a cubical presentation, as
developed by Levine in \cite{Levine1}. Both constructions are
analogous, however, the cubical setting is more suitable to define
products.  These two settings are recalled here.

Let $k$ be a field and $\mathbb P^n$ the projective space of dimension
$n$ over $k$.  Fix $X$ to be an equidimensional quasi-projective
algebraic scheme of dimension $d$ over the field $k$.

\subsection{The simplicial Bloch complex}\label{simplicial}
We recall first the definition of the higher Chow groups using a
simplicial setting as given by Bloch in \cite{Bloch1}.  We fix
homogeneous coordinates $z_0,\ldots,z_n$ of $\mathbb P^n$ and put
$\Delta^n=\mathbb P^n\setminus H_n,$ where $H_n$ is the hyperplane defined by
$z_0+\cdots +z_n=0$.
The collection $\{\Delta^n\}_{n\geq 0}$ has a cosimplicial scheme
structure, with coface maps denoted by $\partial^i$.
The faces of $\Delta^n$ are closed subschemes which arise as the image
of compositions of coface maps. A face of $X\times \Delta ^{n}$ is a
closed subscheme of the form $X\times F$, for $F$ a face of $\Delta
^{n}$.

We denote by $Z_s^p(X,n)$ the free abelian group generated
by the codimension $p$ closed irreducible
subvarieties of $X\times \Delta^{n}$, which intersect properly
all the faces of $X\times \Delta^n$.

Then, intersection with the face $\partial^i(X\times \Delta^{n-1})$
gives a map $\partial_i:Z_s^p(X, n)\to Z_s^p(X, n-1)$.
Setting
$$
\partial =\sum_{i=0}^n(-1)^i\partial_i:Z_s^p(X,n)\longrightarrow
Z_s^p(X,n-1),
$$
then $(Z_s^p(X, *), \partial )$ is a chain complex of abelian groups.
The \emph{higher algebraic Chow groups} of $X$ are the homology groups of this
complex:
$$
CH^p(X,n)=H_n(Z_s^p(X, *)).
$$
To emphasize explicitly that this higher algebraic Chow groups are given using the simplicial setting, we will write $$CH_{s}^p(X,n)=H_n(Z_s^p(X, *)).$$

\subsection{The cubical Bloch complex}\label{cubicalp1}
Consider $\P^1$ the projective line over $k$ and let $ \square =
\P^1\setminus \{1\}\cong \A^1.$ The collection
$\{\square^{n}=\square\times\overset{n}{\dots}\times\square\}_{n\ge
  0}$
has a cocubical scheme structure, with coface maps
denoted by $\delta^i_0,\delta_1^i$ and codegeneracy maps denoted by
$\sigma ^{i}$. Specifically, for $i=1,\dots,n$, the coface and codegeneracy maps
are defined as
\begin{eqnarray*}
 \delta_0^i(x_1,\dots,x_{n-1}) &=& (x_1,\dots,x_{i-1},0,x_{i},\dots,x_{n-1}), \\
\delta_1^i(x_1,\dots,x_{n-1}) &=&
(x_1,\dots,x_{i-1},\infty,x_{i},\dots,x_{n-1}), \\
\sigma^i(x_1,\dots,x_n) &=& (x_1,\dots,x_{i-1},x_{i+1},\dots,x_n).
\end{eqnarray*}
Note that the cofaces are closed immersions and the codegeneracies are
flat maps.

An $r$-dimensional \emph{face} of $\square^n$ is any closed subscheme
of the form
$\delta^{i_1}_{j_1}\cdots \delta^{i_r}_{j_r}(\square^{n-r})$. Faces of
$X\times \square^{n}$ are defined accordingly.

Let $Z^{p}_c(X,n)$ be the free abelian group generated by the
codimension $p$ closed irreducible subvarieties of $X\times
\square^{n}$, which intersect properly all the faces of $X\times
\square^n$.  The pull-back by the coface and codegeneracy maps of
$\square^{\cdot}$ endow $Z^{p}_c(X,\cdot)$ with a cubical abelian
group structure. Let $(Z^{p}_c(X,*),\delta)$ be the associated chain
complex and consider the \emph{normalized chain complex} associated to
$Z^p_c(X,*)$, denoted by $Z^p_c(X,*)_0$ with
$$Z^{p}_c(X,n)_{0}:= NZ^p_c(X,n)=\bigcap_{i=1}^{n} \ker \delta^1_i.$$

Denote the homology groups of this complex by
$$CH_{c}^p(X,n) =H_n(Z^{p}_c(X,*)_{0}).$$
The subscript $c$ refers to the use of the cubical setting.
The \emph{higher Chow groups} defined by Bloch are isomorphic to
$CH_{c}^p(X,n)$. That is, there is a natural isomorphism
(\cite{Levine1}, Theorem 4.7):
\begin{displaymath}
  CH^p_c(X,n)\cong CH^p_s(X,n),\qquad \forall n,p\geq 0.
\end{displaymath}

\section{Burgos-Feliu construction of the Beilinson
  regulator}\label{BFregulator}
We review here briefly  the construction of the Beilinson
regulator given by Burgos and Feliu in \cite{BFChow}.

Let $X$ be a complex algebraic variety and let $\square$ be as
in $\S$\ref{cubicalp1}. For simplicity we will assume that $X$ is proper although in
 \cite{BFChow} the construction is  also done for open varieties.
Consider the smooth compactifications of $X\times \square^n$ given by
$X\times (\P^1)^n$. We denote $D^{n}=X\times (\P^1)^n
\setminus X \times \square^n$, which is a normal crossing divisor. Let
$E^*_{X\times (\P^1)^n }(\log D^{n})$ be the complex of
\emph{differential forms with logarithmic singularities along $D$}
\cite{Burgos3}.

In this paper we will denote
$$E_{\log}^*(X\times \square^n) := E^*_{X\times (\P^1)^n}(\log D^{n})$$
and we will call it
the complex of \emph{differential forms on $X\times \square^n$ with
  logarithmic singularities along infinity}.
Then, the Deligne complex associated to $E_{\log}^*(X\times
\square^n)$, denoted $\mmD_{\log}^{\ast}(X\times \square^n,p)$, computes the
Deligne-Beilinson cohomology of $X\times\square ^{n}$, which, by
homotopy invariance, agrees with the Deligne-Beilinson cohomology of
$X$.

\begin{obs}
  The Deligne complex of differential forms on $X\times \square^n$ with
  logarithmic
  singularities along infinity is usually defined by taking the limit
  over all compactifications of $X\times \square^n$. In this work,
  however, since $X$ is proper, we have a natural compactification of
  $X\times \square^n$. The
  two different cochain complexes obtained, using the limit or with a
  fixed compactification,  are quasi-isomorphic.
\end{obs}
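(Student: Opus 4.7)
The plan is to interpret the ``limit'' definition as a filtered colimit, exhibit the fixed compactification $(X\times (\P^{1})^{n},D^{n})$ as one object in that diagram, and then invoke the fact that every transition map is already a quasi-isomorphism. Write $Y=X\times \square^{n}$. The limit version of $E_{\log}^{\ast}(Y)$ is by definition $\varinjlim_{(\bar Y,D)} E^{\ast}_{\bar Y}(\log D)$, where the colimit runs over the category whose objects are smooth compactifications $\bar Y\supset Y$ with $D=\bar Y\setminus Y$ a normal crossing divisor, and whose morphisms are proper maps of pairs restricting to the identity on $Y$. Since $X$ is proper, $X\times (\P^{1})^{n}$ is smooth and proper and $D^{n}$ is a normal crossing divisor, so $(X\times(\P^{1})^{n},D^{n})$ is an object of this diagram; it determines a canonical comparison morphism from the fixed compactification version into the colimit, and the assertion is that this map, together with its analogue on Deligne complexes $\mmD^{\ast}(-,p)$, is a quasi-isomorphism.

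I would first verify the quasi-isomorphism on the level of the underlying cochain complexes. The indexing category is filtered: given any two smooth NCD compactifications, Hironaka's resolution of singularities applied to the closure of $Y$ diagonally embedded in their product produces a third one dominating both. For each morphism $(\bar Y_{1},D_{1})\to (\bar Y_{2},D_{2})$ in the diagram, the pullback
$$E^{\ast}_{\bar Y_{2}}(\log D_{2}) \longrightarrow E^{\ast}_{\bar Y_{1}}(\log D_{1})$$
is a quasi-isomorphism, since both sides compute $H^{\ast}(Y;\C)$ functorially in $Y$ by Deligne's theorem on logarithmic de Rham complexes. Because filtered colimits of abelian groups are exact, every transition being a quasi-isomorphism forces the canonical map from any single object of the diagram into the colimit to be a quasi-isomorphism; applied to $(X\times (\P^{1})^{n},D^{n})$ this gives the comparison on the underlying complexes.

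To upgrade this to the Deligne complexes, one needs the transition maps to respect the real structure, complex conjugation, and the Hodge filtration $F^{\bullet}$, so that they are morphisms of Dolbeault complexes in the sense of \cite{BurgosKuhnKramer}, $\S$5.2, and in addition to be strictly compatible with $F^{\bullet}$. Compatibility is automatic from functoriality of the logarithmic de Rham complex. The Deligne complex $\mmD^{\ast}(A,p)$ is constructed functorially out of $A_{\R}(p)$ and $F^{p}A$, and strictly filtered quasi-isomorphisms of Dolbeault complexes induce quasi-isomorphisms of their Deligne complexes; so once strictness is in hand, the conclusion transfers from $E_{\log}^{\ast}$ to $\mmD^{\ast}(E_{\log}^{\ast}(Y),p)$.

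The main obstacle is the strictness step: showing that the pullbacks above preserve the Hodge filtration strictly. This is not a formal consequence of being a quasi-isomorphism of complexes but rather reflects Deligne's $E_{1}$-degeneration of the Hodge-to-de-Rham spectral sequence for the smooth open variety $Y$, which ensures that the Hodge filtration on $H^{\ast}(Y;\C)$ is canonical and that every pullback between NCD compactifications is strict for $F^{\bullet}$. Everything else in the argument is formal category theory of filtered colimits.
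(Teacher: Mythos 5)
The paper offers no proof of this statement: it is a remark, and the quasi-isomorphism is quoted as a known fact from the theory of logarithmic Dolbeault complexes. Your argument is the standard one and is correct: you identify $(X\times(\P^1)^n,D^n)$ as an object of the (filtered, by Hironaka) system of smooth NCD compactifications, observe that every transition map is a filtered quasi-isomorphism because each term computes $H^{\ast}(X\times\square^n;\C)$ with its canonical Hodge filtration (this is the content of \cite{Burgos3} for the $C^\infty$ logarithmic complex, with $E_1$-degeneration supplying the compatibility of the filtrations), and then pass to Deligne complexes, for which a real filtered quasi-isomorphism of Dolbeault complexes induces a quasi-isomorphism (\cite{Burgos2}, \cite{BurgosKuhnKramer} \S 5.2). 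The only point worth tightening is terminology: what you need is that each transition map is a \emph{filtered} quasi-isomorphism, i.e.\ a quasi-isomorphism on each $F^p$ and on the real parts, which is exactly what $E_1$-degeneration gives; ``strictness'' of the induced maps on cohomology is a consequence rather than the hypothesis to be verified.
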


The description of the regulator uses some intermediate complexes that
we describe in the following.

\medskip
\textbf{The complex $\mmD_{\A}^{*}(X,p)_0$. } For every $n,p\geq 0$,
let $\tau\mathcal{D}_{\log}^*(X\times \square^n,p)$ be the Deligne
complex of differential forms in $X\times \square^n$, with logarithmic
singularities at infinity, truncated at degree $2p$:
\begin{displaymath}
  \tau\mathcal{D}_{\log}^*(X\times \square^n,p):=
  \tau_{ \le 2p} \mathcal{D}_{\log}^*(X\times \square^n,p).
\end{displaymath}
The structural
maps of the cocubical scheme $\square^{\cdot}$ induce a cubical
structure on $\tau\mathcal{D}_{\log}^r(X\times \square^*,p)$ for every
$r$ and $p$.  Consider the $2$-iterated cochain complex given by
$$\mathcal{D}_{\A}^{r,-n}(X,p)=\tau\mathcal{D}_{\log}^r(X\times \square^n,p)$$
and with differential $(d_{\mmD},\delta=\sum_{i=1}^n
(-1)^i(\delta_i^0-\delta_i^1))$.
Let
$$\mmD_{\A}^*(X,p)=s(\mathcal{D}_{\A}^{*,*}(X,p))$$
be the simple complex associated to the $2$-iterated complex
$\mathcal{D}_{\A}^{*,*}(X,p)$.

For every $r,n$, let $\tau\mmD_{\log}^r(X\times \square^n,p)_0=
N\tau\mmD_{\log}^r(X\times \square^n,p)$ be the normalized complex and let
$$\mmD_{\A}^{r,-n}(X,p)_0= \tau\mmD_{\log}^r(X\times \square^n,p)_0.$$
Denote by $(\mmD^*_{\A}(X,p)_0,d_s)$ the associated simple complex.

\medskip
\textbf{The complex $\mcH^p(X,*)_0$. }
Let $\mathcal{Z}^p_{n,X}$ be the set of all codimension $p$ closed
subvarieties of $X\times \square^n$ intersecting properly the
faces of $X\times \square^n$.
When there is no source of confusion, we
simply write $\mmZ^p_n$ or even $\mmZ^p$.
Write
\begin{displaymath}
  \mmD_{\log}^{*}(X\times \square^n\setminus \mathcal{Z}_n^p,p)=
  \lim_{\substack{\longrightarrow\\Z\in \mathcal{Z}_n^p}} \mmD_{\log}^{*}(X\times
  \square^n\setminus Z,p),
\end{displaymath}
where here logarithmic singularities at infinity are defined by taking the
limit over all possible compactifications.

Let $(\mmD_{\log,\mathcal{Z}_n^p}^{*}(X\times \square^n,p),d_{\mmD})$
be the Deligne complex with supports
\begin{displaymath}
  \mmD_{\log,\mathcal{Z}_n^p}^{*}(X\times \square^n,p)=
  s(\mmD_{\log}^{\ast}(X\times \square^n,p)\rightarrow
  \mmD_{\log}^{*}(X\times \square^n\setminus \mathcal{Z}_n^p,p)).
\end{displaymath}
The cohomology groups of this complex are denoted by
$H^{*}_{\mmD,\mathcal{Z}^p_n}(X\times \square^n,\R(p))$.

Consider the cubical
abelian group
\begin{equation}
\mcH^p(X,\cdot):=H^{2p}_{\mmD,\mathcal{Z}^p_\cdot}(X\times
\square^\cdot,\R(p)),
\end{equation}
with faces and degeneracies induced by those of $\square^{\cdot}$. Let
$\mcH^p(X,*)_0$ be the associated normalized complex.

\medskip
\textbf{The complex $\mmD^{*,*}_{\A,\mathcal{Z}^p}(X,p)_0$. } Let
$\mmD^{*,*}_{\A,\mathcal{Z}^p}(X,p)_0$ be the $2$-iterated cochain
complex, whose component of bidegree $(r,-n)$ is
$$\mmD^{r,-n}_{\A,\mathcal{Z}^p}(X,p)_0:=
\tau_{\leq 2p}\mmD_{\log,\mathcal{Z}_n^p}^{r}(X\times
\square^n,p)_0=N\tau_{\leq 2p}\mmD_{\log,\mathcal{Z}_n^p}^{r}(X\times
\square^n,p), $$ and whose
differentials are
 $(d_{\mmD},\delta)$. As usual,
we denote by $(\mmD^{*}_{\A,\mathcal{Z}^p}(X,p)_0,d_s)$ the associated
simple complex. Let $\mmD^{2p-*}_{\A,\mathcal{Z}^p}(X,p)_0$ be the chain complex
whose $n$-th graded piece is $\mmD^{2p-n}_{\A,\mathcal{Z}^p}(X,p)_0$.

The main properties of the above complexes are summarized in the
following result.

\begin{prop}[\cite{BFChow}]\label{difaffine}
\begin{enumerate}
\item \label{item:8} The natural morphism of complexes
$$\tau\mmD^*_{\log}(X,p)= \mmD_{\A}^{*,0}(X,p)_0 \rightarrow
\mmD_{\A}^{*}(X,p)_0$$
is a quasi-isomorphism.
\item \label{item:9} There is an isomorphism of chain complexes
$$
f_1: Z^p_c(X,*)_{0}\otimes \R  \xrightarrow{\cong}  \mcH^p(X,*)_0,
$$
sending every algebraic cycle $z$ to its class $\cl(z)$.
\item \label{item:10} For every $p\geq 0$, the morphism
\begin{eqnarray*}
\mmD^{2p-n}_{\A,\mathcal{Z}^p}(X,p)_0 & \xrightarrow{g_1}&
\mcH^{p}(X,n)_0 \\
((\omega_n,g_n),\dots,(\omega_0,g_0))  & \mapsto &
[(\omega_n,g_n)]
\end{eqnarray*}
defines a quasi-isomorphism of chain complexes.
\end{enumerate}
\end{prop}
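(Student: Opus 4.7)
The plan is to prove the three statements in order, using homotopy invariance of Deligne--Beilinson cohomology for (1), purity of Deligne cohomology with supports for (2), and a combined truncation / spectral sequence argument for (3). The main obstacle is the purity input in (2), together with the careful identification of top-degree cohomology with cycle classes; once this is available for the direct-limit Deligne cohomology with supports in the family $\mathcal{Z}_n^p$, parts (1) and (3) become formal consequences of spectral sequence bookkeeping.

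For (1), I would study the double complex $\mmD_{\A}^{*,*}(X,p)_0$ via the spectral sequence obtained by first taking cohomology in the cubical ($\delta$) direction. For each fixed $r \leq 2p$, the cubical abelian group $n \mapsto \mmD_{\log}^r(X\times\square^n,p)$ inherits its structural maps from the cocubical scheme $\square^{\cdot}$. Homotopy invariance of Deligne--Beilinson cohomology, implemented concretely via a contraction of $\A^n$ compatible with the logarithmic behaviour along $D^n$, together with a standard Eilenberg--Zilber type argument for cubical abelian groups, implies that the normalized chain complex in the cubical direction is acyclic in positive cubical degree and agrees with $\tau\mmD_{\log}^r(X,p)$ in cubical degree zero. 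Since truncation at $2p$ commutes with these operations, the spectral sequence collapses on $E_2$ and yields the desired quasi-isomorphism.

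For (2), the essential input is the purity theorem for Deligne cohomology with supports: if $Y$ is smooth and $Z \subset Y$ is an irreducible closed subvariety of codimension $p$, then $H^k_{\mmD, Z}(Y, \R(p))$ vanishes for $k < 2p$ and $H^{2p}_{\mmD, Z}(Y, \R(p)) \cong \R$ is generated by the cycle class $\cl(Z)$. Taking the direct limit over $Z \in \mathcal{Z}_n^p$ and summing over irreducible components yields a canonical isomorphism $H^{2p}_{\mmD, \mathcal{Z}_n^p}(X\times\square^n, \R(p)) \cong Z^p_c(X,n)\otimes \R$ sending an irreducible cycle to its Deligne class. The proper-intersection hypothesis ensures that pullbacks along the coface maps are well defined on cycles of $\mathcal{Z}^p$ and commute with the cycle class map, so the isomorphism respects the cubical structure; normalizing in the cubical variable gives $f_1$.

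For (3), the purity statement just invoked also implies that for each $n$ the complex $\tau_{\leq 2p}\mmD_{\log, \mathcal{Z}_n^p}^*(X\times\square^n, p)_0$ has $d_{\mmD}$-cohomology concentrated in degree $2p$ with value $\mcH^p(X,n)_0$. In the spectral sequence of the double complex $\mmD_{\A,\mathcal{Z}^p}^{*,*}(X,p)_0$ obtained by first taking $d_{\mmD}$-cohomology, the $E_1$-page is therefore concentrated in the row $r = 2p$ and identified with the chain complex $\mcH^p(X,*)_0$; the map $g_1$ is precisely the associated edge morphism, hence a quasi-isomorphism.
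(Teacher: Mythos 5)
The paper itself does not prove this proposition; it only cites \cite{BFChow} (Corollary 2.9, Lemma 2.11 and Proposition 2.13), so your proposal has to be measured against the arguments given there. Your sketches for parts (2) and (3) are essentially those arguments and are correct in outline: purity of Deligne cohomology with supports identifies $H^{2p}_{\mmD,\mathcal{Z}^p_n}(X\times\square^n,\R(p))$, after the filtered limit over $\mathcal{Z}^p_n$, with the free $\R$-module on codimension-$p$ admissible cycles via the cycle class; proper intersection with the faces makes this compatible with the cubical structure; and for (3) purity together with the truncation $\tau_{\leq 2p}$ forces the first ($d_{\mmD}$-direction) spectral sequence of $\mmD^{*,*}_{\A,\mathcal{Z}^p}(X,p)_0$ to be concentrated in the row $r=2p$, with $g_1$ the resulting edge morphism. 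You should still record that normalization is a direct-summand (hence exact) functor, so it commutes with taking $d_{\mmD}$-cohomology; otherwise the identification of the $E_1$-page with $\mcH^p(X,*)_0$ is not justified.

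Part (1), however, contains a genuine gap. You run the spectral sequence by first taking homology in the cubical direction and assert that, for fixed $r$, the normalized row $n\mapsto \tau\mmD^r_{\log}(X\times\square^n,p)_0$ is acyclic in positive degrees with $H_0=\tau\mmD^r_{\log}(X,p)$. The second half of this assertion is false at the level of forms: the degree-zero homology of that row is $\tau\mmD^r_{\log}(X,p)$ modulo the image of $-\delta_1^0$ on $\ker\delta_1^1$, and this image is nonzero in general (take any form on $X\times\square$ in the Deligne complex that vanishes on $X\times\{\infty\}$ but not on $X\times\{0\}$, e.g.\ a pullback from $X$ multiplied by a suitable function of the last coordinate). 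Homotopy invariance is a statement about $d_{\mmD}$-cohomology, not about the groups of logarithmic forms, and $\square=\P^1\setminus\{1\}$ with marked points $0$ and $\infty$ carries no evident multiplication or contraction furnishing an extra degeneracy at the form level, so the ``Eilenberg--Zilber type argument'' you invoke has nothing to act on. The argument in \cite{BFChow} takes the spectral sequence in the other order: by truncation, $E_1^{r,-n}=H^r_{\mmD}(X\times\square^n,\R(p))_0$ for $r\leq 2p$; homotopy invariance makes every face map $\delta_i^j$ an isomorphism on these groups (each equal to the inverse of the pullback along the projection), so the normalized part $\bigcap_i\ker\delta_i^1$ vanishes for $n\geq 1$; hence $E_1$ is concentrated in the column $n=0$, where the inclusion of that column is precisely the map in question.
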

\begin{proof}
  See \cite{BFChow}, Corollary 2.9, Lemma 2.11 and Proposition 2.13.
\end{proof}

\textbf{Definition of the regulator. }
Consider the map of iterated cochain complexes defined by the
projection onto the first factor
\begin{displaymath}
\begin{array}{rccc}
\mmD_{\A,\mathcal{Z}^p}^{r,-n}(X,p)=&\tau_{\le 2p}s^{r}\left(\mmD_{\log}^\ast(X\times
\square^n,p)\rightarrow \mathcal{D}_{\log}^{*}(X\times
\square^n\setminus \mmZ^p,p)\right) & \xrightarrow{\rho} &
\tau\mmD_{\log}^r(X\times \square^n,p)
\\ &(\alpha,g) & \mapsto & \alpha.
\end{array}
\end{displaymath}
It induces  a chain morphism
\begin{equation}\label{regulator4}
 \mmD_{\A,\mathcal{Z}^p}^{2p-*}(X,p)_0 \xrightarrow{\rho}
\mmD_{\A}^{2p-*}(X,p)_0.\end{equation} The morphism induced by
$\rho$ in homology, together with the isomorphisms of Proposition
\ref{difaffine}
 induce a morphism
\begin{equation}\label{regulator}\rho: CH^p(X,n) \rightarrow
  CH^p(X,n)_{\R} \rightarrow
H_{\mmD}^{2p-n}(X,\R(p)).\end{equation} By abuse of notation, all
these morphisms are denoted by $\rho$.

Observe that in the derived category of chain complexes, the morphism
$\rho$ is given by
the composition
$$
\xymatrix{
Z^{p}_c(X,*)_{0} \ar[r]^{f_1} & \mcH^p(X,*)_0 &  \ar[l]_{g_1}^{\sim}
\mmD^{2p-*}_{\A,\mathcal{Z}^p}(X,p)_0 \ar[r]^{\rho} & \mmD_{\A}^{2p-*}(X,p)_0. }
$$

The following result follows directly from the definitions.
\begin{lemma}\label{lemm:1}
Let $z\in
CH^p(X,n)$, then
$$\rho(z) =  [(\omega _n,\dots,\omega _0)],$$
for any cycle $((\omega _n,g_n),\dots,(\omega _0,g_0))\in
\mmD_{\A,\mathcal{Z}^p}^{2p-n}(X,p)_0$  such that
$[(\omega _n,g_n)]=\cl(z)$.
\ \hfill $\square$
\end{lemma}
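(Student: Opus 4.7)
The plan is to unwind the definition of $\rho$ step by step and check that the candidate representative indicated in the statement is precisely what one obtains from the composition. Recall from the paragraph preceding the lemma that, in the derived category of chain complexes, the regulator $\rho$ is realized by the zig-zag
\begin{displaymath}
Z^{p}_c(X,*)_{0} \xrightarrow{f_1} \mcH^p(X,*)_0 \xleftarrow{g_1} \mmD^{2p-*}_{\A,\mathcal{Z}^p}(X,p)_0 \xrightarrow{\rho} \mmD_{\A}^{2p-*}(X,p)_0,
\end{displaymath}
where $g_1$ is a quasi-isomorphism by Proposition~\ref{difaffine}\eqref{item:10}, and $\rho$ on chain level is projection onto the first component $(\alpha,g)\mapsto \alpha$.

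I would first start with a cycle representative $z\in Z^p_c(X,n)_0$ of the class $z\in CH^p(X,n)$. By Proposition~\ref{difaffine}\eqref{item:9}, $f_1(z)=\cl(z)\in \mcH^p(X,n)_0$, and this is a cycle in $\mcH^p(X,*)_0$. Next, since $g_1$ is a quasi-isomorphism, the homology class of $\cl(z)$ in $H_n(\mcH^p(X,*)_0)$ lifts to a class in $H_n(\mmD^{2p-*}_{\A,\mathcal{Z}^p}(X,p)_0)$. The hypothesis on the chain $((\omega_n,g_n),\ldots,(\omega_0,g_0))$ is exactly the assertion that it is a cycle and that $g_1$ sends it to $[(\omega_n,g_n)]=\cl(z)$ in $\mcH^p(X,n)_0$; hence it represents this lift at the chain level.

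Finally, I apply the projection $\rho$ to this lift, obtaining $(\omega_n,\ldots,\omega_0)\in \mmD_{\A}^{2p-n}(X,p)_0$, which is automatically a cycle because $\rho$ is a chain map and the lift is a cycle. Passing to homology and composing with the identification $H^{2p-n}(\mmD_{\A}^*(X,p)_0)\cong H_{\mmD}^{2p-n}(X,\R(p))$ supplied by Proposition~\ref{difaffine}\eqref{item:8} delivers the equality $\rho(z)=[(\omega_n,\ldots,\omega_0)]$.

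There is essentially no obstacle: the content of the lemma is entirely bookkeeping, and the only non-formal input is the existence of a cycle lifting $\cl(z)$ through $g_1$, which is guaranteed by the quasi-isomorphism of Proposition~\ref{difaffine}\eqref{item:10}. The lemma just records that any such lift works, independently of the choice, since any two lifts differ by a boundary in $\mmD^{2p-*}_{\A,\mathcal{Z}^p}(X,p)_0$ and $\rho$ sends boundaries to boundaries.
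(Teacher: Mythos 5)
Your proof is correct and follows exactly the route the paper intends: the paper marks this lemma as following directly from the definitions, namely from the zig-zag description of $\rho$ in the derived category together with Proposition~\ref{difaffine}, which is precisely the bookkeeping you carry out. The only content beyond unwinding definitions is the observation that the hypothesis $[(\omega_n,g_n)]=\cl(z)$ says the given cycle is a chain-level lift of $f_1(z)$ through the quasi-isomorphism $g_1$, and that any such lift gives the same homology class after projection; you state both points correctly.
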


\begin{thm}[\cite{BFChow}, Thm. 3.5]\label{beichow} Let $X$ be an
  equidimensional complex algebraic manifold.
Let $\rho'$ be the composition of $\rho$ with the isomorphism
given by the Chern character of \cite{Bloch1}
$$\rho': K_n(X)_{\Q} \xrightarrow{\cong} \bigoplus_{p\geq 0}
CH^{p}(X,n)_{\Q} \xrightarrow{\rho}
 \bigoplus_{p\geq 0}H^{2p-n}_{\mmD}(X,\R(p)).$$
Then, the morphism $\rho'$ agrees with the Beilinson regulator.
\hfill $\square$
\end{thm}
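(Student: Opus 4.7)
The plan is to pin down $\rho'$ by the axiomatic characterization of Beilinson's regulator. Following Gillet's framework, it suffices to exhibit $\rho$ as a natural, multiplicative transformation from algebraic $K$-theory to Deligne-Beilinson cohomology whose restriction to $K_0$ recovers the classical cycle class map: over $\Q$, any such transformation must agree with Beilinson's regulator, up to the Adams/Chern-character normalisation supplied by the Bloch isomorphism $K_n(X)_{\Q}\cong \bigoplus_p CH^p(X,n)_{\Q}$.

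First I would establish naturality of $\rho$ under pullback by morphisms $f\colon X\to Y$ of smooth proper complex varieties. Since each intermediate complex $\mmD^{\ast}_{\A}(-,p)_0$, $\mmD^{\ast}_{\A,\mmZ^p}(-,p)_0$, $\mcH^p(-,\ast)_0$ is built functorially from the log-Deligne complex of $X\times \square^{\ast}$ and its cycle-supported variants, naturality of $\rho$ reduces to the (classical) functoriality of the cycle class map with supports, together with the naturality of $f_1$ sending a cycle to its Deligne class. Second I would verify multiplicativity. The product on $\mmD^{\ast}_{\log}$ is graded-commutative and associative only up to an explicit homotopy, while the product on $\bigoplus_{p,n}CH^p(X,n)$ arises from external product of admissible cycles followed by pullback along the diagonal; tracing the external product through the factorisation
\[
Z^{p}_c(X,*)_{0} \xrightarrow{f_1} \mcH^p(X,*)_0 \xleftarrow{g_1} \mmD^{2p-\ast}_{\A,\mmZ^p}(X,p)_0 \xrightarrow{\rho}\mmD^{2p-\ast}_{\A}(X,p)_0
\]
one checks that $\rho$ is a morphism of homotopy commutative rings. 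Third, for $n=0$, Lemma \ref{lemm:1} identifies $\rho(z)$ with the usual Deligne cycle class of $z\in CH^p(X)$, which is the correct normalisation on $K_0$.

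The main obstacle is multiplicativity, because the product on $\mmD^{\ast}_{\log}$ is only homotopy-associative: one has to produce a coherent system of chain homotopies witnessing that $\rho$ is a ring map in the derived category, and one has to reconcile the cubical, cycle-theoretic product with Burgos' explicit formula on the Deligne side. The reconciliation is done by moving lemmas (replacing cycles by ones meeting all faces properly) and by an explicit chain-level computation of the external product in $\mcH^p$ and in $\mmD^{\ast}_{\A,\mmZ^p}$. Once this is established, Gillet's uniqueness theorem for Chern classes in a twisted duality theory (applied after splitting $K$-theory into Adams eigenspaces via the Bloch-Chern character) forces $\rho'$ to coincide with Beilinson's regulator. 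A variant that bypasses the multiplicativity bookkeeping is to compare $\rho$ directly with Bloch's higher cycle class map into Deligne cohomology, already known to induce Beilinson's regulator, and to exhibit an explicit chain homotopy between the two at the level of the simple complexes constructed above.
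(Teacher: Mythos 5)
First, note that the paper does not prove this statement at all: it is imported verbatim from \cite{BFChow}, Thm.~3.5, and stated with a terminal $\square$; the present article uses it purely as a black box (it is the anchor that transfers ``agrees with Beilinson's regulator'' from $\rho$ to $\mmP_c$ and then to $\mmP_s$). So your attempt must be judged as a reconstruction of the proof in \cite{BFChow}, and as such it has a genuine gap at its central step, the uniqueness claim. Naturality on smooth varieties, multiplicativity, and agreement with the cycle class map on $K_0$ do \emph{not} characterize Beilinson's regulator among transformations $K_n(-)_{\Q}\to\bigoplus_p H^{2p-n}_{\mmD}(-,\R(p))$. Multiplicativity cannot propagate information from $K_0$ to $K_n$ for $n>0$: the product of a class in $K_n$ with a class in $K_0$ stays in $K_n$, and elements of $K_n(X)$ are not generated by products of lower-degree classes, so a transformation can in principle be perturbed in positive degrees while remaining natural and multiplicative. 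The uniqueness theorem you actually want (Gillet's) classifies transformations that are \emph{induced by maps of simplicial presheaves or spectra}, via universal classes in the Deligne cohomology of the simplicial classifying scheme $B_{\cdot}GL_N$; since $K_n$ for $n>0$ is not representable by a scheme, naturality on the category of schemes gives no access to that classification. To invoke Gillet's theorem one must first lift $\rho$ --- which is defined here only as a zig-zag of chain maps $Z^p_c(X,*)_0\to \mcH^p(X,*)_0\leftarrow \mmD^{2p-*}_{\A,\mathcal{Z}^p}(X,p)_0\to \mmD^{2p-*}_{\A}(X,p)_0$ on a fixed $X$ --- to the universal simplicial level and compute its effect on the universal Chern classes, identifying the Bloch isomorphism with the Chern character there. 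That lift and that computation are where essentially all of the work lies, and your outline does not address them.

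Two further points. The chain-level multiplicativity you propose to establish (compatibility of $\rho$ with the cubical cycle product and the homotopy-associative $\bullet$-product, via moving lemmas) is a substantial theorem in its own right and, as explained above, does not buy you the uniqueness you need; you are paying a large cost for a step that then fails to close the argument. And your fallback route --- ``compare $\rho$ with Bloch's higher cycle class map, already known to induce Beilinson's regulator'' --- presupposes a comparison theorem of exactly the type being proved; unless you specify which chain-level model of that cycle class map you mean and exhibit the claimed homotopy, it is circular. The correct normalization input is the one you identify (Lemma \ref{lemm:1} in degree $n=0$), but by itself it only fixes the transformation on $K_0$.
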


We consider now the diagram of chain complexes
 {\small $$
C_{\ast}=\left(\begin{array}{c}\xymatrix@C=0.5pt{
 \mcH^p(X,*)_0 & & \mmD_{\A}^{2p-*}(X,p)_0 \\
 & \ar[ul]_{g_1}^{\sim}
\mmD^{2p-*}_{\A,\mathcal{Z}^p}(X,p)_0 \ar[ur]^{\rho} }
\end{array}\right). $$ }
We denote
\begin{equation}
  \label{eq:18}
  \mathcal{D}_{\A,\mathcal{H}}^{2p-\ast}(X,p)_{0}=s(C)[-1]_*
\end{equation}
the simple complex associated to the above diagram as in \cite{BFChow} $\S$1.2, shifted by
 minus one. That is, an
element of $\mathcal{D}_{\A,\mathcal{H}}^{2p-n}(X,p)_{0}$ is a triple
$(\alpha _{1},\alpha _{2},\alpha _{3})$ with $\alpha _{1}\in \mmD^{2p-n-1}_{\A,\mathcal{Z}^p}(X,p)_0$, $\alpha _{2}\in \mcH^p(X,n)_0$ and $\alpha _{3}\in
\mmD_{\A}^{2p-n}(X,p)_0$, and the differential is given by
\begin{displaymath}
  d(\alpha _{1},\alpha _{2},\alpha _{3})=(-d\alpha _{1},d\alpha _{2}+g_{1}(\alpha
  _{1}),d\alpha _{3}-\rho (\alpha _{1})).
\end{displaymath}
There is a quasi-isomorphism
\begin{displaymath}
  \beta : \mmD_{\A}^{2p-*}(X,p)_0 \xrightarrow{\sim}
  \mathcal{D}_{\A,\mathcal{H}}^{2p-\ast}(X,p)_{0}
\end{displaymath}
given by $\beta (\alpha )=(0,0,\alpha )$. We identify
\begin{displaymath}
  H_{\ast}(\mathcal{D}_{\A,\mathcal{H}}^{2p-\ast}(X,p)_{0}) =
  H_{\mathcal{D}}^{2p-\ast}(X,\R(p))
\end{displaymath}
by means of this quasi-isomorphism. Then Beilinson's regulator for
higher Chow groups is given by the morphism, also denoted $\rho $,
\begin{displaymath}
  \rho :Z^{p}_c(X,*)_{0}\longrightarrow
  \mathcal{D}_{\A,\mathcal{H}}^{2p-\ast}(X,p)_{0}
\end{displaymath}
given by $\rho (z)=(f_{1}(z),0,0)$.

\section{Goncharov's forms and Wang's forms}

\subsection{The differential form $\mathbf{T_m}$}
Let $A^{\ast}$ be a Dolbeault algebra and let $\mathcal{D}^{\ast}(A,\ast)$ be the
associated Deligne algebra as given in \cite{BurgosKuhnKramer}.
Let $u_{1},\dots,u_{m}\in
\mathcal{D}^{1}(A,1)$. Following \cite{Wang}, for $i=1,\dots,m$, we
write
{\small \begin{equation} \label{eq:12}
  S_{m}^{i}(u_{1},\dots,u_{m})=(-2)^{m}\sum_{\sigma \in \mathfrak S_m}
(-1)^{|\sigma|}u_{\sigma (1)}\partial u_{\sigma (2)}\land\dots\land
\partial u_{\sigma (i)}\land \bar \partial u_{\sigma
  (i+1)}\land\dots\land \bar \partial u_{\sigma({m})},
\end{equation}}
and
\begin{equation} \label{eq:13}
  T_{m}(u_{1},\dots,u_{m})=
  \frac{1}{2m!}\sum_{i=1}^{m}(-1)^{i}S_{m}^{i}(u_{1},\dots u_{m}).
\end{equation}
We will also write $T_{0}=1$.
The forms  $T_{m}(u_{1},\dots,u_{m})$ will be called Wang's forms.
\begin{prop}\label{prop:pd}
  \begin{enumerate}
  \item \label{item:1}The form $ T_{m}(u_{1},\dots,u_{m})$ belongs to
    $\mathcal{D}^{m}(X,m)$.
  \item \label{item:2}It holds
    \begin{displaymath}
      T_{m}(u_{1},\dots,u_{m})=\frac{1}{m!}\sum_{\sigma \in
        \mathfrak S_m} (-1)^{|\sigma|}u_{\sigma (1)}\bullet (
      \dots \bullet (u_{\sigma (m-1)}\bullet u_{\sigma (m)})).
    \end{displaymath}
  \item \label{item:3}There is a recursive formula
    \begin{equation}\label{eq:3}
      d_{\mathcal{D}}T_{m}(u_{1},\dots,u_{m})=
      \sum_{i=1}^{m}(-1)^{i-1}d_{\mathcal{D}}u_{i}\bullet T_{m-1}(u_{1},\dots,
      \widehat {u_{i}},\dots ,u_{m}).
    \end{equation}
  \end{enumerate}
\end{prop}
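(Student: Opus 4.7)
I would prove the three items in the order (2), (1), (3), since once the explicit $\bullet$-product description is in hand the other two fall out easily.

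For (2), I proceed by induction on $m$. The base case $m=1$ is a direct unwinding of the definitions: $T_{1}(u_{1})=\tfrac{1}{2}(-1)^{1}S_{1}^{1}(u_{1})$, and the products $\partial u_{\sigma(2)}\wedge\cdots\wedge\bar\partial u_{\sigma(1)}$ are all empty, so $S_{1}^{1}(u_{1})=(-2)u_{1}$ and $T_{1}(u_{1})=u_{1}$, which matches the right-hand side. For the inductive step I would invoke the explicit formula for the $\bullet$-product from \cite{BurgosKuhnKramer}, $\S5.2$, applied to $u\in\mathcal{D}^{1}(A,1)$ and $\omega\in\mathcal{D}^{m-1}(A,m-1)$; because every factor lives on the diagonal subalgebra $\bigoplus_{p}\mathcal{D}^{p}(A,p)$, the $\bullet$-product is \emph{strictly} associative there, so the right-associated iterated product in the statement is unambiguous. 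Expanding $u_{\sigma(1)}\bullet(u_{\sigma(2)}\bullet(\cdots\bullet u_{\sigma(m)}))$ using that formula produces, at each step, a term that inserts either a $\partial$ or a $\bar\partial$ on one of the remaining factors; collecting the resulting monomials by the position $i$ at which the $\partial$-string switches to a $\bar\partial$-string, and using $(-2)^{m}$ and the sign $(-1)^{i}$ that appear in \eqref{eq:12}--\eqref{eq:13}, recovers $T_{m}(u_{1},\dots,u_{m})$.

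For (1), once (2) is established, the claim is immediate from the fact that $\bullet$ sends $\mathcal{D}^{a}(A,p)\otimes\mathcal{D}^{b}(A,q)$ to $\mathcal{D}^{a+b}(A,p+q)$; iterating with all factors in $\mathcal{D}^{1}(A,1)$ lands in $\mathcal{D}^{m}(A,m)$, and the $\R$-linear combination in (2) stays there.

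For (3), I apply $d_{\mathcal{D}}$ to the formula in (2). The Leibniz rule for $\bullet$ (strict, since we are in the diagonal subalgebra; if one prefers, one can equivalently use the graded Leibniz rule up to homotopy and observe that the homotopies cancel after antisymmetrization) yields
\begin{equation*}
d_{\mathcal{D}}\bigl(u_{\sigma(1)}\bullet(\cdots\bullet u_{\sigma(m)})\bigr)=\sum_{j=1}^{m}(-1)^{j-1}u_{\sigma(1)}\bullet\bigl(\cdots\bullet d_{\mathcal{D}}u_{\sigma(j)}\bullet\cdots\bullet u_{\sigma(m)}\bigr),
\end{equation*}
since each $u_{\sigma(i)}$ has degree $1$. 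Summing over $\sigma\in\mathfrak S_{m}$ with weight $(-1)^{|\sigma|}$ and reindexing the sum so that $d_{\mathcal{D}}u_{i}$ is extracted to the front — using the antisymmetry of the remaining $(m-1)$ factors to move the differentiated term into the first slot at the cost of a sign $(-1)^{i-1}$ — the $(m-1)!$ permutations of the surviving indices reassemble into $T_{m-1}(u_{1},\dots,\widehat{u_{i}},\dots,u_{m})$, giving \eqref{eq:3}.

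The main obstacle is the combinatorial/sign check in the inductive step of (2): the $\bullet$-product on $\mathcal{D}^{\ast}(A,\ast)$ is piecewise-defined through the Hodge-type projections, and one must verify that the successive insertions of $\partial$ and $\bar\partial$ produced by the right-associated product, together with the normalisation $(-2)^{m}$ and the factor $\tfrac{1}{2m!}$, reproduce exactly the telescoping sum indexed by $i=1,\dots,m$ in \eqref{eq:13}. Once that identification is made, parts (1) and (3) are short formal consequences.
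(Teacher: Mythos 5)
Your architecture inverts the paper's. The authors remark that (2) formally implies (1) and, via the Leibniz rule, (3) --- so that part of your plan is endorsed --- but they then deliberately prove (3) \emph{first}, directly from the definition of $T_m$ through the $S_m^i$, using Takeda's formulas \eqref{eq:2}--\eqref{eq:4} for $\partial S_m^i$ and $\bar\partial S_m^i$, and only afterwards deduce (2). The reason is that the direct inductive expansion of the right-associated $\bullet$-product, which is the only non-formal step in your proposal, is exactly the computation you defer as ``the main obstacle'': in the product formula the term $u\land r_{m-1}(\omega)$ retains only the extreme Hodge components $(m-1,0)$ and $(0,m-1)$ of $d\omega$, so tracking which monomials survive each insertion, with which signs and powers of $2$, \emph{is} the content of the statement, and your proposal does not carry it out. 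The paper's argument is built to avoid this expansion: with (3) in hand, it writes $C_m:=$ RHS of (2) as $\sum_i\alpha_{i,m}S_m^i$ (possible by $\mathfrak S_m$-invariance), notes that $C_m$ satisfies the same recursion so that $d_{\mathcal D}C_m$ contains no term $(u_1,\dots,u_m)^{(i)}$, which by \eqref{eq:2} and \eqref{eq:4} forces $\alpha_{i,m}=-\alpha_{i-1,m}$, and finally fixes $\alpha_{1,m}=-1/(2m!)$ by comparing $\bar\partial\bar F^{m-1}C_m$ with $\bar\partial S_m^1$. If you want to keep your order of proof, you must either do the full combinatorial verification or import a device like this coefficient argument.

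There is also a concrete error: the $\bullet$-product is \emph{not} strictly associative on the diagonal $\bigoplus_p\mathcal{D}^p(A,p)$. The paper states that $\bullet$ is associative only up to homotopy, and already for three elements of $\mathcal{D}^1(A,1)$ the two bracketings differ (the extreme Hodge components of $r_2(u\bullet v)\land w$ and of $r_1(u)\land r_1(v)\land w$ do not agree); this is precisely why the statement of (2) carries an explicit right-associated parenthesization. The false claim is harmless for your induction in (2), since you always peel off the leftmost factor, but it undermines your derivation of (3), where you re-associate to pull $d_{\mathcal D}u_{\sigma(j)}$ out of the middle of an iterated product. The correct justification there is neither ``associativity on the diagonal'' nor ``homotopies cancelling after antisymmetrization'' (the Leibniz rule for $\bullet$ is strict; it is associativity that fails): one should use that $d_{\mathcal D}u_i$ lands in $\mathcal{D}^2(A,1)$, i.e.\ in the range $n\ge 2p$ where the product with such an element reduces to a wedge followed by a projection, so the commutations and re-associations involving the factors $d_{\mathcal D}u_i$ do hold on the nose.
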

\begin{proof}
  It is clear that \ref{item:2} implies \ref{item:1} and, by the
  Leibniz rule, also \ref{item:3}. Nevertheless we will prove first
  \ref{item:3} and use it to prove \ref{item:2}. For this we will
  follow \cite{Takeda} $\S$5.2, but note that our $S_{m}^{i}$ is
  $(-2)^{m}$ times the form denoted $S_{m}^{i}$ there. Given elements
  $u_{1},\dots,u_{n}\in \mathcal{D}^{1}(A,1)(=A^{0}_{\R}(0))$, we
  will denote by $(u_{1},\dots,u_n)^{(i)}$ the piece of bidegree
  $(i,n-i)$ of $du_{1}\land\dots \land du_{n}\in A^{n}$. Then
  \begin{displaymath}
    S^{i}_{m}(u_{1},\dots,u_m)=(-2)^{m}(i-1)!(m-i)!
    \sum_{j=1}^{m}(-1)^{j+1}u_{j}(u_{1},\dots,\widehat{u_{j}},
    \dots,u_{m})^{(i-1)}.
  \end{displaymath}
  It is proved in \cite{Takeda}, Lemma 5.3  that
  \begin{multline}
    \label{eq:2}
    \partial
    S^{i}_{m}(u_{1},\dots,u_m)=(-2)^{m}i!(m-i)!(u_{1},\dots,u_{m})^{(i)}\\
    +(m-i)\sum_{j=1}^{m}
    (-1)^{j}(-2\partial\bar \partial)u_{j}\land
    S^{i}_{m-1}(u_{1},\dots,\widehat{u_{j}},
    \dots,u_{m})
  \end{multline}
 and
  \begin{multline}
    \label{eq:4}
    \bar \partial
    S^{i}_{m}(u_{1},\dots,u_m)=(-2)^{m}(i-1)!(m-i+1)!(u_{1},
    \dots,u_{m})^{(i-1)}\\
    -(i-1)\sum_{j=1}^{m}
    (-1)^{j}(-2\partial\bar \partial)u_{j}\land
    S^{i-1}_{m-1}(u_{1},\dots,\widehat{u_{j}},
    \dots,u_{m}).
  \end{multline}
  Then, using the definition of the differential in the Deligne complex,
  \begin{align*}
    d_{\mathcal{D}}T_{m}(u_{1},\dots,u_m)&=
    \frac{1}{2m!}\sum_{i=1}^{m-1}(-1)^{i+1}\partial
    S^{i}_{m}(u_{1},\dots,u_m) +
    \frac{1}{2m!}\sum_{i=2}^{m}(-1)^{i+1}\bar \partial
    S^{i}_{m}(u_{1},\dots,u_m) \\
    &=\frac{(-2)^{m}}{2m!}\sum_{i=1}^{m-1}(-1)^{i+1}i!(m-i)!
      (u_{1},\dots,u_{m})^{(i)}\\
      &\phantom{AA}+\frac{(-2)^{m}}{2m!}\sum_{i=2}^{m}(-1)^{i+1}(i-1)!(m-i+1)!
      (u_{1},\dots,u_{m})^{(i-1)}\\
      &\phantom{AA}+\frac{1}{2m!}\sum_{i=1}^{m-1}(-1)^{i+1}(m-i)
      \sum_{j=1}^{m}(-1)^{j}(-2\partial\bar \partial)u_{j}\land
      S^{i}_{m-1}(u_{1},\dots,\widehat{u_{j}}
    \dots,u_{m}) \\
     &\phantom{AA}-\frac{1}{2m!}\sum_{i=2}^{m}(-1)^{i+1}(i-1)
      \sum_{j=1}^{m}(-1)^{j}(-2\partial\bar \partial)u_{j}\land
      S^{i-1}_{m-1}(u_{1},\dots,\widehat{u_{j}}
    \dots,u_{m})
    \\
    &=\frac{1}{2m!}\sum_{j=1}^{m}(-1)^{j-1}d_{\mathcal{D}}u_{j}\land
    \left(\sum_{i=1}^{m-1}(-1)^{i}(m-i+i)S^{i}_{m-1}(u_{1},\dots,\widehat{u_{j}}
    \dots,u_{m})
    \right)\\
    &=\sum_{j=1}^{m}(-1)^{j-1}d_{\mathcal{D}}u_{j}\bullet T_{m-1}(u_{1},\dots,\widehat{u_{j}}
    \dots,u_{m}).
  \end{align*}
  In order to prove \ref{item:2}, write temporarily
  \begin{displaymath}
      C_{m}(u_{1},\dots,u_{m})=\frac{1}{m!}\sum_{\sigma \in
        \mathfrak S_m} (-1)^{|\sigma|}u_{\sigma (1)}\bullet (
      \dots \bullet (u_{\sigma (m-1)}\bullet u_{\sigma (m)})).
    \end{displaymath}
    Then, it is easy to show by induction that the forms $C_{m}$ are a
    linear combination of monomials of the form $u_{\sigma
      (1)}\partial u_{\sigma (2)}\land\dots\land \partial u_{\sigma
      (i)}\land \bar \partial u_{\sigma (i+1)}\land\dots\land \bar
    \partial u_{\sigma (m)}$, for suitable integers $i$ and
    permutations $\sigma $. Since they are invariant under the action of
    the symmetric group they are a linear combination of the forms
    $S^{i}_{m}$. Say
    \begin{displaymath}
      C_{m}(u_{1},\dots,u_{m})=\sum_{i=1}^{m}\alpha
      _{i,m}S^{i}_{m}(u_{1},\dots,u_{m}).
    \end{displaymath}
    By the Leibniz rule the forms $C_{m}$ satisfy the relation
    (\ref{eq:3}). In particular, $d_{\mathcal{D}}C_{m}$ does not contain
    any term of the form $(u_{1},\dots,u_{m})^{(i)}$. By (\ref{eq:2})
    and (\ref{eq:4}) this implies that $\alpha _{i,m}=-\alpha
    _{i-1,m}$. Thus, we have to show that $\alpha
    _{1,m}=-1/(2m!)$.
    Given a differential form $\omega $ of degree $n$, we will denote
    \begin{displaymath}
      \bar
    F^{p}\omega  =\sum_{q\ge p}\omega  ^{(n-q,q)}.
    \end{displaymath}
    Since $F^{m-1}C_{m}=\alpha _{1,m}S^{1}_{m}$, to determine $\alpha
    _{1,m}$ we can compare $\bar \partial S^{1}_{m}$ with
    $\bar \partial \bar F^{m-1}C_{m}$. On the one hand
    \begin{displaymath}
      \bar \partial  S^{1}_{m}=(-2)^{m}m!(u_{1},
    \dots,u_{m})^{(i-1)}.
    \end{displaymath}
    On the other hand, since for $a\in \mathcal{D}^{p}(A,p)$
    and $b\in \mathcal{D}^{q}(A,q)$, it holds
    \begin{displaymath}
      \bar \partial F^{p+q-1}(a\bullet b)=-2\bar \partial\bar
      F^{p-1}a\land\bar \partial\bar F^{q-1}b,
    \end{displaymath}
    we obtain
    \begin{displaymath}
      \bar \partial F^{m+1}C_{m}(u_{1},\dots,u_{m})=
      (-2)^{m-1}(u_{1},
    \dots,u_{m})^{(i-1)}.
    \end{displaymath}
    Therefore $\alpha _{1,m}=-1/(2m!)$, which concludes the proof of
    the proposition.
\end{proof}
Given the inclusion, for $m\ge 1$,  $\mathcal{D}^{m}(A,m)\subset
A^{m-1}_{\R}(m-1)$, we can view $T_{m}(u_{1},\dots,u_{m})$ as an
element of $A^{m-1}_{\R}(m-1)$.
By the same techniques as in the proof of the previous proposition one can prove
\begin{prop}\label{prop:1} For $m>1$,
 the following equation holds
  \begin{multline*}
    d T_{m}(u_{1},\dots,u_{m})= 2^{m-1}\big((u_{1},\dots,u_{m})^{(m)}+
    (-1)^{m-1}(u_{1},\dots,u_{m})^{(0)}\big)\\
    +2 \sum_{i=1}^{m}(-1)^{i-1}\partial\bar \partial u_{i}\land T(u_{1},\dots,
      \widehat {u_{i}},\dots ,u_{m}).
  \end{multline*}
  For $m=1$, the following equation holds
  \begin{equation*}
    d T_{1}(u_{1})=d u_{1}=\partial u_{1}+\bar \partial u_{1}
    =(u_{1})^{(1)}+(u_{1})^{(0)}.
  \end{equation*}
\ \hfill $\square$
\end{prop}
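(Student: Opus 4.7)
The plan is to mimic the proof of Proposition \ref{prop:pd}\ref{item:3}, replacing the Deligne differential $d_{\mathcal{D}}$ by the full de Rham differential $d=\partial+\bar\partial$. For $m\ge 2$, I would start from
$$dT_m(u_1,\ldots,u_m)=\frac{1}{2m!}\sum_{i=1}^{m}(-1)^{i}\bigl(\partial S_m^i(u_1,\ldots,u_m)+\bar\partial S_m^i(u_1,\ldots,u_m)\bigr)$$
and substitute the explicit formulas \eqref{eq:2} and \eqref{eq:4} used in the proof of Proposition \ref{prop:pd}. The only difference with that earlier computation is that here the index $i$ runs over the full range $1,\ldots,m$; in the derivation of $d_{\mathcal{D}}T_m$ the extremal summands $\partial S_m^m$ and $\bar\partial S_m^1$ were killed by the projection onto the Deligne complex, and it is precisely these missing terms that will produce the first line of the statement.

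After substitution, the resulting expression splits into two families. The \emph{Hodge-bidegree} terms proportional to $(u_1,\ldots,u_m)^{(i)}$ arising from $\partial S_m^i$ and from $\bar\partial S_m^{i+1}$ telescope in pairs, and all indices with $1\le i\le m-1$ cancel; only the endpoint contributions $i=m$ (from $\partial S_m^m$) and $i=0$ (from $\bar\partial S_m^1$) survive. A short simplification of the constants $(-2)^m i!(m-i)!/(2m!)$ at these endpoints yields exactly $2^{m-1}\bigl((u_1,\ldots,u_m)^{(m)}+(-1)^{m-1}(u_1,\ldots,u_m)^{(0)}\bigr)$. The \emph{inductive} terms of the form $\partial\bar\partial u_j\land S_{m-1}^i(u_1,\ldots,\widehat{u_j},\ldots,u_m)$ behave identically as in the proof of Proposition \ref{prop:pd}\ref{item:3}, because the extremal pieces $S_{m-1}^{0}$ and $S_{m-1}^m$ vanish automatically; the same re-indexing that produced the coefficient $m=(m-i)+i$ there identifies their total contribution as $2\sum_{j=1}^{m}(-1)^{j-1}\partial\bar\partial u_j\land T_{m-1}(u_1,\ldots,\widehat{u_j},\ldots,u_m)$, which is the second summand in the statement.

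The case $m=1$ is immediate: unwinding the definitions gives $T_1(u_1)=-\tfrac{1}{2}S_1^1(u_1)=u_1$, and consequently $dT_1(u_1)=du_1=\partial u_1+\bar\partial u_1=(u_1)^{(1)}+(u_1)^{(0)}$. The main difficulty I foresee is purely combinatorial bookkeeping: one must track carefully the interplay between the $(-1)^i$ coming from the definition of $T_m$ and the factorials and powers of $-2$ appearing in \eqref{eq:2} and \eqref{eq:4}, so that the middle Hodge-bidegree terms cancel cleanly and the surviving endpoint contributions emerge with the correct coefficient $2^{m-1}$ and sign $(-1)^{m-1}$.
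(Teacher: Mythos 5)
Your proposal is correct and is exactly the route the paper intends: the paper's ``proof'' consists solely of the remark that the identity follows ``by the same techniques as in the proof of the previous proposition,'' and your plan of rerunning that computation with the full de Rham differential $d=\partial+\bar\partial$ (so that the endpoint terms $\partial S_m^m$ and $\bar\partial S_m^1$, killed by the projection in the Deligne differential, now survive and produce the $2^{m-1}$-terms, while the $\partial\bar\partial u_j$-terms recombine via $(m-i)+i=m$ into $2\sum_j(-1)^{j-1}\partial\bar\partial u_j\wedge T_{m-1}$) checks out, including the constants and signs. The only point worth watching is that the Deligne differential also carries an overall sign relative to $\pi(d\,\cdot)$ in this degree, so the inductive terms do not literally ``behave identically'' but acquire a sign flip; your stated final coefficient $+2$ is nonetheless the correct one.
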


Let now $X$ be a proper complex algebraic manifold, $Y$ a closed
integral subvariety of $X$ of codimension $p$, $\iota: \widetilde
Y\longrightarrow X$ a resolution of singularities of $Y$ and
$\mathbb C(Y)=\mathbb C(\widetilde Y)$ the function field of $Y$.  For
$f\in \mathbb C(Y)^{\times }=\mathbb C(Y)-\{0\}$, we write
$$\mathfrak{g}(f)=\frac{-1}{2}\log f\bar f\in
\mathcal{D}^{1}_{\log}(\widetilde Y\setminus \DDV f,1).$$ This is a
Green form on $\widetilde Y$ for the cycle $\DDV f$. More precisely
\begin{equation}
  \label{eq:21}
  d_{\mathcal{D}}\mathfrak{g}(f)=
  d_{\mathcal{D}}\Big(\frac{-1}{2}\log f\bar f\Big)-\delta _{\DDV f}=
  -\delta _{\DDV f}.
\end{equation}
 Then, for
$f_{1},\dots,f_{m}\in \mathbb C(Y)^{\times }$ and $1\le i\le m$,
we denote
\begin{multline} \label{eq:11}
S_m^i(f_1, \ldots , f_m)= S_m^i(\mathfrak{g}(f_1), \ldots ,
\mathfrak{g}(f_m))\\
=\sum_{\sigma \in \mathfrak S_m}
(-1)^{|\sigma|}\log |f_{\sigma(1)}|^2
\frac{df_{\sigma(2)}}{f_{\sigma(2)}}\wedge \cdots \wedge
\frac{df_{\sigma(i)}}{f_{\sigma(i)}}\wedge
\frac{d\overline{f}_{\sigma(i+1)}}{\overline{f}_{\sigma(i+1)}}
\wedge \cdots \wedge
\frac{d\overline{f}_{\sigma(m)}}{\overline{f}_{\sigma(m)}},
\end{multline}
and
\begin{equation}
  \label{eq:10}
T_m(f_1, \ldots , f_m)=\frac{1}{2m!}\sum_{i=1}^m(-1)^i
S_m^i(f_1, \ldots , f_m).
\end{equation}
This is a differential form on $\widetilde Y$, and has logarithmic
singularities
along $\Div (f_1)\cup \cdots \cup \Div (f_m)$. It is always
locally integrable because, when
 $\Div (f_1), \cdots , \Div (f_m)$ have common components, the
 graded-commutativity of the product $\bullet$ assures us that the
 possible non locally integrable terms cancel each other.
Note that, although now the definitions of $S^{i}_{m}$ and $T_{m}$ are
overloaded, there is no possible confusion. In definitions
\eqref{eq:12} and \eqref{eq:13}  the arguments are elements of a
Deligne algebra. By contrast, in definitions \eqref{eq:11} and
\eqref{eq:10} the arguments are rational functions.

We will denote the current on $X$ associated to $T_{m}$ by
\begin{equation}
  \label{eq:1}
  [T_{m}](f_1, \ldots , f_m)=\iota_{\ast}[T_{m}(f_1, \ldots , f_m)].
\end{equation}
This current belongs to $\mathcal{D}_{D}^{2p+m}(X,p+m)$. Be aware of
the conventions of \S 2.2 concerning the current associated to a
differential form.



\subsection{Goncharov's differential forms.}
For any $f_1, \ldots , f_m$ rational functions on $X$, Goncharov has
defined in \cite{Goncharov} differential forms $r_{m-1}(f_1, \ldots ,
f_m)$ as follows:
\begin{eqnarray*}
r_{m-1}(f_1, \ldots , f_m) & = & (-1)^{m}
\sum_{0\leq 2j+1\leq m}
c_{j,m}\ALT_m\left(\begin{array}{r}\log |f_1|d\log |f_2|\wedge \cdots \wedge
\log |f_{2j+1}|\wedge \\ \wedge di\arg f_{2j+2}\wedge
\cdots \wedge di\arg f_m \end{array}\right).
\end{eqnarray*}
Here  the symbol $\underset{0\leq 2j+1\leq m}{\sum }$ means
the sum over integers $j$ such that $0\leq 2j+1\leq m$, $c_{j,m}$ are
the rational numbers
$$
c_{j,m}=\frac{1}{(2j+1)!(m-2j-1)!},
$$
and $\ALT_m$ stands for the alternating sum over the symmetric
group $\mathfrak S_m$, i.e.,
$$
\ALT_m(F(f_1, \ldots , f_m))=\sum_{\sigma \in \mathfrak S_m}
(-1)^{|\sigma|}F(f_{\sigma (1)}, \ldots , f_{\sigma (m)}).$$

\begin{obs}
The sign $(-1)^m $ appears due to the difference in sign on the
differential of the Deligne complex as considered here and as
considered by Goncharov
in \cite{Goncharov}.
\end{obs}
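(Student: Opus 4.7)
The plan is to verify this remark by a direct comparison of sign conventions. First, I would recall the precise definition of $d_{\mmD}$ used throughout the present paper, following \cite{BurgosKuhnKramer} (cf.\ Section \ref{deligne}): on $\mmD^{n}(A,p)$ with $n<2p$ one has an element of $A^{n-1}_{\R}(p-1)$ and the differential is built from $-\partial -\bar \partial$, while at the boundary $n=2p$ it is realized through $-2\partial \bar \partial$. I would then compare this with the convention employed by Goncharov in \cite{Goncharov}; the expected relation is that the two differentials differ by an overall sign, i.e.\ $d_{\mmD}^{\text{G}}=-d_{\mmD}$ on each graded piece (equivalently, Goncharov uses $+\partial+\bar \partial$ where we use $-\partial-\bar \partial$).

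Second, I would check that the unprefixed formula
\begin{displaymath}
\sum_{0\le 2j+1\le m} c_{j,m}\ALT_{m}\bigl(\log|f_{1}|\, d\log|f_{2}|\wedge\cdots\wedge \log|f_{2j+1}|\wedge d i\arg f_{2j+2}\wedge\cdots\wedge d i\arg f_{m}\bigr)
\end{displaymath}
is characterized, among linear combinations of such alternating sums, by a recursive identity with respect to $d_{\mmD}^{\text{G}}$ formally analogous to the one of Proposition \ref{prop:pd}(\ref{item:3}) for the forms $T_{m}$ with respect to $d_{\mmD}$. This identification is essentially built into Goncharov's construction in \cite{Goncharov}, so it amounts to transcribing his recursion and matching it to our notation.

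Third, rewriting this recursion in the convention of the present paper by substituting $d_{\mmD}^{\text{G}}=-d_{\mmD}$ introduces exactly one sign flip per inductive step. Since $r_{m-1}(f_{1},\dots,f_{m})$ sits in total Deligne degree $m$ and the recursion descends from degree $m$ down to degree $0$, an induction on $m$ produces a cumulative factor of $(-1)^{m}$, which is exactly the prefactor written in the definition of $r_{m-1}(f_{1},\dots,f_{m})$. This verifies the remark.

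The main obstacle is careful bookkeeping of the sign shifts: one must keep track both of the shift absorbed in passing between $A^{\ast}_{\R}(p-1)$ and $A^{\ast}_{\R}(p)$ at the boundary degree $n=2p$, and of the degree at which the recursion is unwound, to be sure that the accumulated sign is $(-1)^{m}$ rather than $(-1)^{m-1}$ or $(-1)^{m+1}$. Once the conventions are pinned down the calculation is essentially formal, but the indexing of differentials and twists has to be tracked with care.
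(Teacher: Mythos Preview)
This statement is a \emph{Remark}, not a theorem, and the paper offers no proof for it: it is a one-sentence explanatory aside inserted between the definition of $r_{m-1}$ and the subsequent theorem identifying $r_{m-1}$ with $T_{m}$. There is therefore no ``paper's proof'' against which to compare your attempt. Your plan---pin down the two sign conventions for $d_{\mmD}$ and then run an induction on $m$---is a perfectly reasonable way to flesh out what the authors leave implicit.

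That said, there is a genuine gap in the inductive step as you have sketched it. The recursion you invoke, Proposition~\ref{prop:pd}(\ref{item:3}), reads
\[
d_{\mmD}T_{m}=\sum_{i=1}^{m}(-1)^{i-1}d_{\mmD}u_{i}\bullet T_{m-1}(u_{1},\dots,\widehat{u_{i}},\dots,u_{m}),
\]
and here $d_{\mmD}$ appears on \emph{both} sides. If one replaces $d_{\mmD}$ by $-d_{\mmD}$ throughout, the two minus signs cancel and the identity is unchanged; so this recursion alone cannot account for the accumulated factor $(-1)^{m}$, and forms characterized by it would not pick up any sign. What does produce the factor is the recursion at the level of currents, namely Theorem~\ref{thm:gdif} (equivalently, the chain-map identity $d_{\mmD}\circ\mmP_{s}=\mmP_{s}\circ\partial$), in which $d_{\mmD}$ appears only on the left-hand side. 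Replacing $d_{\mmD}$ by $-d_{\mmD}$ in that relation forces the degree-$m$ piece to be rescaled by $-1$ relative to the degree-$(m-1)$ piece, and iterating from degree $0$ (where both conventions agree, since $\mmP_{s}$ in degree $0$ is just $z\mapsto\delta_{z}$) yields exactly $(-1)^{m}$. Your overall strategy is sound; you just need to anchor the induction to the current-level recursion of Theorem~\ref{thm:gdif} rather than to Proposition~\ref{prop:pd}(\ref{item:3}).
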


\vskip 1pc
\begin{thm} Goncharov's form $r_{m-1}(f_1, \ldots , f_m)$ agrees
  with Wang's form $T_m(f_1, \ldots , f_m)$.
\end{thm}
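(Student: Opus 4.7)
The plan is to expand both forms in a common basis of antisymmetrized monomials and match coefficients. For $0 \leq p \leq m-1$ set
\[
\Phi_p := \ALT_m\bigl(\log|f_1|\, d\log|f_2| \wedge \cdots \wedge d\log|f_{m-p}| \wedge d i\arg f_{m-p+1} \wedge \cdots \wedge d i\arg f_m\bigr).
\]
With the substitution $p = m-2j-1$, Goncharov's form reads $r_{m-1} = (-1)^m \sum_{p:\,m-p\text{ odd}} \tfrac{1}{(m-p)!\,p!}\, \Phi_p$, so it suffices to show that $T_m$ has the same expansion in the $\Phi_p$-basis.

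First I would substitute $\log|f|^2 = 2\log|f|$, $\tfrac{df}{f} = d\log|f| + d i\arg f$ and $\tfrac{d\bar f}{\bar f} = d\log|f| - d i\arg f$ in \eqref{eq:11} and \eqref{eq:10} and expand the wedge products. Indexing each term by the subset $U \subseteq \{2,\ldots,m\}$ of positions where the $d i\arg$-component is chosen, and collecting the $-1$'s coming from the $d i\arg$-components of the $\tfrac{d\bar f}{\bar f}$-factors, one obtains
\[
T_m = \frac{1}{m!} \sum_{U \subseteq \{2,\ldots,m\}} \Bigl(\sum_{i=1}^m (-1)^{i + |U \cap \{i+1,\ldots,m\}|}\Bigr)\, \ALT_m(\log|f_1|\, \omega_U),
\]
where $\omega_U$ is the wedge with $d i\arg f_k$ at positions $k \in U$ and $d\log|f_k|$ elsewhere. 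A short sign check — the cost of sorting $\omega_U$ into block form (all $d\log$'s then all $d i\arg$'s) exactly cancels the sign of the corresponding relabelling of the $\ALT_m$-sum — shows that $\ALT_m(\log|f_1|\,\omega_U) = \Phi_{|U|}$, so the whole expression depends on $U$ only through $|U|$.

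The crux is then the combinatorial identity
\[
\sum_{|U|=p,\,U\subseteq\{2,\ldots,m\}} \sum_{i=1}^m (-1)^{i + |U \cap \{i+1,\ldots,m\}|} \;=\; \binom{m}{p}\, \frac{(-1)^m - (-1)^p}{2}.
\]
I would prove this by interchanging the order of summation and recognizing the inner sum over $U$ as the elementary symmetric polynomial $e_p(\epsilon_2,\ldots,\epsilon_m)$ with $\epsilon_k = +1$ for $k \leq i$ and $\epsilon_k = -1$ for $k > i$, which equals $[x^p](1+x)^{i-1}(1-x)^{m-i}$. Summing the resulting series,
\[
\sum_{i=1}^m (-1)^i (1+x)^{i-1}(1-x)^{m-i} = \frac{(-1)^m(1+x)^m - (1-x)^m}{2},
\]
and extracting the coefficient of $x^p$ gives the stated value. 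It vanishes when $m-p$ is even and equals $(-1)^m\binom{m}{p}$ when $m-p$ is odd; dividing by $m!$ one reads off the expansion $T_m = (-1)^m \sum_{p:\,m-p\text{ odd}} \tfrac{1}{(m-p)!\,p!}\, \Phi_p = r_{m-1}$. The main obstacle is the sign bookkeeping in the expansion step — the combined interplay of the $(-1)^i$ from \eqref{eq:10}, the $-1$'s from $\tfrac{d\bar f}{\bar f}$, and the wedge-rearrangement/ALT-relabelling cancellation; once these are correctly accounted for, the remaining work is the elementary generating-function computation above.
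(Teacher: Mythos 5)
Your proof is correct, and it runs the computation in the opposite direction from the paper, with a noticeably lighter combinatorial core. The paper starts from $r_{m-1}$ and extracts the $(i-1,m-i)$-bigraded component of each $\ALT_m$-term, expressing $r_{m-1}$ as a triple sum of multiples of the forms $S_m^i$ of \eqref{eq:11}; collapsing that triple sum requires Lemma \ref{lemmafactorials}, which is itself proved by induction via repeated differentiation of a binomial identity. You instead expand $T_m$ in Goncharov's modulus/argument basis $\Phi_p$, and the coefficient of $\Phi_p$ is governed by the single generating function $\sum_{i=1}^m(-1)^i(1+x)^{i-1}(1-x)^{m-i}$, which a geometric-series summation evaluates in closed form to $\tfrac{1}{2}\bigl((-1)^m(1+x)^m-(1-x)^m\bigr)$; extracting the coefficient of $x^p$ gives the vanishing for $m-p$ even and the value $(-1)^m\binom{m}{p}$ for $m-p$ odd, so that after dividing by $m!$ you recover exactly Goncharov's coefficients $c_{j,m}$ with $p=m-2j-1$. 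Both arguments are change-of-basis coefficient comparisons between $\{df/f,\,d\bar f/\bar f\}$ and $\{d\log|f|,\,di\arg f\}$, but yours trades the paper's triple sum and inductive lemma for one elementary symmetric function identity. The two sign checks you flag are indeed the only delicate points, and both work out: the factor $2$ from $\log|f|^2=2\log|f|$ cancels against the $1/(2m!)$ in \eqref{eq:10} to produce your $1/m!$, and the wedge-sorting sign cancels against the $\ALT_m$-relabelling sign because the sorting permutation fixes the distinguished first position.
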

\begin{proof}
Since
$d\log |f|=\frac{1}{2}(\frac{df}{f}+\frac{d\overline{f}}{\overline{f}})$
and
$di\arg f=\frac{1}{2}(\frac{df}{f}-\frac{d\overline{f}}{\overline{f}})$,
the $(i-1, m-i)$-part of the form
$$
\ALT_m\left(\log |f_1|d\log |f_2|\wedge \ldots \wedge
d\log |f_{2j+1}|\wedge di\arg f_{2j+2}\wedge
\ldots \wedge di\arg f_m\right)
$$
is equal to
$$
\frac{1}{2^m}\sum_{k=0}^{m-i}\binom{2j}{k}
\binom{m-2j-1}{m-i-k}(-1)^{m-i-k}S_m^i(f_1, \ldots , f_m).
$$
Hence
\begin{align*}
r_{m-1}&(f_1, \ldots , f_m)=\frac{1}{2^m}
\sum_{0\leq 2j+1\leq m}\sum_{i=1}^m\sum_{k=0}^{m-i}c_{j,m}
\binom{2j}{k}\binom{m-2j-1}{m-i-k}(-1)^{i+k}
S_m^i(f_1, \ldots , f_m)  \\
=&\frac{1}{2^m}\sum_{i=1}^m\sum_{k=0}^{m-i}
\frac{(-1)^{i+k}}{k!(m-i-k)!}\sum_{k\leq 2j\leq k+i-1}
\frac{1}{(2j+1)(2j-k)!(i+k-2j-1)!}S_m^i(f_1, \ldots , f_m).
\end{align*}
By Lemma \ref{lemmafactorials} we have
\begin{align*}
r_{m-1}&(f_1, \ldots , f_m) =\frac{1}{2^m}\sum_{i=1}^m\sum_{k=0}^{m-i}\frac{(-1)^{i+k}}{k!(m-i-k)!}
\left(\sum_{l=0}^k\frac{(-1)^lk!2^{i-1+l}}{(k-l)!(i+l)!}\right)
S_m^i(f_1, \ldots , f_m)  \\
=&\frac{1}{2^m}\sum_{i=1}^m\sum_{l=0}^{m-i}
\frac{(-1)^l2^{i-1+l}}{(i+l)!}\left(\sum_{k=l}^{m-i}
\frac{(-1)^{i+k}}{(k-l)!(m-i-k)!}\right)S_m^i(f_1, \ldots , f_m)  \\
=&\frac{1}{2^m}\sum_{i=1}^m\sum_{l=0}^{m-i}
\frac{(-1)^i 2^{i-1+l}}{(i+l)!}\left(\sum_{k=0}^{m-i-l}
\frac{(-1)^{k}}{k!(m-i-k-l)!}\right)S_m^i(f_1, \ldots , f_m).
\end{align*}
Note that
$$n!\sum_{k=0}^n \frac{(-1)^k}{k! (n-k)!}=\sum_{k=0}^n (-1)^k \binom{n}{k} =\begin{cases}0, &\ n>0,  \\ 1, &\ n=0. \end{cases}$$ This follows from the equation
$(1-x)^{n}=\sum_{k=0}^{n}(-1)^k \binom{n}{k}  x^k $, taking $x=1$.
%
Therefore
$$
r_{m-1}(f_1, \ldots , f_m)=\frac{1}{2^m}\sum_{i=1}^m
\frac{(-1)^{i}2^{m-1}}{m!}S_m^i(f_1, \ldots , f_m)=
T_m(f_1, \ldots , f_m).
$$
\end{proof}

\begin{lemma}\label{lemmafactorials}
For every pair of integers $0\leq q\leq p$, we have
$$
\sum_{q\leq 2j\leq p}\frac{1}{(2j+1)(2j-q)!(p-2j)!}=
\sum_{l=0}^q\frac{(-1)^lq!\ 2^{p-q+l}}{(q-l)!(p-q+l+1)!}.
$$
\end{lemma}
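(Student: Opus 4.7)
The plan is to clear the awkward denominators and reduce the identity to a purely binomial one. Multiplying both sides of the lemma by $(p+1)!/q!$, the left-hand side simplifies via
\begin{displaymath}
\frac{(p+1)!}{q!(2j+1)(2j-q)!(p-2j)!}=\binom{2j}{q}\binom{p+1}{2j+1}
\end{displaymath}
to $\sum_{q\le 2j\le p}\binom{2j}{q}\binom{p+1}{2j+1}$, while on the right-hand side the change of summation index $k=q-l$ together with $(p+1)!/(k!(p-k+1)!)=\binom{p+1}{k}$ gives $\sum_{k=0}^q(-1)^{q-k}2^{p-k}\binom{p+1}{k}$. Thus the lemma is equivalent to
\begin{equation*}
\sum_{q\le 2j\le p}\binom{2j}{q}\binom{p+1}{2j+1}=\sum_{k=0}^q(-1)^{q-k}2^{p-k}\binom{p+1}{k}.\tag{$\ast$}
\end{equation*}

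To prove $(\ast)$, I would show that both sides equal the coefficient of $x^q$ in the rational function
\begin{displaymath}
F(x)=\frac{(2+x)^{p+1}}{2(1+x)}.
\end{displaymath}
For the right-hand side of $(\ast)$, expanding $(2+x)^{p+1}$ by the binomial theorem and multiplying by the geometric series $(1+x)^{-1}=\sum_{n\ge 0}(-1)^n x^n$, the Cauchy product immediately yields $[x^q]F(x)=\sum_{k=0}^q(-1)^{q-k}2^{p-k}\binom{p+1}{k}$. For the left-hand side of $(\ast)$, I would use $\binom{2j}{q}=[x^q](1+x)^{2j}$ to pull the coefficient extraction out of the sum, and substitute $y=1+x$ in the standard identity $\sum_j\binom{p+1}{2j+1}y^{2j+1}=\tfrac12\bigl((1+y)^{p+1}-(1-y)^{p+1}\bigr)$ to obtain
\begin{displaymath}
\sum_j\binom{p+1}{2j+1}(1+x)^{2j}=\frac{(2+x)^{p+1}-(-x)^{p+1}}{2(1+x)}.
\end{displaymath}
Because $q\le p<p+1$, the monomial $(-x)^{p+1}$ does not contribute to $[x^q]$, so the left-hand side of $(\ast)$ also equals $[x^q]F(x)$.

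The main difficulty is conceptual: identifying the generating function $F(x)$. The factor $2^{p-k}$ on the right-hand side of $(\ast)$ suggests the numerator $(2+x)^{p+1}$, and the denominator $2(1+x)$ is then forced by the need to produce the alternating signs on one side and to match the division by $(1+x)=y$ coming from the substitution on the other. Once $F$ is in hand, the two coefficient extractions are routine binomial manipulations and no clever combinatorial argument is required.
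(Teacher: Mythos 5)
Your proof is correct, but it takes a genuinely different route from the paper's. Both arguments ultimately rest on the same polynomial identity $\tfrac12\bigl((1+x)^{p+1}-(1-x)^{p+1}\bigr)=\sum_{0\le 2j\le p}\binom{p+1}{2j+1}x^{2j+1}$, which is equation \eqref{poli} in the paper. The paper differentiates it $q$ times, evaluates at $x=1$, and — because $\frac{d^q}{dx^q}x^{2j+1}\big|_{x=1}$ splits via $2j+1=q+(2j-q+1)$ into two pieces — obtains only a two-term recursion $(p+1)\cdots(p-q+2)\,2^{p-q}=q(p+1)!A(q-1,p)+(p+1)!A(q,p)$, which then has to be closed by induction on $q$. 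You instead normalize by $(p+1)!/q!$ to turn the claim into the purely binomial identity $(\ast)$, and identify both sides as $[x^q]\,(2+x)^{p+1}/(2(1+x))$: the right side via the Cauchy product with the geometric series, the left side via $\binom{2j}{q}=[x^q](1+x)^{2j}$ and the substitution $y=1+x$ in \eqref{poli} (the extension of the sum to all $0\le 2j\le p$ is harmless since both $\binom{2j}{q}$ and $[x^q](1+x)^{2j}$ vanish for $2j<q$, and the discarded $(-x)^{p+1}/(2(1+x))$ term has valuation $p+1>q$). Your approach is essentially the ``Taylor coefficient at $x=-1$'' reformulation of the paper's ``$q$-th derivative at $x=1$'' computation, but by extracting a single coefficient from a closed-form rational function rather than iterating derivatives of $x^{2j+1}$, you get the identity in one shot and avoid the induction entirely; the cost is the need to guess the generating function $F(x)$, whereas the paper's induction is mechanical once the recursion is set up.
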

\begin{proof}
We denote the left hand side of the equation above by $A(q, p)$. The statement will be shown by induction on $q$.
When $q=0$,
$$
A(0, p)=\sum_{0\leq 2j\leq p}\frac{1}{(2j+1)!(p-2j)!}=
\frac{1}{(p+1)!}\sum_{0\leq 2j\leq p}\binom{p+1}{2j+1}.
$$
Since
\begin{equation}\label{poli}\frac{1}{2}\big((1+x)^{p+1}-(1-x)^{p+1}\big)=\sum_{0\leq 2j\leq p}
\binom{p+1}{2j+1}x^{2j+1},
\end{equation}
we obtain that (taking $x=1$)
$$A(0, p)=\frac{2^p}{(p+1)!}$$ as desired. Let us assume that the statement is true for $q-1$ and
let us now differentiate $q$ times  equation \eqref{poli}.
$$
\frac{1}{2}\big((1+x)^{p+1}-(1-x)^{p+1}\big)^{(q)} = \frac{(p+1)p\cdots (p-q+2)}{2}\big((1+x)^{p-q+1}-
(-1)^q(1-x)^{p-q+1}\big).
$$
On the other side, writing $B=\Big(\sum_{0\leq 2j\leq p}\binom{p+1}{2j+1}x^{2j+1}\Big)^{(q)}$, we have
\begin{eqnarray*}
B & = & \sum_{q-1\leq 2j\leq p}(2j+1)2j\cdots (2j-q+2)\binom{p+1}{2j+1}x^{2j-q+1} \\
&=& q\sum_{q-1\leq 2j\leq p}2j\cdots (2j-q+2)\binom{p+1}{2j+1}x^{2j-q+1} \\ && \hskip 4pc +
\sum_{q\leq 2j\leq p} 2j\cdots (2j-q+1)\binom{p+1}{2j+1}x^{2j-q+1}
\\
&=& \sum_{q-1\leq 2j\leq p} \frac{q(p+1)!}{(2j+1)(2j-q+1)!(p-2j)!}x^{2j-q+1}\\ &&  \hskip 4pc + \sum_{q\leq 2j\leq p} \frac{(p+1)!}{(2j+1)(2j-q)!(p-2j)!}x^{2j-q+1}.
\end{eqnarray*}
where we decomposed the sums using $2j+1=q+2j-q+1$.
%
%
%
Putting $x=1$ and joining the two sides of equation \eqref{poli}, we obtain
$$
(p+1)p\cdots (p-q+2)2^{p-q}=q(p+1)!A(q-1, p)+(p+1)!A(q, p).
$$
Hence, using the induction hypothesis, we have
\begin{eqnarray*}
A(q, p)&=&\frac{2^{p-q}}{(p-q+1)!}-qA(q-1, p) =\frac{2^{p-q}}{(p-q+1)!}-\sum_{l=0}^{q-1}\frac{q(-1)^l(q-1)!\ 2^{p-q+1+l}}{(q-1-l)!(p-q+l+2)!} \\
&=& \frac{2^{p-q}}{(p-q+1)!}+\sum_{l=1}^{q-1}\frac{(-1)^l q!\ 2^{p-q+l}}{(q-l)!(p-q+l+1)!}
=\sum_{l=0}^q\frac{(-1)^lq!2^{p-q+l}}{(q-l)!(p-q+l+1)!}.
\end{eqnarray*}
\end{proof}

\subsection{Relation of currents}
The assignment defined by $T_{m}$ can be seen as a morphism of abelian
groups
$$
T_m:\wedge^m\mathbb C(Y)^{\times }\longrightarrow
\mathcal{D}^{m}(\widetilde Y\setminus \mathcal{Z}^{1},m),
$$
whereas the assignment defined by $[T_{m}]$ gives a morphism of
abelian groups
$$
[T_m]:\wedge^m\mathbb C(Y)^{\times }\longrightarrow
\mathcal{D}_{D}^{2p+m}(X,p+m),
$$

We are now interested in differentiating the current $[T_m]$.
Assume $Y$ is normal and $Z\subset Y$ is a closed integral
subscheme of codimension one.
Then we can define the residue map
$$
\RES_Z:\wedge^m\mathbb C(Y)^{\times }\longrightarrow
\wedge^{m-1}\mathbb C(Z)^{\times }
$$
by means of the valuation of $\mathbb C(Y)$ with respect to $Z$.
For an arbitrary closed integral subscheme $Y$ of $X$, we define
the current $([T_{m-1}]\circ \RES )(f_1, \ldots , f_m)$ on $X$ by
$$
([T_{m-1}]\circ \RES )(f_1, \ldots , f_m)=
\sum_{Z\in \overline {Y}^{(1)}}(\iota_{Z})_{\ast}[
T_{m-1}\left(\RES_Z(\pi^*f_1, \ldots , \pi^*f_m)\right)],
$$
where $\overline{Y}$ is the normalization of $Y$, $\overline{Y}^{(1)}$
is the set of irreducible closed
subvarieties of codimension one on $\overline Y$ and
$\iota_{Z}:\widetilde Z\longrightarrow X$ is the composition of a
resolution of singularities of $Z$ with the natural map to $X$.

Then the differential of $T_m(f_1, \ldots , f_m)$ is described
as follows (\cite{Goncharov}, Proposition 2.8):

\begin{prop}\label{prop:dif}
Let $f_{1},\dots,f_{m}\in \mathbb{C}(Y)$. Then,
as differential forms on $\widetilde Y$ with logarithmic singularities,
we have
\begin{equation}
  \label{eq:5}
  dT_{m}(f_{1},\dots,f_{m})=\frac{(-1)^{m}}{2}\left(
    \frac{df_{1}}{f_{1}}\land\dots\land \frac{df_{m}}{f_{m}}+
    (-1)^{m-1}\frac{d\bar f_{1}}{\bar f_{1}}\land\dots\land
    \frac{d\bar f_{m}}{\bar f_{m}}
  \right),
\end{equation}
and
\begin{equation}
  \label{eq:6}
  d_{\mathcal{D}}T_{m}(f_{1},\dots,f_{m})=0.
\end{equation}
As currents on $X$ we have, for $m>1$,
\begin{multline}\label{eq:7}
  d[T_{m}](f_{1},\dots,f_{m})=\frac{(-1)^{m}}{2}\iota_{\ast}\left[
    \frac{df_{1}}{f_{1}}\land\dots\land \frac{df_{m}}{f_{m}}+
    (-1)^{m-1}\frac{d\bar f_{1}}{\bar f_{1}}\land\dots\land
    \frac{d\bar f_{m}}{\bar f_{m}}
  \right]\\
  + ([T_{m-1}]\circ \RES)(f_{1},\dots,f_{m}),
\end{multline}
for $m=1$,
\begin{equation}
  \label{eq:15}
   d[T_{1}](f_{1})=\frac{-1}{2}\iota_{\ast}\left[
    \frac{df_{1}}{f_{1}}+
    \frac{d\bar f_{1}}{\bar f_{1}}  \right],
\end{equation}
and
\begin{equation}\label{eq:8}
  d_{\mathcal{D}}[T_{m}](f_{1},\dots,f_{m})=
  -([T_{m-1}]\circ \RES)(f_{1},\dots,f_{m}).\end{equation}
\end{prop}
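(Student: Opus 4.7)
The plan is to substitute $u_{i}=\mathfrak{g}(f_{i})$ into Propositions 5.1 and 5.2, interpret the resulting identities first as smooth forms on $\w{Y}\setminus \bigcup_{i}\Div(f_{i})$ and then as currents on $X$ via the Poincar\'e--Lelong identity (3.21), and finally identify the residue contributions with $([T_{m-1}]\circ \RES)(f_{1},\dots,f_{m})$.

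For the smooth identities (5.5) and (5.6), the key observation is that $\log|f_{i}|^{2}$ is pluriharmonic on $\w{Y}\setminus \Div(f_{i})$, so $\partial\bar\partial\mathfrak{g}(f_{i})=0$ and $d_{\mmD}\mathfrak{g}(f_{i})=0$ as smooth forms. Consequently, the recursive terms in Proposition 5.2 and the entire right-hand side of Proposition 5.1(3) vanish, yielding (5.6). For (5.5), using $\partial\mathfrak{g}(f_{i})=-\tfrac{1}{2}df_{i}/f_{i}$ and $\bar\partial\mathfrak{g}(f_{i})=-\tfrac{1}{2}d\bar f_{i}/\bar f_{i}$, an immediate evaluation identifies the surviving $(m,0)$ and $(0,m)$ pieces $2^{m-1}(u_{1},\dots,u_{m})^{(m)}$ and $(-1)^{m-1}2^{m-1}(u_{1},\dots,u_{m})^{(0)}$ with the two summands on the right-hand side.

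For the current identities, I would first treat the base case (5.15): since $\mathfrak{g}(f_{1})$ is locally integrable and the distributional derivative $d$ alone (unlike $\partial\bar\partial$) produces no Poincar\'e--Lelong contribution from a meromorphic divisor, $d[\mathfrak{g}(f_{1})]=[d\mathfrak{g}(f_{1})]$, and pushing forward along $\iota$ gives (5.15). Identity (5.8) is then obtained by applying Proposition 5.1(3) in the Dolbeault algebra of currents on $\w Y$ and invoking (3.21):
\begin{equation*}
d_{\mmD}[T_{m}](f_{1},\dots,f_{m})=-\sum_{i=1}^{m}(-1)^{i-1}\delta_{\Div(f_{i})}\bullet[T_{m-1}(f_{1},\dots,\widehat{f_{i}},\dots,f_{m})].
\end{equation*}
Equation (5.7) is obtained analogously from Proposition 5.2 applied to currents: the first two terms reproduce the pushforward of (5.5), while the $\partial\bar\partial$-terms combine, via $\partial\bar\partial\mathfrak{g}(f_{i})=\tfrac{1}{2}\delta_{\Div(f_{i})}$, into exactly the residue contribution $+([T_{m-1}]\circ\RES)(f_{1},\dots,f_{m})$ with the opposite sign to (5.8).

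The main obstacle is identifying the sum $\sum_{i}(-1)^{i-1}\delta_{\Div(f_{i})}\bullet[T_{m-1}(\dots,\widehat{f_{i}},\dots)]$ with $([T_{m-1}]\circ\RES)(f_{1},\dots,f_{m})$. This requires a local analysis at a generic point of each $Z\in \overline Y^{(1)}$: writing $f_{j}=z^{n_{j}}u_{j}$ with $z$ a local parameter for $Z$ and $u_{j}$ units, one expands each $\mathfrak{g}(f_{j})$ into its singular part $-\tfrac{n_{j}}{2}\log|z|^{2}$ plus a regular part and integrates out the delta to restrict to $\w Z$. Regrouping over $i$ and exploiting the graded-commutativity of $\bullet$ produces precisely the Wang form of the tame symbol $\RES_{Z}(f_{1},\dots,f_{m})\in \wedge^{m-1}\C(Z)^{\times}$; summing over $Z$ and pushing forward to $X$ concludes the argument.
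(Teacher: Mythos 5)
Your proposal is correct and follows essentially the same route as the paper's (very terse) proof: substitute $u_{i}=\mathfrak{g}(f_{i})$ into Propositions \ref{prop:pd} and \ref{prop:1}, use $\partial\bar\partial\log f\bar f=0$ for the smooth identities \eqref{eq:5}--\eqref{eq:6}, and use the Green-current equation \eqref{eq:21} to produce the residue terms in \eqref{eq:7}--\eqref{eq:8}. The only divergence is at the final identification of the boundary terms with $[T_{m-1}]\circ\RES$, where the paper reduces to the case of a normal crossings union with no common components via resolution of singularities while you carry out a direct local valuation-theoretic computation at the generic points of the $Z\in\overline{Y}^{(1)}$; both are adequate fillings of the same gap.
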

\begin{proof}
  This is proved in \cite{Goncharov} \S 2. Equations \eqref{eq:5}
  and \eqref{eq:6} follow directly from Propositions \ref{prop:pd} and \ref{prop:1}, using that $\partial\bar
  \partial \log f\bar f=0$ for any holomorphic function $f$. When
  $\DDV f_{1}\cup
  \dots \cup \DDV f_{m}$ is a
  normal crossing divisor, and these divisors do not have any common
  components, then equations \eqref{eq:7} and \eqref{eq:8} follow
  from the same propositions  using (\ref{eq:21}).
  The general case can be reduced to this one by
  using resolution of singularities.
\end{proof}

There are two main examples of the construction of this section that we
are interest in. The first one is the original definition of Wang's
forms, that are tailored to the cubical setting.

Let $\C^*=\P^1\setminus \{0,\infty\}$. If $(x_i:y_i)$ are
projective coordinates on the $i$-th projective line in
$(\P^1)^m$, let $f_i=y_i/x_i$ be the rational function on $(\P^1)^m$.
Then, Wang's forms defined in \cite{Wang} (see also
\cite{BurgosWang}) are given by
\begin{equation}\label{burgoswang}
W_m=T_m(y_1/x_1,\dots,y_m/x_m)\in \mmD^{m}_{\log}((\C^*)^m,m).
\end{equation}
In particular $W_{0}=1\in \mmD^{0}_{\log}(\Spec(\C),0)$.
We denote by $[W_{n}]\in \mmD^{m}_{D}((\P^1)^m,m)$ the associated
current, which we call Wang's current. In this case Proposition \ref{prop:dif} leads
\begin{thm}\label{thm:wdif} For every $m\ge 1$, Wang's currents
  satisfy the relation
  \begin{displaymath}
    d_{\mathcal{D}}[W_{m}]=\sum
    _{i=1}^{m}\sum_{j=0,1}(-1)^{i+j}(\delta ^{i}_{j})_{\ast} [W_{m-1}],
  \end{displaymath}
  where the maps $\delta ^{i}_{j}$ are the structural maps of the
  cubical structure of $(\P^1)^m$.
\end{thm}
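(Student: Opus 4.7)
The plan is to apply equation \eqref{eq:8} of Proposition \ref{prop:dif} with $X = Y = (\P^{1})^{m}$ (so $p = 0$ and $\iota = \mathrm{id}$) to the rational functions $f_{i} = y_{i}/x_{i}$. Because each $f_{i}$ only involves coordinates on the $i$-th factor, the divisors $\DDV(f_{i}) = \{y_{i} = 0\} - \{x_{i} = 0\}$ pairwise share no irreducible components and their union is a normal crossing divisor, so the proposition applies and gives
\begin{displaymath}
  d_{\mathcal{D}}[W_{m}] \;=\; -\bigl([T_{m-1}] \circ \RES\bigr)(f_{1}, \ldots, f_{m}).
\end{displaymath}

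The sum on the right is supported on those codimension-one integral subvarieties where some $f_{k}$ fails to be a unit, namely $Z_{i,0} := \{y_{i} = 0\}$ and $Z_{i,1} := \{x_{i} = 0\}$ for $i = 1, \ldots, m$. Each coface map $\delta^{i}_{j}$ is an isomorphism $(\P^{1})^{m-1} \xrightarrow{\sim} Z_{i,j}$, so $\iota_{Z_{i,j}} = \delta^{i}_{j}$, and under this identification the restriction $f_{k}|_{Z_{i,j}}$ with $k \neq i$ is again the Wang variable $y_{k}/x_{k}$ on the corresponding factor of $(\P^{1})^{m-1}$.

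The core calculation is the tame symbol at each $Z_{i,j}$. Along $Z_{i,0}$ one has $v(f_{i}) = 1$ and $v(f_{k}) = 0$ for $k \neq i$, so by the standard characterization of the tame symbol combined with the antisymmetry of the wedge,
\begin{displaymath}
  \RES_{Z_{i,0}}(f_{1} \wedge \cdots \wedge f_{m}) \;=\; (-1)^{i-1}\,\bigl(f_{1} \wedge \cdots \wedge \widehat{f_{i}} \wedge \cdots \wedge f_{m}\bigr)\big|_{Z_{i,0}}.
\end{displaymath}
Along $Z_{i,1}$ the valuation is $v(f_{i}) = -1$; writing $f_{i} = y_{i} \cdot x_{i}^{-1}$ and expanding $\wedge^{m}$ by bilinearity shows that only the $x_{i}^{-1}$ summand contributes (since $y_{i}$ is a unit at $Z_{i,1}$), and the exponent $-1$ produces an extra minus sign, yielding
\begin{displaymath}
  \RES_{Z_{i,1}}(f_{1} \wedge \cdots \wedge f_{m}) \;=\; (-1)^{i}\,\bigl(f_{1} \wedge \cdots \wedge \widehat{f_{i}} \wedge \cdots \wedge f_{m}\bigr)\big|_{Z_{i,1}}.
\end{displaymath}

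Applying $T_{m-1}$ to these residues and using the identification $f_{k}|_{Z_{i,j}} = y_{k}/x_{k}$ gives $(-1)^{i-1+j}\,W_{m-1}$ on $(\P^{1})^{m-1}$; pushing forward by $(\iota_{Z_{i,j}})_{*} = (\delta^{i}_{j})_{*}$ and absorbing the outer minus sign from \eqref{eq:8} turns $(-1)^{i-1+j}$ into $(-1)^{i+j}$. Summing over $i$ and $j$ yields exactly the claimed formula. For $m = 1$ the same argument goes through with the convention $T_{0} = W_{0} = 1$, or one may invoke \eqref{eq:21} directly. The only genuinely delicate point is the sign at $Z_{i,1}$, resolved by the decomposition $f_{i} = y_{i} \cdot x_{i}^{-1}$; everything else is a transparent application of Proposition \ref{prop:dif}.
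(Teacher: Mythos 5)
Your proposal is correct and follows essentially the same route as the paper: apply Proposition \ref{prop:dif} (equation \eqref{eq:8}) to $f_{i}=y_{i}/x_{i}$ and compute the residues along the coordinate divisors $D^{i}_{j}$, which the paper records as $(-1)^{i+j+1}\bigl(\frac{y_{1}}{x_{1}}\wedge\cdots\wedge\widehat{\frac{y_{i}}{x_{i}}}\wedge\cdots\wedge\frac{y_{m}}{x_{m}}\bigr)\big|_{D^{i}_{j}}$, matching your signs $(-1)^{i-1}$ at $j=0$ and $(-1)^{i}$ at $j=1$. Your write-up merely makes explicit the valuation computation that the paper leaves implicit.
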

\begin{proof}
  For $i=1,\dots,m$ and $j=0,1$, let $D^{i}_{j}$ be the divisor image
  of the structural map $\delta ^{i}_{j}$. Then the only non zero
  residues of  $\frac{y_{1}}{x_{1}}\land\dots\land
  \frac{y_{m}}{x_{m}}$ are
  \begin{displaymath}
    \RES_{D^{i}_{j}}\Big(\frac{y_{1}}{x_{1}}\land\dots\land
    \frac{y_{m}}{x_{m}}\Big)=
    (-1)^{i+j+1}\Big(\frac{y_{1}}{x_{1}}\land\dots\land
    \widehat{\frac{y_{i}}{x_{i}}}\land\dots\land
    \frac{y_{m}}{x_{m}}\Big)\Big|_{D^{i}_{j}}.
  \end{displaymath}
  Therefore, the result follows directly from Proposition \ref{prop:dif}.
\end{proof}

Wang's forms have another property that will be useful when
establishing the convergence of certain integrals.
\begin{prop}\label{prop:vanish} Let $D^{m}$ be the divisor
  $D^{m}=(\P^{1})^{m}\setminus \square ^{m}$ introduced in \S
  \ref{BFregulator}. Let $f:X\longrightarrow
  (\C^*)^{m}$ be a holomorphic map that factors through $D^{m}\cap
  (\C^*)^{m}$. Then
  \begin{displaymath}
    f^{\ast}W_{m}=0.
  \end{displaymath}
\end{prop}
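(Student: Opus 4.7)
The plan is to use that $D^{m}\cap(\C^{*})^{m}$ is a union of hyperplane sections where one coordinate equals $1$, so that locally some pullback coordinate function must be identically $1$, and then to observe that $W_{m}$ vanishes whenever one of its arguments is trivial. Since $\square=\P^{1}\setminus\{1\}$, one has
\[
D^{m}\cap(\C^{*})^{m}=\bigcup_{i=1}^{m}H_{i},\qquad H_{i}:=\{(z_{1},\dots,z_{m})\in(\C^{*})^{m}\mid z_{i}=1\},
\]
so the hypothesis says that $f(X)\subset\bigcup_{i}H_{i}$.

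The vanishing of $f^{\ast}W_{m}$ is a local question, so I would work on a connected open neighbourhood $U$ of an arbitrary point of $X$. Each $H_{i}$ is a closed analytic subset of $(\C^{*})^{m}$, so each preimage $f|_{U}^{-1}(H_{i})$ is a closed analytic subset of $U$, and by hypothesis their union is $U$. Irreducibility of the connected complex manifold $U$ forces $U=f|_{U}^{-1}(H_{i_{0}})$ for some index $i_{0}$; equivalently, the coordinate function $f_{i_{0}}:=(y_{i_{0}}/x_{i_{0}})\circ f$ is identically $1$ on $U$. By the functoriality of the construction of $T_{m}$, together with (\ref{burgoswang}),
\[
f^{\ast}W_{m}=T_{m}(f_{1},\dots,f_{m})=\frac{1}{2m!}\sum_{j=1}^{m}(-1)^{j}S_{m}^{j}(\mathfrak{g}(f_{1}),\dots,\mathfrak{g}(f_{m})),
\]
and $\mathfrak{g}(f_{i_{0}})=-\tfrac{1}{2}\log|f_{i_{0}}|^{2}=0$ on $U$; in particular $\partial\mathfrak{g}(f_{i_{0}})=\bar\partial\mathfrak{g}(f_{i_{0}})=0$ as well.

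Finally I would inspect formula (\ref{eq:12}): every summand of $S_{m}^{j}$ has the shape $u_{\sigma(1)}\,\partial u_{\sigma(2)}\wedge\dots\wedge\bar\partial u_{\sigma(m)}$ for some $\sigma\in\mathfrak{S}_{m}$, so each of $u_{1},\dots,u_{m}$ appears in every summand exactly once (possibly under $\partial$ or $\bar\partial$). With $u_{i_{0}}=\mathfrak{g}(f_{i_{0}})=0$ each such term carries a zero factor, so $S_{m}^{j}=0$ for every $j$ and hence $f^{\ast}W_{m}|_{U}=0$. Since this holds near every point of $X$, we conclude $f^{\ast}W_{m}=0$. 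No step is a serious obstacle; the only mild subtlety is the local irreducibility argument that some coordinate is identically $1$, which holds because an irreducible complex manifold cannot be exhausted by a finite union of proper closed analytic subsets.
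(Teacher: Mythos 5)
Your proof is correct and takes essentially the same approach as the paper, whose proof is the one-line remark that $D^{m}=\bigcup_{i=1}^{m}\{x_{i}=y_{i}\}$ and $\log(1)=0$. The details you supply --- the local irreducibility argument forcing some $f_{i_{0}}\equiv 1$ on each connected open subset, and the observation that every monomial of $S_{m}^{j}$ contains $u_{i_{0}}$, $\partial u_{i_{0}}$ or $\bar \partial u_{i_{0}}$ as a factor --- are precisely what is needed to make that one-liner rigorous.
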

\begin{proof}
  This follows from the definition of $W_{m}$, because
$ D^{m}=\bigcup_{i=1}^{m}\{x_{i}=y_{i}\} $   and $\log(1)=0$.
\end{proof}

The second example is the original definition of Goncharov, that is
tailored to the simplicial setting. Recall that we have fixed
homogeneous coordinates $z_0,\ldots,z_n$ of $\mathbb P^n$ and we
denoted
$\Delta^n=\mathbb P^n\setminus H_n,$ where $H_n$ is the hyperplane
defined by
$z_0+\cdots +z_n=0$.

We denote
\begin{equation}
  \label{eq:16}
  G_{m}=T_{m}\Big(\frac{z_{1}}{z_{0}},\dots,\frac{z_{m}}{z_{0}}\Big)
\end{equation}
and let $[G_{m}]$ be the associated current (called Goncharov's current).

In this case Proposition \ref{prop:dif} leads
\begin{thm}\label{thm:gdif} Goncharov's currents satisfy the relation
  \begin{displaymath}
    d_{\mathcal{D}}[G_{m}]=\sum
    _{i=0}^{m}(-1)^{i}(\partial ^{i})_{\ast} [G_{m-1}],
  \end{displaymath}
  where the maps $\partial ^{i}$ are the structural maps of the
  semi-simplicial structure of $\P^m$.
\end{thm}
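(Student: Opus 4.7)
The strategy parallels the proof of Theorem~\ref{thm:wdif}: apply Proposition~\ref{prop:dif}, equation~\eqref{eq:8}, to the functions $f_{j}=z_{j}/z_{0}$ on $Y=\mathbb{P}^{m}$, and match the resulting residues with the boundary pushforwards. Since $\DDV(z_{j}/z_{0})=D_{j}-D_{0}$ with $D_{i}=\{z_{i}=0\}$, the only codimension-one subvarieties contributing to the residue sum are $D_{0},D_{1},\dots,D_{m}$, and each $D_{i}$ is precisely the image of the coface map $\partial^{i}:\mathbb{P}^{m-1}\to\mathbb{P}^{m}$.

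For $i\ge 1$, the function $z_{i}/z_{0}$ has valuation $+1$ at $D_{i}$ while the remaining $z_{j}/z_{0}$ are units there, so the tame symbol gives
\[
\RES_{D_{i}}\Big(\frac{z_{1}}{z_{0}}\wedge\cdots\wedge\frac{z_{m}}{z_{0}}\Big)=(-1)^{i-1}\Big(\frac{z_{1}}{z_{0}}\wedge\cdots\widehat{\frac{z_{i}}{z_{0}}}\cdots\wedge\frac{z_{m}}{z_{0}}\Big)\bigg|_{D_{i}}.
\]
Under the identification $D_{i}\cong\mathbb{P}^{m-1}$ via $\partial^{i}$ the restricted functions become exactly $w_{1}/w_{0},\dots,w_{m-1}/w_{0}$, so combining with the overall minus sign of~\eqref{eq:8} the contribution of $D_{i}$ to $d_{\mathcal{D}}[G_{m}]$ is $(-1)^{i}(\partial^{i})_{\ast}[G_{m-1}]$.

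The delicate case is $i=0$, where every $z_{j}/z_{0}$ has valuation $-1$. I would take $\pi=z_{0}/z_{1}$ as local uniformizer and write $z_{j}/z_{0}=w_{j}\pi^{-1}$ with $w_{1}=1$ and $w_{j}=z_{j}/z_{1}$ a unit at the generic point of $D_{0}$ for $j\ge 2$, and then expand $\pi^{-1}\wedge w_{2}\pi^{-1}\wedge\cdots\wedge w_{m}\pi^{-1}$ by multilinearity of the Milnor symbol. Every term containing two or more factors of $\pi^{\pm 1}$ can be rewritten, using the identities $\pi\wedge\pi=\pi\wedge(-1)$ and $a^{-1}\wedge b=-a\wedge b$ repeatedly, as a symbol having $-1$ as one of its entries; since $\mathfrak{g}(-1)=0$, such terms vanish after applying $T_{m-1}$. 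Only the leading term $\pi^{-1}\wedge w_{2}\wedge\cdots\wedge w_{m}=-\pi\wedge w_{2}\wedge\cdots\wedge w_{m}$ survives, with tame symbol $-\overline{w}_{2}\wedge\cdots\wedge\overline{w}_{m}$. Under $\partial^{0}$, the restriction $\overline{w}_{j}=z_{j}/z_{1}|_{D_{0}}$ corresponds to $w_{j-1}/w_{0}$, so $T_{m-1}$ of this residue equals $-(\partial^{0})^{\ast}G_{m-1}$, and the overall minus sign in~\eqref{eq:8} produces the contribution $(\partial^{0})_{\ast}[G_{m-1}]=(-1)^{0}(\partial^{0})_{\ast}[G_{m-1}]$, as required.

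Summing over $i=0,1,\dots,m$ yields the stated formula. The main obstacle is the bookkeeping in the $i=0$ case, where one must systematically use the Milnor relation $\pi\wedge\pi=\pi\wedge(-1)$ together with the vanishing $\mathfrak{g}(-1)=0$ to discard the non-leading terms of the multilinear expansion.
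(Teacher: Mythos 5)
Your proposal is correct and follows essentially the same route as the paper: apply Proposition~\ref{prop:dif} (equation~\eqref{eq:8}) to the functions $z_{j}/z_{0}$ and compute the residues at the divisors $D_{0},\dots,D_{m}$, the only point needing care being $D_{0}$. The paper handles that case by the one-line identity $\frac{z_1}{z_0}\wedge\cdots\wedge\frac{z_m}{z_0}=\frac{z_1}{z_0}\wedge\frac{z_2}{z_1}\wedge\cdots\wedge\frac{z_m}{z_1}$ in $\wedge^{m}\C(\P^{m})^{\times}$ (where the cross terms die because $f\wedge f=0$ in the exterior power, so the detour through $\pi\wedge\pi=\pi\wedge(-1)$ and $\mathfrak{g}(-1)=0$ is not needed), which is exactly what your multilinear expansion in the uniformizer $\pi=z_{0}/z_{1}$ amounts to.
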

\begin{proof}
  Let $D_{i}\subset \P^{m}$ denote the divisor of equation
  $z_{i}=0$. The result follows directly from the relations
  \begin{displaymath}
    \RES_{D_{0}}\left(\frac{z_{1}}{z_{0}}
    \land\dots\land\frac{z_{m}}{z_{0}}\right)
    =\RES_{D_{0}}\left(\frac{z_{1}}{z_{0}}
    \land\frac{z_{2}}{z_{1}}\land\dots\land\frac{z_{m}}{z_{1}}\right)
    =-\left(\frac{z_{2}}{z_{1}}\land\dots\land\frac{z_{m}}
      {z_{1}}\right)\Big|_{D_{0}}
  \end{displaymath}
  and
  \begin{displaymath}
    \RES_{D_{i}}\left(\frac{z_{1}}{z_{0}}
    \land\dots\land\frac{z_{m}}{z_{0}}\right)
    =(-1)^{i-1}\left(\frac{z_{1}}{z_{0}}
    \land\dots\land\widehat{\frac{z_{i}}{z_{0}}}\land\dots\land
    \frac{z_{m}}{z_{0}}\right)\Big|_{D_{i}}.
  \end{displaymath}
\end{proof}


\section{Algebraic cycles and the Beilinson regulator}\label{regulator2}
In this section we compare the regulator defined by Goncharov to its
cubical version, and show that the cubical
version agrees with the Beilinson regulator, by comparing it to the
construction given by Burgos and Feliu in \cite{BFChow}.

Throughout this section, $X$ will be an equidimensional compact complex
algebraic manifold.

\medskip
\subsection{Goncharov's conjecture} Let $\mathcal D_{D}^{*}(X,p)$ be
the Deligne complex of currents of \S \ref{sec:currents}. We denote by
 $\tau \mathcal D_{D}^{*}(X,p):=\tau_{\le 2p} \mathcal D_{D}^{*}(X,p)$
 the truncated complex.

For each integer $m\ge 0$, let $\pi_X:X\times \Delta^m \rightarrow X$
be the projection onto the first factor. To simplify the notation we
will use the same symbol for these morphisms. In each case it will be
clear which one is used.
Let $z_0,\dots,z_m$ be projective coordinates of
$\Delta^m=\P^m\setminus H_{m}$. For a closed integral subscheme
$Z\subset X\times \Delta^m$ of codimension $p$ which intersects
properly with each face of $ X\times \Delta^m$, let $\overline Z$ be
the Zariski closure of $Z$ on $X\times \P^{m}$ and let
$\iota:\widetilde Z\longrightarrow X\times \P^{m}$ be the composition
of a resolution of singularities of $\overline Z$ with the inclusion
of $\overline Z$ in $ X\times \P^{m}$. We define the current
$\mathcal P_{s}^p(m)(Z)\in
\tau \mathcal D_{D}^{2p-m}(X,p)$ by
\begin{eqnarray*}
\mathcal P_{s}^p(m)(Z) & = & (\pi_X)_* \iota_{\ast}
\left[T_m\left(\left.\frac{z_1}{z_0}\right|_{\widetilde Z}, \ldots ,
\left.\frac{z_m}{z_0}\right|_{\widetilde Z}\right)\right] \\
& = &(\pi_X)_*(\delta_Z\wedge G_m),
\end{eqnarray*}
where $\delta_Z$ is the current integration along $Z$.
Let $$
\mathcal P_{s}:Z_s^{p}(X, m)\longrightarrow
\tau \mathcal D_{D}^{2p-m}(X,p)
$$
be defined by $\mmP_s(Z)=\mathcal P_{s}^p(m)(Z)$ if $Z$ is as above,
and extended to cycles
$z\in Z_s^{p}(X, *)$ by linearity. Observe that if $m=0$,
$\mathcal{P}_{s}(z)=\delta _{z}$ is a closed current and therefore it
belongs to the truncated complex. Remember that
we are including the twist in the definition of the current associated
to a differential form and the definition of the current associated to
an algebraic cycle.

Theorem \ref{thm:gdif} implies that
$d_{\mathcal D}\left(\mathcal P^p(m)(z)\right)
=\mathcal P^p(m-1)(\partial z)$ (see also
\cite{Goncharov}, Theorem 2.12).
Hence, we have the following result.
\begin{lemma}
  The morphism $\mmP_s$ is a chain morphism.
\end{lemma}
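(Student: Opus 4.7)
The plan is to verify the identity $d_{\mathcal{D}}\mathcal{P}_s(Z)=\mathcal{P}_s(\partial Z)$ on each generator $Z\in Z_s^p(X,m)$ and then extend by linearity. To begin, I would use that the proper push-forward $\pi_X\circ\iota$ commutes with $d_{\mathcal{D}}$ on currents (possibly up to a sign that is trivial in this degree), so that
\begin{displaymath}
d_{\mathcal{D}}\mathcal{P}_s(Z)=(\pi_X)_{\ast}\iota_{\ast}\, d_{\mathcal{D}}\bigl[T_m\bigl(\tfrac{z_1}{z_0}\big|_{\widetilde Z},\dots,\tfrac{z_m}{z_0}\big|_{\widetilde Z}\bigr)\bigr].
\end{displaymath}
This reduces the claim to computing the differential of a single Goncharov-type current on the resolved variety $\widetilde Z$, with respect to the pull-backs of the rational functions $z_i/z_0$.

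The core of the argument is then an application of Theorem \ref{thm:gdif} (equivalently, of equation \eqref{eq:8} of Proposition \ref{prop:dif}) to these restrictions. That result expresses $d_{\mathcal{D}}[T_m]$ as a signed sum over $i=0,\dots,m$ of residues along the strict transforms of the coordinate hyperplanes $D_i=\{z_i=0\}$, using precisely the residue formulas worked out in the proof of Theorem \ref{thm:gdif}. The hypothesis that $Z$ meets every face of $X\times \Delta^m$ properly is exactly what identifies these residue currents with the cycles $\partial_i Z$ defined by intersection with the faces $X\times \partial^i(\Delta^{m-1})$: proper intersection guarantees that $\overline Z\cap (X\times D_i)$ is equidimensional of codimension $p+1$ in $X\times \P^m$ with the correct multiplicities coming from the valuations used to define $\RES$. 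Assembling the contributions,
\begin{displaymath}
d_{\mathcal{D}}\mathcal{P}_s(Z)=\sum_{i=0}^m(-1)^i\mathcal{P}_s(\partial_i Z)=\mathcal{P}_s(\partial Z),
\end{displaymath}
since the Bloch differential on $Z_s^p(X,\ast)$ is $\partial=\sum_{i=0}^m(-1)^i\partial_i$.

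The main point requiring care is that the residue formula \eqref{eq:8} is proved in the normal crossing situation and then extended by resolution of singularities; one has to check that the extra exceptional strata introduced by the resolution of $\overline Z$ contribute zero after push-forward by $\pi_X\circ\iota$, because they map to subvarieties of strictly smaller dimension than expected. A secondary and easy verification is that the image remains inside the truncation $\tau_{\le 2p}\mathcal{D}_D^{\ast}(X,p)$: for $m\ge 1$ the relevant degree $2p-m$ is strictly below $2p$ so no truncation issue arises, while for $m=0$ the current $\delta_z$ is $d_{\mathcal{D}}$-closed and $\mathcal{P}_s(\partial z)=0$ trivially.
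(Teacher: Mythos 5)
Your argument is correct and follows essentially the same route as the paper, which simply invokes Theorem \ref{thm:gdif} (i.e.\ Proposition \ref{prop:dif}, equation \eqref{eq:8}, applied to the restrictions of the $z_i/z_0$) together with the identification, via proper intersection with the faces, of the residue currents with the cycles $\partial_i Z$. Your write-up just makes explicit the steps the paper leaves implicit, including the compatibility of proper push-forward with $d_{\mathcal{D}}$ and the truncation check.
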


Goncharov has presented the following conjecture:

\vskip 1pc
\begin{conj} \label{goncharovconj} Let $X$ be an  equidimensional
  compact complex algebraic manifold.
The composition
$$ K_n(X)_{\Q} \xrightarrow{\cong} \bigoplus_{p\geq 0}
CH^{p}_{s}(X,n)_{\Q} \xrightarrow{\mathcal P_s} H_{\mathcal D}^{2p-n}(X,p)
$$
agrees with Beilinson's regulator.
\end{conj}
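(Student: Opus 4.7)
The plan is to reduce Goncharov's conjecture to Theorem \ref{beichow} by constructing a \emph{cubical} analogue of Goncharov's regulator built from Wang's currents $[W_m]$ (rather than the simplicial currents $[G_m]$), and then showing that this cubical regulator coincides, at the level of the derived category, with the Burgos--Feliu regulator $\rho$. Since $\rho$ induces Beilinson's regulator by Theorem \ref{beichow}, and since the cubical-simplicial comparison of Chow groups is well understood, this suffices.

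First I would introduce
\begin{displaymath}
  \mathcal{P}_c : Z^p_c(X,*)_0 \longrightarrow \tau\mathcal{D}_D^{2p-*}(X,p),\qquad
  \mathcal{P}_c(Z)=(\pi_X)_{\ast}(\delta_Z\wedge [W_m])
\end{displaymath}
for an irreducible cycle $Z\subset X\times\square^m$ meeting all faces properly, with $\pi_X:X\times(\P^1)^m\to X$ the projection. Theorem \ref{thm:wdif} and the fact that $\delta_Z$ is closed imply that $\mathcal{P}_c$ is a chain map (the cubical faces of $Z$ correspond exactly to the pushforwards along the divisors $D^i_j$ in the expression for $d_{\mathcal{D}}[W_m]$, and the normalization condition $\delta^1_i=0$ together with Proposition \ref{prop:vanish} makes $\mathcal{P}_c$ vanish on degenerate cycles). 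Then I would verify that under Levine's isomorphism $CH^p_c(X,n)\cong CH^p_s(X,n)$ the two regulators $\mathcal{P}_c$ and $\mathcal{P}_s$ induce the same map in cohomology; this is a natural compatibility, provable either by constructing an explicit chain homotopy from the product triangulation of $\square^m$ into copies of $\Delta^m$ (matching Wang's $W_m$ with pieces of $G_m$), or, more economically, by invoking the uniqueness of regulators on cubical/simplicial presentations of the same cohomological functor together with Theorem \ref{thm:gdif} and Theorem \ref{thm:wdif}.

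The core step is then the comparison of $\mathcal{P}_c$ with the Burgos--Feliu regulator. Recall that $\rho$ is represented in the derived category by the zig-zag
\begin{displaymath}
 Z^p_c(X,*)_0 \xrightarrow{f_1} \mathcal{H}^p(X,*)_0 \xleftarrow{g_1}
  \mathcal{D}^{2p-*}_{\A,\mathcal{Z}^p}(X,p)_0 \xrightarrow{\rho} \mathcal{D}_{\A}^{2p-*}(X,p)_0.
\end{displaymath}
To identify this with $\mathcal{P}_c$, I would use Wang's forms to lift the cycle class $f_1(Z)=\operatorname{cl}(Z)\in\mathcal{H}^p(X,m)_0$ to an explicit element
$((\omega_m,g_m),\dots,(\omega_0,g_0))\in\mathcal{D}^{2p-m}_{\A,\mathcal{Z}^p}(X,p)_0$, where the Green data $g_i$ on $X\times\square^i\setminus Z_i$ is built from $(\pi_X)_{\ast}(\delta_Z\wedge W_i)$ (with appropriate faces of $Z$), while the $\omega_i$ are obtained by applying Proposition \ref{prop:dif} and the integration-along-cycle map; Lemma \ref{lemm:1} then characterizes $\rho(Z)$ as the tuple of $\omega_i$'s. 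Applying the quasi-isomorphism \eqref{eq:14} from forms to currents, the resulting representative agrees, by construction, with $\mathcal{P}_c(Z)$ up to the coboundary arising from the components $(\omega_i,g_i)$ for $i<m$, which lie in degrees below $2p$ and therefore vanish after projection into $\tau\mathcal{D}_D^{2p-*}(X,p)$. This identifies $\rho$ with $\mathcal{P}_c$ in cohomology.

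I expect the main technical obstacle to be the construction of the lift in $\mathcal{D}^{2p-m}_{\A,\mathcal{Z}^p}(X,p)_0$ and the verification that its differential equation matches the cocycle condition: one must show that Wang's forms pulled back to (a resolution of) the cycle $Z$ and pushed forward to $X\times\square^i$ yield Green-type data compatible with all cubical face maps, handling carefully the logarithmic singularities along the compactifying divisor $D^i$ (where Proposition \ref{prop:vanish} is crucial for convergence and compatibility with the normalization) and the singularities along $Z\cap$ faces. Once this bookkeeping is done, Theorem \ref{beichow} applied to the resulting commutative diagram concludes the proof of the conjecture.
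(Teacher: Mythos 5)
Your overall architecture --- a cubical regulator $\mmP_c$ built from Wang's currents, a comparison of $\mmP_c$ with the Burgos--Feliu map $\rho$, and a cubical/simplicial comparison --- is exactly the paper's, and your definition of $\mmP_c$ together with the proof that it is a chain map (via Theorem \ref{thm:wdif} and Proposition \ref{prop:vanish}) matches Lemma \ref{pcmodcom}. However, both of your core comparison steps have genuine gaps. For the comparison of $\mmP_c$ with $\rho$, you propose to lift $\cl(Z)\in\mcH^p(X,m)_0$ to an explicit cocycle $((\omega_m,g_m),\dots,(\omega_0,g_0))\in\mmD^{2p-m}_{\A,\mathcal{Z}^p}(X,p)_0$ whose Green data is ``built from $(\pi_X)_*(\delta_Z\wedge W_i)$''. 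This does not typecheck: each $g_n$ must be a differential form with logarithmic singularities on $X\times\square^n\setminus\mathcal{Z}^p_n$, whereas $(\pi_X)_*(\delta_Z\wedge W_i)$ is a current on $X$; no such explicit lift is constructed (its existence is only guaranteed abstractly by the quasi-isomorphism $g_1$). Moreover, your claim that the components with $i<m$ ``vanish after projection into $\tau\mmD_D^{2p-*}(X,p)$'' is false: after pairing with $W_i$ and pushing forward, every component contributes in total degree $2p-m\le 2p$, so nothing is killed by the truncation. The paper avoids both problems by going in the opposite direction: it builds a chain map $\psi$ from the mapping-cone complex $\mmD^{2p-*}_{\A,\mcH}(X,p)_0$ to $\tau\mmD_D^{2p-*}(X,p)$, sending $(z,(\omega,g),\alpha)$ to $\mmP_c(z)-(\pi_X)_*[g\bullet W_m]+\varphi(\alpha)$ (Theorem \ref{thm:morcomp}); the only analytic input is the local integrability of $g\bullet W_m$ and the current equations of Propositions \ref{qinvsing1} and \ref{qinvsing2}, and no explicit lift is ever needed.

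The cubical/simplicial comparison is also left open in your proposal. Neither of your two suggestions is workable as stated: there is no ``uniqueness of regulators'' principle available in this setting, and a triangulation of $\square^m$ into simplices would require constructing and controlling an explicit chain homotopy between $[W_m]$ and a signed sum of pullbacks of $[G_m]$, which you do not supply. The paper instead interpolates through the mixed cubical-simplicial cycle complex $Z^p_{cs}(X,*,*)_0$ and the forms $M_{n,m}=T_{n+m}(y_1/x_1,\dots,y_n/x_n,z_1/z_0,\dots,z_m/z_0)$, which specialize to $W_n$ when $m=0$ and to $G_m$ when $n=0$; the inclusions $i_s,i_c$ are quasi-isomorphisms by Levine's theorem (Proposition \ref{isic}) and both triangles commute on the nose (Lemma \ref{commutativediagrams}), so $\mmP_s$ and $\mmP_c$ are identified without any homotopy. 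You would need to supply an argument of comparable substance to close this step before invoking Theorems \ref{beichow} and \ref{rhop}.
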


\subsection{Cubical construction}
We introduce here the cubical version of Goncharov's regulator using
Wang's forms.

Let $\pi_X:X\times \square^m \rightarrow X$ be here the projection
onto the first factor. Let $(x_i:y_i)$ be homogeneous coordinates on
the $i$-th factor of $(\P^{1})^{m}$.
 For a closed integral subscheme
$Z\subset X\times \square^m$ of codimension $p$ which intersects
properly with each face of $ X\times \square^m$, let $\overline Z$ be
the Zariski closure of $Z$ on $X\times (\P^{1})^{m}$ and let
$\iota:\widetilde Z\longrightarrow X\times (\P^{1})^{m}$ be the composition
of a resolution of singularities of $\overline Z$ with the inclusion
of $\overline Z$ in $ X\times (\P^{1})^{m}$. We define the current
$\mathcal W^{p}(m)(Z)\in
\mathcal \tau D_{D}^{2p-m}(X,p)$ by
\begin{eqnarray*}
\mathcal W^p(m)(Z) & = & (\pi_X)_* \iota_{\ast}
\left[T_m\left(\left.\frac{y_1}{x_1}\right|_{\widetilde Z}, \ldots ,
\left.\frac{y_m}{x_m}\right|_{\widetilde Z}\right)\right] \\
& = &(\pi_X)_*(\delta_Z\wedge W_{m}).
\end{eqnarray*}
This gives a map
\begin{equation}
Z^p_c(X,m)_0  \xrightarrow{\mmP_c}  \tau \mmD_{D}^{2p-m}(X,p)
\label{pc}
\end{equation}
defined by $\mmP_c(Z)=\mathcal{W}^{p}(m)(Z)$,  if $Z$ is as above,
and extended to cycles
$z\in Z_c^{p}(X, *)_0$ by linearity.


\begin{lemma}\label{pcmodcom}
  The morphism $\mmP_c$ is a chain morphism.
\end{lemma}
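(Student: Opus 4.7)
The plan is to reduce the chain-map identity $d_{\mathcal{D}}\mmP_{c}=\mmP_{c}\partial$ to equation \eqref{eq:8} of Proposition \ref{prop:dif}, applied in the ambient variety $X\times(\P^{1})^{m}$. For an irreducible generator $Z$ of $Z^{p}_{c}(X,m)_{0}$, I would write $\mmP_{c}(Z)=(\pi_{X})_{\ast}\iota_{\ast}[T_{m}(f_{1},\dots,f_{m})]$ with $f_{i}=(y_{i}/x_{i})|_{\widetilde{Z}}$. Because both $\pi_{X}:X\times(\P^{1})^{m}\to X$ and $\iota:\widetilde{Z}\to X\times(\P^{1})^{m}$ are proper, $d_{\mathcal{D}}$ commutes with these pushforwards, so the computation reduces to applying \eqref{eq:8} with subvariety $\bar{Z}$:
\[
d_{\mathcal{D}}\,\iota_{\ast}[T_{m}(f_{1},\dots,f_{m})] = -\sum_{W\in\bar{Z}^{(1)}}(\iota_{W})_{\ast}\bigl[T_{m-1}(\RES_{W}(f_{1},\dots,f_{m}))\bigr].
\]

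I would next identify which $W\in\bar{Z}^{(1)}$ contribute to the residue sum. Since $f_{i}=y_{i}/x_{i}$ has divisor $D^{i}_{0}-D^{i}_{1}$ on $X\times(\P^{1})^{m}$, the only components yielding a non-trivial residue are those of $\bar{Z}\cap(X\times D^{i}_{j})$. The proper intersection condition built into $Z^{p}_{c}(X,m)$ guarantees that each such intersection has codimension one in $\bar{Z}$, and that its restriction to $X\times\square^{m-1}$ is precisely the cubical face cycle $\delta^{i}_{j}Z$. On such a component $W$, exactly one of the functions $f_{k}$ has nonzero valuation, equal to $(-1)^{j}$; the tame-symbol residue is therefore $(-1)^{i-1+j}$ times the restrictions of the remaining $f_{k}$'s to $W$. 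Pushing forward to $X$, the contribution from the pair $(i,j)$ becomes $(-1)^{i+j}\mmP_{c}(\delta^{i}_{j}Z)$, mirroring the universal calculation of Theorem \ref{thm:wdif}.

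Assembling signs yields
\[
d_{\mathcal{D}}\mmP_{c}(Z) = \sum_{i=1}^{m}\sum_{j=0,1}(-1)^{i+j}\mmP_{c}(\delta^{i}_{j}Z).
\]
On the normalized subcomplex $Z^{p}_{c}(X,m)_{0}=\bigcap_{i}\ker\delta^{1}_{i}$ the $j=1$ terms vanish, leaving $\sum_{i=1}^{m}(-1)^{i}\mmP_{c}(\delta^{0}_{i}Z)=\mmP_{c}(\partial Z)$, which is the required chain-map identity.

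The main obstacle will be to justify the application of Proposition \ref{prop:dif} when $\bar{Z}$ is singular and its intersections with the face divisors $X\times D^{i}_{j}$ are not transverse. This is handled exactly as in the last line of the proof of Proposition \ref{prop:dif}, by passing to a resolution of singularities of $\bar{Z}$ on which $\bigcup_{i,j}\iota^{-1}(X\times D^{i}_{j})$ becomes a normal crossings divisor. A more streamlined alternative is to pull back the universal identity of Theorem \ref{thm:wdif} along $\iota$, which bypasses the direct residue bookkeeping entirely at the cost of a short compatibility check with the pullback of Wang's current.
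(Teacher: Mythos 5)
Your argument is correct and is essentially the paper's: the paper disposes of the lemma by citing Theorem \ref{thm:wdif}, whose proof is exactly the residue computation you carry out (via equation \eqref{eq:8} of Proposition \ref{prop:dif}, with the signs $(-1)^{i+j+1}$ for the residues of $\tfrac{y_1}{x_1}\land\dots\land\tfrac{y_m}{x_m}$ along the face divisors and the vanishing of the $j=1$ faces on the normalized complex). Your ``streamlined alternative'' of pulling back the universal identity is precisely the route the paper's one-line proof takes, so there is nothing substantively different to compare.
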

\begin{proof}
This follows from Theorem \ref{thm:wdif}.
\end{proof}
We denote also by
$$\mmP_{c}: CH^p_c(X,n) \longrightarrow H_{\mmD}^{2p-n}(X,\R(p))$$
the induced morphism.

\subsection{Comparison of $\mathcal{P}_{c}$ and $\rho $} We will now
compare
the map
$\mmP_{c}$ with Beilinson's regulator.
\begin{thm}\label{qinverse}
  \begin{enumerate}
  \item \label{item:4} Given any differential form $\alpha \in
    \mathcal{D}^{r,-m}_{\A}(X,p)$, the form $\alpha\bullet W_{m}$ is
    locally integrable as a singular form on $X \times
    (\P^{1})^{m}$. Hence it defines a current $[\alpha\bullet
    W_{m}]\in \mathcal{D}_{D}^{r+m}(X\times
    (\P^{1})^{m},p+m)$. Moreover
    \begin{displaymath}
      d_{\mathcal{D}}[\alpha \bullet W_{m}]=
      [d_{\mathcal{D}}\alpha\bullet W_{m}]
      +(-1)^{r}\sum_{j=0,1}\sum_{i=1}^{m}(-1)^{i+j}(\delta
      ^{i}_{j})_{\ast}[(\delta
      ^{i}_{j})^{\ast}\alpha \bullet W_{m-1}]
    \end{displaymath}
  \item \label{item:5} The assignment $\alpha \longmapsto
    (\pi_{X})_{\ast}[\alpha\bullet
    W_{m}]$ defines a morphism of complexes
    \begin{displaymath}
      \mmD_{\A}^*(X,p)\xrightarrow{\varphi} \tau \mmD_{D}^{*}(X,p).
    \end{displaymath}
    Hence, by composition, a morphism of complexes
    \begin{displaymath}
       \mmD_{\A}^*(X,p)_{0}\xrightarrow{\varphi} \tau \mmD_{D}^{*}(X,p).
    \end{displaymath}
  \item \label{item:6} If we identify $\tau \mmD^{*}(X,p)$ with a
    subcomplex of $\tau \mmD_{D}^{*}(X,p)$ via the morphism (\ref{eq:14}),
    then the image of
    $\varphi $ is contained in $\tau \mmD^{*}(X,p)$. By abuse of notation
    we will also denote by $\varphi$ the induced morphism
    \begin{displaymath}
      \mmD_{\A}^*(X,p)_0 \xrightarrow{\varphi} \tau \mmD^{*}(X,p).
    \end{displaymath}
  \item \label{item:7} The morphism $\varphi$ is a quasi-inverse of
    the quasi-isomorphism given in Proposition
    \ref{difaffine},~\ref{item:8}.
  \end{enumerate}
\end{thm}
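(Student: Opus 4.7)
My plan is to prove the four parts in sequence, with (\ref{item:4}) supplying the main analytic input and the rest following either formally or by a standard argument. For (\ref{item:4}), $\alpha$ has logarithmic singularities along $D^m$ (the divisor where some coordinate equals $1$), while $W_m$ has logarithmic singularities along the cubical faces where some coordinate equals $0$ or $\infty$; together these form a normal crossing divisor on $X \times (\P^{1})^{m}$, so the product $\alpha \bullet W_m$ is locally integrable. For the differential formula I would apply Stokes' theorem on the complement of $\varepsilon$-tubular neighborhoods of the singular divisors and let $\varepsilon \to 0$. Away from the divisors, the smooth Leibniz rule combined with $d_{\mathcal{D}} W_m = 0$ as a form (Proposition \ref{prop:dif}, equation (\ref{eq:6})) yields the bulk contribution $[d_{\mathcal{D}} \alpha \bullet W_m]$, while the boundary contributions along $\{0\}$ and $\{\infty\}$ produce the residue terms by exactly the residue computation that proves Theorem \ref{thm:wdif}. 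The key geometric input is Proposition \ref{prop:vanish}: since $W_m$ restricts to zero on $D^m$, the residue of $\alpha \bullet W_m$ along $D^m$ vanishes, leaving only the cubical face terms $(-1)^{r}(\delta^{i}_{j})_{\ast}[(\delta^{i}_{j})^{\ast}\alpha \bullet W_{m-1}]$ with the appropriate alternating signs.

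Part (\ref{item:5}) is then formal: the proper pushforward $(\pi_X)_{\ast}$ along $X \times (\P^{1})^{m} \to X$ commutes with $d_{\mathcal{D}}$ on currents (no extra boundary appears because the fibers $(\P^1)^m$ are closed), and applying it to the formula of (\ref{item:4}) gives
\[
d_{\mathcal{D}} \varphi(\alpha) = \varphi(d_{\mathcal{D}} \alpha) + (-1)^{r}\sum_{i,j}(-1)^{i+j}\varphi((\delta^{i}_{j})^{\ast} \alpha).
\]
On the normalized complex $(\delta^{i}_{1})^{\ast} \alpha = 0$, so the right-hand sum collapses to $(-1)^{r}\varphi(\delta \alpha)$, which matches the total differential on $\mathcal{D}_{\A}^{\ast}(X,p)_{0}$.

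For (\ref{item:6}) I would appeal to the classical fact that fiber integration along a smooth proper morphism, of a form with logarithmic singularities along a horizontal normal crossing divisor, produces a smooth form on the base; here the base is $X$, the fibers are $(\P^{1})^{m}$, and the divisor $X \times \{0,1,\infty\}^{m}$ is horizontal over $X$, so $\varphi(\alpha)$ is represented by a smooth form and lies in $\tau\mathcal{D}^{\ast}(X,p)$. Finally, for (\ref{item:7}), the composition of $\varphi$ with the inclusion $\iota$ of Proposition \ref{difaffine}(\ref{item:8}) sends $\alpha \in \tau \mathcal{D}_{\log}^{\ast}(X,p) = \mathcal{D}_{\A}^{\ast,0}(X,p)_{0}$ to $(\pi_X)_{\ast}[\alpha \bullet W_0] = \alpha$, since $W_0 = 1$ and $\pi_X = \mathrm{id}_X$ when $m = 0$. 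Hence $\varphi \circ \iota = \mathrm{id}$, and since $\iota$ is a quasi-isomorphism by Proposition \ref{difaffine}(\ref{item:8}), it follows that $\varphi$ is its quasi-inverse.

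The principal obstacle is (\ref{item:4}): making the Stokes-type integration by parts rigorous in the $\varepsilon$-limit with correct signs and orientations, and carefully invoking Proposition \ref{prop:vanish} to eliminate the otherwise spurious residue of $\alpha \bullet W_m$ along $D^m$ so that only the cubical face residues survive.
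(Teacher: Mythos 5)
Your route is essentially the paper's: the paper disposes of (\ref{item:4}) and (\ref{item:6}) by invoking Proposition \ref{prop:vanish} together with the integration\mbox{-}by\mbox{-}parts and fibre\mbox{-}integration techniques of \cite{Burgos:Gftp} \S 3, proves (\ref{item:5}) by exactly the push-forward computation you outline, and proves (\ref{item:7}) by your observation that $\varphi\circ\psi=\mathrm{id}$ because $W_{0}=1$. However, one step of your argument for (\ref{item:4}) would fail as written. You claim that local integrability of $\alpha\bullet W_{m}$ follows because the singular loci of $\alpha$ and of $W_{m}$ together form a normal crossing divisor. That principle is false: a form with logarithmic singularities along a normal crossing divisor need not be locally integrable (already $\frac{dz}{z}\wedge\frac{d\bar z}{\bar z}$ fails in one variable), and a general $\alpha\in E^{r}_{\log}$ contains such terms along $D^{m}$. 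What actually saves the product near $D^{m}$ is the structure of $W_{m}$: every monomial of $W_{m}$ contains, for each index $i$, either the factor $\log|f_{i}\bar f_{i}|$, which vanishes as $f_{i}\to 1$ (this is the content of Proposition \ref{prop:vanish}), or a factor proportional to $df_{i}$ or $d\bar f_{i}$, which annihilates the non-integrable part $\frac{df_{i}\wedge d\bar f_{i}}{|f_{i}-1|^{2}}$ of $\alpha$ upon wedging. So Proposition \ref{prop:vanish} is needed twice --- once for integrability and once, as you do use it, to kill the residue along $D^{m}$ --- and the integrability itself requires the term-by-term analysis of \cite{Burgos:Gftp} \S 3 rather than a general position statement.

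A smaller logical slip in (\ref{item:5}): the theorem first asserts that $\varphi$ is a chain map on the \emph{un-normalized} complex $\mathcal{D}^{*}_{\A}(X,p)$, whose cubical differential is $\delta=\sum_{i}(-1)^{i}(\delta^{0}_{i}-\delta^{1}_{i})=\sum_{i,j}(-1)^{i+j}\delta^{j}_{i}$; hence the sum produced by (\ref{item:4}) is already $(-1)^{r}\varphi(\delta\alpha)$ with no appeal to normalization, and the normalized statement follows by restriction. Your argument, which uses $(\delta^{i}_{1})^{*}\alpha=0$, establishes only the normalized case and inverts the intended logical order. This is cosmetic; the computation is otherwise the one in the paper, and your treatments of (\ref{item:6}) and (\ref{item:7}) coincide with the paper's.
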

\begin{proof}
 Statements \ref{item:4} and \ref{item:6} follow easily from Proposition
  \ref{prop:vanish} and the definition of $ \mmD_{\A}^*(X,p)_0 $, by
  using the techniques of \cite{Burgos:Gftp} \S 3.
  We next prove that $\varphi$ is a morphism of complexes. Let $\alpha \in
  \mmD_{\A}^{r,-m}(X,p)$. Then
  \begin{align*}
    \varphi(d \alpha )&=\varphi(d_{\mathcal{D}}\alpha)+
    (-1)^{r}\sum_{j=0,1}\sum_{i=1}^{m}(-1)^{i+j}\varphi((\delta
    ^{i}_{j})^{\ast}\alpha )\\
    &=(\pi _{X})_{\ast}[d_{\mathcal{D}}\alpha\bullet W_{m}]
    +(-1)^{r}\sum_{j=0,1}\sum_{i=1}^{m}(-1)^{i+j}(\pi _{X})_{\ast}([(\delta
    ^{i}_{j})^{\ast}\alpha \bullet W_{m-1}])\\
    &=(\pi _{X})_{\ast}(d_{\mathcal{D}}([\alpha\bullet W_{m}])
    )\\
    &=d_{\mathcal{D}}\varphi(\alpha ).
  \end{align*}
  For statement \ref{item:7}, if we denote by $\psi $ the
  quasi-isomorphism of  Proposition
    \ref{difaffine},~\ref{item:8}, then, by definition, the
    composition $\varphi\circ
    \psi $ is the identity.
\end{proof}

Let $(\omega ,g)\in
\mathcal{D}^{2p,-m}_{\A,\mathcal{Z}^{p}}(X,p)_{0}$.
Since we have defined
$\mathcal{D}^{\ast}_{\A,\mathcal{Z}^{p}}(X,p)_{0}$ using truncated
complexes, the pair $(\omega ,g)$ is closed. Moreover, by the purity
property of Deligne cohomology, there exists a cycle $z\in
Z^{p}(X,m)_{0}\otimes \R$ such that $[(\omega ,g)]=\cl(z)$. Let
$\delta _{z}$ be the associated current. Since the set where $g$ is
singular
has codimension $p$, \cite{Burgos:Gftp} Corollary 3.8
implies that $g$ is locally integrable on $X\times
\square^{m}$. Then, $\omega$ and $g$ determine currents on
the Deligne complex $$\mathcal{D}^{\ast}(\mathscr{D}_{X\times
  (\P^{1})^{m}/D^{m}}(X\times  (\P^{1})^{m}),p),$$
where $\mathscr{D}_{X\times
  (\P^{1})^{m}/D^{m}}$ is the sheaf of currents defined, for instance
in \cite{BurgosKuhnKramer} after Definition 5.43. Moreover, by
adapting the proof of \cite{Burgos:Gftp} Theorem 4.4, to the above
complex, one can prove that they satisfy the equation of currents
\begin{equation}
  \label{eq:17}
  d_{\mathcal D}[g]+\delta _{z}=[\omega].
\end{equation}

Using again the techniques of the proof of \cite{Burgos:Gftp} Theorem
4.4 one obtains
\begin{prop}\label{qinvsing1}
Let $(\omega ,g)\in
\mathcal{D}^{2p,-m}_{\A,\mathcal{Z}^{p}}(X,p)_{0}$.
Then, the differential form $g\bullet
  W_{m}$ is locally integrable  as a form on $X\times
  (\P^{1})^{m}$. Moreover,
  \begin{displaymath}
    d_{\mathcal{D}}[g\bullet W_{m}]=
    [\omega\bullet W_{m}]-\delta _{z}\bullet W_{m}
    -[\delta g \bullet W_{m-1}].
  \end{displaymath}
\ \hfill $\square$
\end{prop}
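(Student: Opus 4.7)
The plan is to treat the proposition as a combination of two results already in play: the Leibniz-type formula of Theorem \ref{qinverse}\,\ref{item:4}, which computes the differential of $[\alpha \bullet W_m]$ for a smooth form $\alpha$, and the current identity (\ref{eq:17}), which records the fact that $g$ is not smooth but has a polar singularity along $z$. Morally, the stated formula is what one obtains by formally applying the graded Leibniz rule to $d_\mathcal{D}[g \bullet W_m]$, using (\ref{eq:17}) to expand $d_\mathcal{D}[g]$ and Theorem \ref{thm:wdif} to expand $d_\mathcal{D}[W_m]$: the contribution $d_\mathcal{D}[g] \bullet W_m$ produces $[\omega\bullet W_m] - \delta_z\bullet W_m$, while $(-1)^{2p-1}[g] \bullet d_\mathcal{D}[W_m]$ produces $-[\delta g\bullet W_{m-1}]$ after applying the projection formula to the push-forwards $(\delta^i_j)_\ast$.

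To make this rigorous I would first establish local integrability of $g\bullet W_m$ on $X\times(\P^1)^m$. The form $g$ has logarithmic singularities along $D^m$ together with codimension $p$ polar singularities along $\mathcal{Z}^p_m$, while $W_m$ has purely logarithmic singularities along $\{x_i=0\}\cup\{y_i=0\}$ and, by Proposition \ref{prop:vanish}, vanishes on the remaining components $\{x_i=y_i\}$ of $D^m$. These combined singularities are exactly of the log-plus-codimension-$p$ type treated in \cite{Burgos:Gftp} Corollary 3.8, so $g\bullet W_m$ is locally integrable and defines a current on $X\times(\P^1)^m$.

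For the differential I would adapt the proof of \cite{Burgos:Gftp} Theorem 4.4. On the smooth locus one has
\[
d_\mathcal{D}(g\bullet W_m)=(d_\mathcal{D}g)\bullet W_m+(-1)^{2p-1}g\bullet d_\mathcal{D}W_m=\omega\bullet W_m,
\]
using $d_\mathcal{D}g=\omega$ off $\mathcal{Z}^p_m$ and that $d_\mathcal{D}W_m=0$ as a smooth form (Proposition \ref{prop:pd}\,\ref{item:3} applied to $u_i=\mathfrak{g}(y_i/x_i)$, for which each $d_\mathcal{D}u_i=0$ off the pole divisor). To go from this pointwise equality to an identity of currents, pair $d_\mathcal{D}[g\bullet W_m]$ against a test form, excise $\varepsilon$-tubular neighbourhoods of $\widetilde z$ and of each face divisor $\{x_i=0\}$, $\{y_i=0\}$, apply Stokes on the complement, and let $\varepsilon\to 0$. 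The interior integral converges to $[\omega\bullet W_m]$; the boundary around $\widetilde z$ yields $-\delta_z\bullet W_m$ by the very analysis that gives (\ref{eq:17}); and the boundaries around the face divisors reproduce the computation of Theorem \ref{qinverse}\,\ref{item:4}, now applied to the locally integrable $g$, contributing $-\sum_{i,j}(-1)^{i+j}(\delta^i_j)_\ast[(\delta^i_j)^\ast g\bullet W_{m-1}]=-[\delta g\bullet W_{m-1}]$. The principal technical obstacle is simultaneous control of these limits when $\widetilde z$ and $D^m$ meet, so that one must rule out residual ``corner'' contributions where the two kinds of singularities interact. Here Proposition \ref{prop:vanish} is essential: the vanishing of $W_m$ on the components $\{x_i=y_i\}$ of $D^m$ eliminates the potentially worst overlaps and allows the analysis of loc.\ cit.\ to carry over verbatim.
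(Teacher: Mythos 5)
Your proposal is correct and follows essentially the same route as the paper, which disposes of this proposition in one line by invoking ``the techniques of the proof of \cite{Burgos:Gftp} Theorem 4.4'' together with the current equation (\ref{eq:17}) and the local integrability supplied by \cite{Burgos:Gftp} Corollary 3.8. Your write-up simply makes explicit the Stokes/excision argument and the role of Proposition \ref{prop:vanish} that the paper leaves implicit, with the correct signs throughout.
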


Let now $(\omega ,g)\in
\mathcal{D}^{r,-m}_{\A,\mathcal{Z}^{p}}(X,p)_{0}$, with $r<2p$.
Again, since the set where $g$ is
singular
has codimension $p$, \cite{Burgos:Gftp} Corollary 3.8
implies that $g$ is locally integrable on $X\times
\square^{m}$. Then, $\omega$ and $g$ determine currents on
the Deligne complex $$\mathcal{D}^{\ast}(\mathscr{D}_{X\times
  (\P^{1})^{m}/D^{m}}(X\times  (\P^{1})^{m}),p).$$
Moreover they satisfy the equations of currents
\begin{equation}
  d_{\mathcal D}[g]=[d_{\mathcal{D}}g],\quad d_{\mathcal{D}}[\omega ]=
  [d_{\mathcal{D}}\omega ].
\end{equation}
and we have

\begin{prop}\label{qinvsing2}
Let $(\omega ,g)\in
\mathcal{D}^{r,-m}_{\A,\mathcal{Z}^{p}}(X,p)_{0}$, with $r<2p$.
    Then, the differential form $g\bullet
  W_{m}$ is locally integrable  as a form on $X\times
  (\P^{1})^{m}$. Moreover,
  \begin{displaymath}
    d_{\mathcal{D}}[g\bullet W_{m}]=
    [d_{\mathcal{D}}g\bullet W_{m}]+(-1)^{r-1}
    [\delta g \bullet W_{m-1}].
  \end{displaymath}
\ \hfill $\square$
\end{prop}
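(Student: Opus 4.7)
The plan is to prove both assertions by close analogy with Proposition \ref{qinvsing1}, using the machinery of \cite{Burgos:Gftp}, while exploiting the strict inequality $r<2p$ to eliminate the delta-cycle contribution that was present in the $r=2p$ case.

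First I would establish local integrability. By construction $g$ is a log form on $X\times\square^m$ whose singular locus is contained in some codimension-$p$ subvariety $Z\in\mathcal{Z}^p_m$, and $W_m$ is a log form on $(\P^1)^m$ with singularities along $D^m$. Pulling $g\bullet W_m$ back to a common log resolution of the pair $(X\times(\P^1)^m,Z\cup D^m)$ produces a form with log singularities along a normal crossing divisor, and Proposition \ref{prop:vanish} kills the a priori non-integrable contributions along $D^m$. Local integrability on $X\times(\P^1)^m$ then follows from Corollary 3.8 of \cite{Burgos:Gftp} applied to this pulled-back form, exactly as in the proof of \ref{qinvsing1}.

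For the differential, I would proceed by a formal Leibniz computation at the level of currents. Since $d_{\mathcal D}W_m=0$ as a classical form (Proposition \ref{prop:dif}), the Leibniz rule gives, formally,
\[
d_{\mathcal D}[g\bullet W_m]=[d_{\mathcal D}g\bullet W_m]+(\text{jump terms along }Z)+(\text{jump terms along }D^m).
\]
A jump contribution supported on $Z$ would involve a residue of degree $r-2p$; since $r<2p$ this residue is forced to vanish for degree reasons, so that contribution disappears — this is the one place where the hypothesis $r<2p$ enters, in contrast to the situation of \ref{qinvsing1} where exactly this term produces the $-\delta_z\bullet W_m$ appearing there. The jumps along $D^m$ are computed as in Theorem \ref{thm:wdif}, producing the cubical face contributions $(-1)^{r-1}\sum_{i,j}(-1)^{i+j}(\delta^i_j)_\ast[(\delta^i_j)^\ast g\bullet W_{m-1}]$. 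Since $(\omega,g)$ lies in the normalized complex the summands with $j=1$ vanish, and the sum collapses to $(-1)^{r-1}[\delta g\bullet W_{m-1}]$.

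The main obstacle is the rigorous justification of this formal Leibniz-and-jump computation in the simultaneous presence of log singularities of $g$ (along $Z$) and of $W_m$ (along $D^m$). This is handled exactly as in the proof of \cite{Burgos:Gftp} Theorem 4.4: after pulling everything back to a common compactification on which $Z\cup D^m$ becomes a normal crossing divisor, the relevant integration-by-parts argument goes through verbatim, and the degree-codimension bookkeeping confirms both the vanishing of the delta-along-$Z$ term and the identification of the delta-along-$D^m$ term with the cubical differential $\delta g$.
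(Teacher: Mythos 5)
Your proposal is correct and follows essentially the same route as the paper, which itself only sketches the argument: the preceding paragraph records that for $r<2p$ the purity/codimension argument gives $d_{\mathcal D}[g]=[d_{\mathcal D}g]$ with no cycle term (your ``degree reasons'' for the vanishing of the jump along $Z$), and the boundary contribution along $D^{m}$ together with local integrability is obtained exactly as you describe, from Proposition \ref{prop:vanish} and the techniques of \cite{Burgos:Gftp}, Corollary 3.8 and Theorem 4.4. No substantive difference.
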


Let $\mathcal{D}_{\A,\mathcal{H}}^{2p-\ast}(X,p)_{0}$ be the complex
\eqref{eq:18}. Then the central result of this subsection is the following.
\begin{thm}\label{thm:morcomp} The map
  \begin{displaymath}
    \psi :\mathcal{D}_{\A,\mathcal{H}}^{2p-\ast}(X,p)_{0}
    \longrightarrow \tau \mathcal{D}_{D}^{2p-\ast}(X,p)
  \end{displaymath}
  given by
  \begin{displaymath}
    \psi (z,(\omega ,g),\alpha )=\mathcal{P}_{c}(z)-
    (\pi _{X})_{\ast}[g\bullet W_{m}]+\varphi(\alpha ),
  \end{displaymath}
  when $(\omega ,g)\in
  \mathcal{D}^{r,-m}_{\A,\mathcal{Z}^{p}}(X,p)_{0}$, is a morphism of
  complexes. Moreover, there is a commutative diagram
  \begin{displaymath}
    \xymatrix{
      Z^{p}_{c}(X,\ast)_{0}\ar[r]^{\mathcal{P}_{c}}\ar[d]^{\rho }
      & \tau \mathcal{D}_{D}^{2p-\ast}(X,p)\ar@{=}[d]\\
      \mathcal{D}^{2p-\ast}_{\A,\mathcal{H}}(X,p)_{0}\ar[r]^{\psi}_{\sim}
      & \tau \mathcal{D}_{D}^{2p-\ast}(X,p)\\
      \mathcal{D}^{2p-n}_{\A}(X,p)_{0}\ar[u]_{\beta
      }^{\sim}\ar[r]_{\sim}^{\varphi}
      & \tau \mathcal{D}^{2p-\ast}(X,p)\ar[u]^{\sim}
    }
  \end{displaymath}
\end{thm}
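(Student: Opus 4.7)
The proof strategy has three distinct parts: verify that $\psi$ is a chain map, check that both squares of the diagram commute, and conclude that $\psi$ is a quasi-isomorphism by two-out-of-three. The commutativity of the squares is immediate from the definition of $\psi$ by plugging in zeros. Setting $(\omega,g)=0$ and $\alpha=0$ gives $\psi(f_{1}(z),0,0)=\mathcal{P}_{c}(z)$, confirming the upper square, and setting the first two coordinates to zero gives $\psi(0,0,\alpha)=\varphi(\alpha)$, confirming the lower square.

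The chain map property is the main content. Unwinding the differential on $\mathcal{D}_{\A,\mathcal{H}}^{2p-\ast}(X,p)_{0}=s(C)[-1]_{\ast}$, the image of a triple under $d$ combines (i) the internal differential in each factor, (ii) the cycle class $g_{1}(\omega,g)$ feeding into the $\mathcal{H}^{p}$-slot, and (iii) the projection $\rho(\omega,g)=\omega$ feeding into the $\mathcal{D}_{\A}$-slot. On the other side, $d_{\mathcal{D}}\psi(z,(\omega,g),\alpha)$ splits as $d_{\mathcal{D}}\mathcal{P}_{c}(z)-d_{\mathcal{D}}(\pi_{X})_{\ast}[g\bullet W_{m}]+d_{\mathcal{D}}\varphi(\alpha)$. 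The first term equals $\mathcal{P}_{c}(\partial z)$ by Lemma \ref{pcmodcom}, and the third equals $\varphi(d\alpha)$ by Theorem \ref{qinverse}(\ref{item:5}). The middle term must be analyzed bidegree by bidegree. At the top bidegree $r=2p$, Proposition \ref{qinvsing1} gives $d_{\mathcal{D}}[g\bullet W_{m}]=[\omega\bullet W_{m}]-\delta_{z}\bullet W_{m}-[\delta g\bullet W_{m-1}]$, whose three terms push forward under $(\pi_{X})_{\ast}$ to $\varphi(\omega)$, to $\mathcal{P}_{c}(z)$ (the piece matching the $\delta_{z}\bullet W_{m}$), and to a boundary contribution in lower cubical degree; in lower bidegrees $r<2p$, Proposition \ref{qinvsing2} yields the analogous formula without the $\delta_{z}$ term. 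Assembling these contributions, the $\delta_{z}\bullet W_{m}$ coming from the top bidegree cancels against $\mathcal{P}_{c}\bigl(f_{1}^{-1}(g_{1}(\omega,g))\bigr)$, the $[\omega\bullet W_{m}]$ piece matches $\varphi(\rho(\omega,g))$, and the $[\delta g\bullet W_{m-1}]$ contributions across consecutive bidegrees telescope to reconstruct $\psi$ applied to $d(\omega,g)$ in the simple complex $\mathcal{D}^{2p-\ast}_{\A,\mathcal{Z}^{p}}(X,p)_{0}$.

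Once $\psi$ is a chain map and $\psi\circ\beta=\varphi$ holds (from the lower square), the quasi-isomorphism of $\psi$ is formal: $\beta$ is a quasi-isomorphism by the general fact about simple complexes of diagrams containing the quasi-isomorphism $g_{1}$ (Proposition \ref{difaffine}(\ref{item:10})), and $\varphi$ is a quasi-isomorphism by Theorem \ref{qinverse}(\ref{item:7}), so $\psi$ must be too. The principal technical obstacle will be the sign and indexing bookkeeping when matching the two sides of the chain map identity: one must carefully align the pieces arising from the two separate regimes $r=2p$ and $r<2p$ in Propositions \ref{qinvsing1} and \ref{qinvsing2}, and verify that the three kinds of terms on the right-hand side of the differential $d(z,(\omega,g),\alpha)$ appear with the correct signs after applying $\psi$. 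Everything else is either immediate from the definition or a direct appeal to the previously established chain-map properties.
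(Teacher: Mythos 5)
Your proposal is correct and follows essentially the same route as the paper: the chain-map property is checked slot by slot (Lemma \ref{pcmodcom} for the cycle slot, Theorem \ref{qinverse} for the $\mathcal{D}_{\A}$ slot, and Propositions \ref{qinvsing1} and \ref{qinvsing2} for the $(\omega,g)$ slot, split into the cases $r=2p$ and $r<2p$), the commutativity of the diagram is immediate from the definitions, and the quasi-isomorphism property of $\psi$ follows from two-out-of-three via $\beta$ and $\varphi$. The only cosmetic difference is that you make the two-out-of-three argument explicit where the paper leaves it implicit in the labeling of the diagram.
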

\begin{proof}
  If $z\in \mathcal{H}^{p}(X,\ast)_{0}$, then Lemma \ref{pcmodcom}
  implies that $d\psi ((z,0,0))=\psi (d(z,0,0))$. If $\alpha \in
  \mathcal{D}_{\A}^{2p-\ast}(X,p)_{0}$, then Theorem \ref{qinverse}
  implies that $d\psi((0,0,\alpha ))=\psi (d(0,0,\alpha ))$.

  Let $(\omega ,g)\in
  \mathcal{D}^{2p,-m}_{\A,\mathcal{Z}^{p}}(X,p)_{0}$. Let  $z\in
Z^{p}(X,m)_{0}\otimes \R$ such that $[(\omega ,g)]=\cl(z)$, and let
$\delta_{z}$ be the associated current. Then, using Proposition
\ref{qinvsing1}, we have
\begin{align*}
  d\psi ((0,(\omega ,g),0)&=-d_{\mathcal{D}}(\pi _{X})_{\ast}[g\bullet
  W_{m}]\\
  &=(\pi _{X})_{\ast}[\delta _{z}\bullet W_{m}]+(\pi _{X})_{\ast}[\delta
  g\bullet W_{m-1}]-(\pi _{X})_{\ast}[\omega \bullet W_{m}],
\end{align*}
and
\begin{align*}
  \psi (d(0,(\omega ,g),0))&=\psi ([(\omega ,g)],-(\delta \omega
  ,\delta g),-\omega )\\
  &=(\pi _{X})_{\ast}[\delta _{z}\bullet W_{m}]+(\pi _{X})_{\ast}[\delta
  g\bullet W_{m-1}]-(\pi _{X})_{\ast}[\omega \bullet W_{m}].
\end{align*}
If $(\omega ,g)\in
  \mathcal{D}^{r,-m}_{\A,\mathcal{Z}^{p}}(X,p)_{0}$, with $r<2p$,
  using Proposition \ref{qinvsing2}, we have
  \begin{align*}
    d\psi ((0,(\omega ,g),0)&=-d_{\mathcal{D}}(\pi _{X})_{\ast}[g\bullet
    W_{m}]\\
    &=-(\pi _{X})_{\ast}[d_{\mathcal{D}}g\bullet W_{m}]
    +(-1)^{r}(\pi _{X})_{\ast}[\delta g\bullet
    W_{m-1}],
  \end{align*}
and
\begin{align*}
    \psi (d(0,(\omega ,g),0))&=\psi ((0,(-d_{\mathcal{D}}\omega
    ,-\omega +d_{\mathcal{D}}g)-(-1)^{r}(\delta \omega
  ,\delta g),-\omega )\\
  &=-(\pi _{X})_{\ast}[d_{\mathcal{D}}g\bullet W_{m}]
    +(-1)^{r}(\pi _{X})_{\ast}[\delta g\bullet
    W_{m-1}].
\end{align*}
The fact that the diagram is commutative follows directly from the
definition of the maps involved.
\end{proof}

As an immediate consequence of Theorem \ref{thm:morcomp} we have

\begin{thm}\label{rhop} For all $n,p\geq 0$ the morphisms
  $$\mmP_{c},\rho: CH^p_c(X,n) \longrightarrow H_{\mmD}^{2p-n}(X,\R(p))$$
  agree.
\end{thm}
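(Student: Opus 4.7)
The plan is to extract Theorem \ref{rhop} as an immediate consequence of the comparison morphism $\psi$ and the commutative diagram constructed in Theorem \ref{thm:morcomp}. Recall from the end of section \ref{BFregulator} that Beilinson's regulator $\rho$ on $CH^p_c(X,n)$ is defined at the chain level as the composition
\begin{displaymath}
Z^{p}_c(X,*)_{0}\xrightarrow{\rho}
\mathcal{D}_{\A,\mathcal{H}}^{2p-\ast}(X,p)_{0},\qquad z\mapsto (f_1(z),0,0),
\end{displaymath}
followed by the identification of $H_*(\mathcal{D}_{\A,\mathcal{H}}^{2p-*}(X,p)_0)$ with $H^{2p-*}_{\mathcal{D}}(X,\R(p))$ via the quasi-isomorphism $\beta$. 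On the other hand, $\mathcal{P}_c$ is the map induced on homology by the chain morphism into $\tau\mathcal{D}^{2p-*}_{D}(X,p)$, whose homology computes Deligne-Beilinson cohomology directly.

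The upper square of the diagram of Theorem \ref{thm:morcomp} asserts exactly that $\psi\circ \rho = \mathcal{P}_c$ as chain morphisms, with $\psi$ a quasi-isomorphism. Passing to homology, one gets $\psi_{*}\circ [\rho] = [\mathcal{P}_c]$, so the only remaining point is to check that $\psi_*$ coincides with the canonical identification
\begin{displaymath}
H_*\bigl(\mathcal{D}_{\A,\mathcal{H}}^{2p-*}(X,p)_{0}\bigr)\cong
H^{2p-*}_{\mathcal{D}}(X,\R(p))\cong
H_*\bigl(\tau \mathcal{D}_{D}^{2p-*}(X,p)\bigr)
\end{displaymath}
used to view both $[\rho]$ and $[\mathcal{P}_c]$ as taking values in Deligne-Beilinson cohomology. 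This compatibility is encoded in the lower square of the diagram: under $\beta$ on the left (which defines the identification on the $\mathcal{D}_{\A,\mathcal{H}}$-side) and the standard forms-to-currents quasi-isomorphism on the right, the map $\psi$ restricts to $\varphi$, which by Theorem \ref{qinverse}, \ref{item:7} is a quasi-inverse to the canonical quasi-isomorphism $\tau\mathcal{D}^{2p-*}_{\log}(X,p)\to \mathcal{D}_{\A}^{2p-*}(X,p)_0$. Hence the two ways of seeing Deligne-Beilinson cohomology agree, so $\psi_*$ is the canonical identification.

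Combining these observations, $[\mathcal{P}_c] = \rho$ as maps $CH^p_c(X,n)\to H^{2p-n}_{\mathcal{D}}(X,\R(p))$. The only real obstacle in this plan is the construction and verification of $\psi$ as a chain morphism compatible with $\beta$ and $\varphi$, and this obstacle has already been handled by Theorem \ref{thm:morcomp} (together with Propositions \ref{qinvsing1} and \ref{qinvsing2} which make the differentials match up correctly on the image of $\mathcal{D}^{2p-*}_{\A,\mathcal{Z}^p}(X,p)_0$). Consequently, the proof itself amounts to invoking Theorem \ref{thm:morcomp} and reading off the cohomological consequence of its commutative diagram.
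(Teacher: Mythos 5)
Your proposal is correct and follows exactly the paper's route: the paper derives Theorem \ref{rhop} as an immediate consequence of the commutative diagram in Theorem \ref{thm:morcomp}, and your write-up simply makes explicit the (correct) bookkeeping of why the upper square gives $\psi\circ\rho=\mathcal{P}_{c}$ on homology and why the lower square ensures $\psi_{*}$ is the canonical identification of the two models of Deligne--Beilinson cohomology.
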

\vist

\subsection{Proof of the conjecture} At this point, we have seen that
the cubical version of Goncharov's construction agrees with the
regulator defined by Burgos-Feliu. In this section we prove the
Goncharov's conjecture \ref{goncharovconj}, by showing that the
cubical and the simplicial constructions agree. This will be done
through an intermediate complex consisting of both simplicial and
cubical affine schemes.

Let $X$ be an equidimensional quasi-projective algebraic scheme of
dimension $d$ over the field $k$. By a face of $X\times
\square^{n}\times \Delta ^{m}$ we understand any subset of the form
$X\times F\times G$, where $F$ is a face of $\square^{n}$ and $G$ is a
face of $\Delta ^{m}$.

Let $Z^{p}_{cs}(X,n,m)$ be the free abelian
group generated by the codimension $p$ closed irreducible subvarieties
of $X\times \square^{n}\times \Delta^m$, which intersect properly all
the faces of $X\times \square^n\times \Delta^m$. Then
$Z^{p}_{cs}(X,n,m)$  has a simplicial structure with faces
$\partial_{i}$ and a cubical structure with faces $\delta_i^j$.
 Let $Z_{cs}^p(X,n,m)_0$ be
the subgroup of $Z^{p}_{cs}(X,n,m)$ consisting of those elements that
lie in the kernel of $\delta_i^1$ for all $i=1,\dots,n$.

Consider the 2-iterated chain complex $Z_{cs}^p(X,*,*)_0$ whose piece
of degree $(n,m)$ is $Z_{cs}^p(X,n,m)_0$ and whose differentials are
$(\delta,\partial)$. Let $Z_{cs}^p(X,*)_0$ denote the simple complex
associated to $Z_{cs}^p(X,*,*)_0$.

\begin{prop}[\cite{Levine1}, Theorem 4.7]\label{isic}
  The natural morphisms
  $$    Z_{s}^p(X,*) \xrightarrow{i_s} Z_{cs}^p(X,*)_0, \qquad
  Z^p_c(X,*)_0  \xrightarrow{i_c} Z_{cs}^p(X,*)_0,
  $$
  are both quasi-isomorphisms.\hfill$\square$
\end{prop}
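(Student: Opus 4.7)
The plan is to prove both quasi-isomorphism claims using the two spectral sequences of the double complex $A_{n,m} := Z_{cs}^p(X,n,m)_0$, whose associated simple complex is $Z_{cs}^p(X,*)_0$. The essential input is the homotopy invariance of higher Chow groups $CH^p(Y \times \A^r, q) \cong CH^p(Y, q)$ applied to $\square^n \cong \A^n$ and $\Delta^m \cong \A^m$. First one observes that $i_s$ and $i_c$ are indeed the inclusions of the boundary strips: since $\square^0$ and $\Delta^0$ are points (so the cubical normalization is vacuous at $n=0$), we have $A_{0,m} = Z_s^p(X,m)$ and $A_{n,0} = Z_c^p(X,n)_0$, and on each boundary strip the orthogonal differential vanishes, so the inclusions are honest chain maps.

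For $i_c$, I would filter by simplicial degree $m$. The $E^1$ page is $E^1_{n,m} = H_n(A_{*,m},\delta) = CH_c^p(X\times\Delta^m,n)$, which by homotopy invariance of the cubical higher Chow groups is identified with $CH_c^p(X,n)$ via the projection $\pi_m\colon X\times\Delta^m\to X$. Under this identification, each simplicial face pullback $\partial_i^*$ becomes the identity (since $\pi_{m-1}=\pi_m\circ\partial_i$ forces $\partial_i^*\circ\pi_m^* = \pi_{m-1}^*$), so the induced $d^1$ is the alternating sum $\sum_{i=0}^m(-1)^i\,\mathrm{id}$. This complex has homology concentrated in degree $m=0$ with value $CH_c^p(X,n)$; the spectral sequence degenerates at $E^2$, yielding $H_N(Z_{cs}^p(X,*)_0)\cong CH_c^p(X,N)$, and the edge morphism coincides with $(i_c)_*$.

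For $i_s$, a symmetric argument uses the filtration by cubical degree $n$. Here one must first identify $H_m(A_{n,*},\partial)$ with $CH_s^p(X\times\square^n,m)$. This is where the cubical normalization interacts with the simplicial direction: the decomposition $Z^p_s(X\times\square^n,*)\cong NZ\oplus DZ$ as cubical abelian groups (with $DZ$ acyclic in the cubical direction) is compatible with the simplicial structure, so the complement is negligible in the total complex and $A_{n,*}$ carries the same simplicial homology as the full simplicial Bloch complex of $X\times\square^n$. By homotopy invariance in the simplicial setting this is $CH_s^p(X,m)$, and the induced $d^1$ is again an alternating sum of identities, forcing degeneration to $CH_s^p(X,*)$ concentrated at $n=0$ with edge map $(i_s)_*$.

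The main obstacle is the verification of two compatibilities: (i) that the $d^1$ differentials, once identified via homotopy invariance, really do reduce to alternating sums of identities, which rests on the naturality of the homotopy invariance isomorphism with respect to the coface inclusions; and (ii) in the case of $i_s$, the compatibility between the cubical normalization and the simplicial structure, i.e.\ a mild Eilenberg--Zilber statement showing that the degenerate cubical subcomplex $DZ\subset Z_s^p(X\times\square^n,*)$ does not affect the simplicial homology of the resulting columns. Both are standard once set up, but they are the technical heart of Levine's Theorem 4.7.
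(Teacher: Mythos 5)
The paper itself offers no proof of this proposition: it is quoted verbatim from Levine \cite{Levine1}, Theorem 4.7, so there is no in-house argument to compare against line by line. Your strategy --- the two spectral sequences of the first-quadrant double complex $A_{n,m}=Z^p_{cs}(X,n,m)_0$ with homotopy invariance as the main input, degenerating each $E^1$-row to an alternating sum of identities --- is indeed the standard route and is close in spirit to Levine's. However, as written it has a genuine gap at the very first step of each spectral sequence, namely the identification of the $E^1$-terms.

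The gap is this: a generator of $A_{n,m}$ is required to meet properly \emph{every} face $X\times F\times G$, with $F$ a face of $\square^n$ \emph{and} $G$ a face of $\Delta^m$. Hence the column $A_{*,m}$ is not $Z^p_c(X\times\Delta^m,*)_0$; it is the proper subcomplex of cycles that are in good position with respect to the additional simplicial faces $X\times\square^{*}\times G$ for $G\subsetneq\Delta^m$. Your assertion $H_n(A_{*,m},\delta)=CH^p_c(X\times\Delta^m,n)$ therefore presupposes that the inclusion of this subcomplex into the full cubical cycle complex of $X\times\Delta^m$ is a quasi-isomorphism. That is precisely the moving-lemma statement (cycles can be moved into good position with respect to a prescribed finite family of constraints without changing the homology of the cycle complex) which constitutes the real content of Levine's theorem, and it cannot be absorbed into ``naturality of homotopy invariance.'' The same issue recurs, compounded with the normalization question you do flag, in the identification $H_m(A_{n,*},\partial)\cong CH^p_s(X\times\square^n,m)$ needed for $i_s$. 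So of the technical obstacles, you have named (i) naturality of the homotopy-invariance isomorphism and (ii) the compatibility of cubical normalization with the simplicial direction, but omitted the third and most substantial one; without it the $E^1$-page computation, and hence the whole degeneration argument, does not get off the ground.
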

This result is the key to show that the higher Chow groups defined
using the cubical or the simplicial version agree. Moreover,
it follows that the higher algebraic Chow groups can also be computed
in terms of the complex $Z_{cs}^p(X,*)_0$,
that is $CH^p(X,n) \cong H_n(Z_{cs}^p(X,*)_0).$
As usual, we denote
$$CH^p_{cs}(X,n) = H_n(Z_{cs}^p(X,*)_0).$$

Assume that $X$ is an equidimensional projective complex algebraic manifold.
The strategy pursued to prove the conjecture is the following. We will construct
a regulator map
$$CH^p_{cs}(X,n) \xrightarrow{\mmP_{cs}} H^{2p-n}_{\mmD}(X,\R(p))$$
and show that there is a big commutative square
\begin{equation}\label{strategy}
\xymatrix{
 CH_{s}^p(X,n)  \ar[dd]^{\cong}_{i_c^{-1}i_s}  \ar[dr]^{\cong}_{i_s}
 \ar[drrr]^{\mmP_s} \\
 & CH^p_{cs}(X,n) \ar[rr]^{\mmP_{cs}} && H^{2p-n}_{\mmD}(X,\R(p)) \\
 CH_{c}^p(X,n) \ar[ur]_{\cong}^{i_c}\ar[urrr]_{\mmP_c}
}
\end{equation}
Then, since  $\mmP_c$ is Beilinson's regulator, it will
follow that so are $\mmP_s$ and $\mmP_{cs}$.

\medskip
\textbf{The morphism $\mmP_{cs}$.}
Recall again the projective coordinates $(x_{i}:y_{i})$ on the
$i$-th projective line in $\square^n\subset (\P^1)^n$ and
the homogeneous coordinates $(z_0:\dots:z_m)$ on $\Delta^m\subset \P^m$.
We denote
\begin{equation}
  M_{n,m}=T_{n+m}\Big(\frac{y_{1}}{x_{1}},\dots,\frac{y_{n}}{x_{n}},
  \frac{z_{1}}{z_{0}},\dots,\frac{z_{m}}{z_{0}}\Big),
\end{equation}
which is a differential form on $\square^n\times \Delta^m$ with
logarithmic singularities along
$\partial \square^n\times \Delta^m\cup \square^n\times \partial \Delta^m$.
In particular, $M_{n,0}=W_n$ and $M_{0,m}=G_m$.
Consider the group morphism
\begin{eqnarray}
Z^p_{cs}(X, n, m)_0 &  \xrightarrow{\mmP_{cs}}  &  \tau\mmD_{D}^{2p-m}(X,p)
\label{pcs} \\
Z &\mapsto & (\pi_{X})_{*}(\delta_Z \wedge M_{n,m}) \nonumber
\end{eqnarray}
if $Z$ is an irreducible codimension $p$ subvariety of $X\times
\square^n \times \Delta^m$ intersecting properly the faces of   $X\times
\square^n \times \Delta^m$, analogous to the definition of $\mmP_c,\mmP_s$.


\begin{lemma}
  The morphism
  $$  Z_{cs}^p(X,*)_0 \xrightarrow{\mmP_{cs}}  \tau\mmD_{D}^{2p-*}(X,p)$$
  is a chain morphism.
\end{lemma}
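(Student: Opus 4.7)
The plan is to reduce the statement to the residue calculus for the differential form $M_{n,m}$ and then to match the resulting formula with the total differential on the simple complex $s(Z_{cs}^p(X,*,*)_0)$.

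First, I would establish the analogue of Theorems \ref{thm:wdif} and \ref{thm:gdif} for the mixed form $M_{n,m}$. Since
\begin{displaymath}
M_{n,m}=T_{n+m}\Big(\tfrac{y_1}{x_1},\dots,\tfrac{y_n}{x_n},\tfrac{z_1}{z_0},\dots,\tfrac{z_m}{z_0}\Big),
\end{displaymath}
applying Proposition \ref{prop:dif}, equation \eqref{eq:8}, to these $n+m$ rational functions on $(\P^1)^n\times \P^m$ yields
\begin{align*}
d_{\mathcal{D}}[M_{n,m}]&=\sum_{i=1}^n\sum_{j=0,1}(-1)^{i+j}(\delta_j^i)_{\ast}[M_{n-1,m}]\\
&\quad +(-1)^n\sum_{i=0}^{m}(-1)^i(\partial^i)_{\ast}[M_{n,m-1}].
\end{align*}
Here the only surviving residues of $\tfrac{y_1}{x_1}\wedge\dots\wedge\tfrac{z_m}{z_0}$ are along the coordinate divisors $\{x_i=0\}$, $\{y_i=0\}$, $\{z_0=0\}$, $\{z_i=0\}$, and the combinatorics of the signs reproduce those of Theorems \ref{thm:wdif} and \ref{thm:gdif}; the extra factor $(-1)^n$ reflects the position of the simplicial arguments among the $n+m$ inputs of $T_{n+m}$.

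Second, for an irreducible $Z\in Z_{cs}^p(X,n,m)_0$ that intersects every face of $X\times\square^n\times\Delta^m$ properly, the resolution-of-singularities argument in the proof of Proposition \ref{prop:dif} applies verbatim. Pulling $M_{n,m}$ back to a resolution $\widetilde Z$ of $\overline Z$ in $X\times(\P^1)^n\times\P^m$ and using properness of the intersections, the restriction of $M_{n,m}$ to the preimage of each face $\delta_j^i$ (resp.\ $\partial^i$) is $M_{n-1,m}$ (resp.\ $M_{n,m-1}$) evaluated on the corresponding face cycle. Pushing forward by $\iota$ and $\pi_X$ and using that $\pi_X$ commutes with $d_{\mathcal{D}}$ on currents, I obtain
\begin{align*}
d_{\mathcal{D}}\mmP_{cs}(Z)&=\sum_{i=1}^n\sum_{j=0,1}(-1)^{i+j}\mmP_{cs}(\delta_j^i Z)+(-1)^n\sum_{i=0}^m(-1)^i\mmP_{cs}(\partial^i Z).
\end{align*}

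Third, I would match this expression with the total differential on the simple complex $s(Z_{cs}^p(X,*,*)_0)$. Since $Z$ lies in the cubical normalization $\bigcap_i\ker\delta_1^i$, the contributions with $j=1$ vanish and the first sum reduces to $\mmP_{cs}(\delta Z)$ with $\delta=\sum_{i=1}^n(-1)^i\delta_0^i$, while the second sum is $(-1)^n\mmP_{cs}(\partial Z)$; these are precisely the two pieces of the total differential on the simple complex (with the standard Koszul sign between the cubical and simplicial directions). This proves $d_{\mathcal{D}}\mmP_{cs}=\mmP_{cs}\circ d$.

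The main obstacle is purely bookkeeping: tracking the sign $(-1)^n$ produced by the residue calculation of $T_{n+m}$ and confirming that it agrees with the sign convention used in the simple of the bi-chain complex, while simultaneously verifying that the cubical normalization eliminates exactly the $\delta_1^i$-residues. Once these signs are reconciled, the identity follows directly from the displayed formulas and the definitions of the face differentials.
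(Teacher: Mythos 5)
Your proposal is correct and follows essentially the same route as the paper: the paper's proof likewise rests on the current identity $d_{\mathcal{D}}[M_{n,m}]=\sum_{i,j}(-1)^{i+j}(\delta^i_j\times 1)_{\ast}[M_{n-1,m}]+(-1)^n\sum_i(-1)^i(1\times\partial^i)_{\ast}[M_{n,m-1}]$ deduced from Proposition \ref{prop:dif}, and then states that the lemma follows. You simply spell out the bookkeeping (normalization killing the $\delta^i_1$ terms, the Koszul sign $(-1)^n$) that the paper leaves implicit.
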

\begin{proof}
Denote by $[M_{n,m}]$ the current on $(\P^1)^n\times \P^m$ associated
to $M_{n,m}$.
By Proposition \ref{prop:dif}, one can prove the equation of currents
\begin{displaymath}
    d_{\mathcal{D}}[M_{n,m}]=\sum_{i=1}^{n}\sum_{j=0,1}(-1)^{i+j}
    (\delta ^{i}_{j}\times 1)_{\ast} [M_{n-1,m}]+(-1)^n
    \sum_{i=0}^{m}(-1)^{i}(1\times \partial ^{i})_{\ast} [M_{n,m-1}].
  \end{displaymath}
The result follows easily from this relation.
\end{proof}

The proof of next lemma is straightforward.

\begin{lemma}\label{commutativediagrams}
  The diagrams
$$  \begin{array}{ccc}
\xymatrix@R=5pt{
     Z_{s}^p(X,*) \ar[dd]^{\sim}_{i_s} \ar[drr]^{\mmP_s} \\ & &
     \tau\mmD^{2p-*}_{D}(X,p) \\ Z_{cs}^p(X,*)_0 \ar[urr]_{\mmP_{cs}}} &
   \qquad &
\xymatrix@R=5pt{
     Z_{c}^p(X,*)_0 \ar[dd]^{\sim}_{i_c} \ar[drr]^{\mmP_c} \\ & &
    \tau \mmD^{2p-*}_{D}(X,p) \\ Z_{cs}^p(X,*)_0 \ar[urr]_{\mmP_{cs}}}
\end{array} $$
are commutative.
  \end{lemma}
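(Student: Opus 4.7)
The plan is simply to unwind the definitions and check the two diagrams pointwise on irreducible cycles, since all the regulators are extended by linearity. The argument relies on two observations: first, that the inclusions $i_{s}$ and $i_{c}$ just place a cycle into the appropriate bidegree slice of the double complex $Z^{p}_{cs}(X,*,*)_{0}$, and second, that the form $M_{n,m}$ degenerates to $G_{m}$ and $W_{n}$ at the corresponding edges of the bi-index.

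More precisely, I would first recall that for $Z\subset X\times \Delta^{m}$ irreducible and admissible, $i_{s}(Z)$ is the same cycle viewed as an element of $Z^{p}_{cs}(X,0,m)_{0}\subset Z^{p}_{cs}(X,*)_{0}$, and similarly $i_{c}(Z)$ for $Z\subset X\times \square^{n}$ sits in bidegree $(n,0)$. In particular the currents of integration coincide: $\delta_{i_{s}(Z)} = \delta_{Z}$ and $\delta_{i_{c}(Z)} = \delta_{Z}$.

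Next, I would verify directly from the formula
\begin{displaymath}
M_{n,m}=T_{n+m}\Bigl(\tfrac{y_{1}}{x_{1}},\dots,\tfrac{y_{n}}{x_{n}},
  \tfrac{z_{1}}{z_{0}},\dots,\tfrac{z_{m}}{z_{0}}\Bigr)
\end{displaymath}
that $M_{0,m}=T_{m}(z_{1}/z_{0},\dots,z_{m}/z_{0})=G_{m}$ and $M_{n,0}=T_{n}(y_{1}/x_{1},\dots,y_{n}/x_{n})=W_{n}$, which is immediate from the respective definitions of $G_{m}$ and $W_{n}$.

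Combining these two observations, for $Z$ a simplicial admissible cycle of codimension $p$ in $X\times\Delta^{m}$ one gets
\begin{displaymath}
\mathcal{P}_{cs}(i_{s}(Z))=(\pi_{X})_{\ast}(\delta_{Z}\wedge M_{0,m})=(\pi_{X})_{\ast}(\delta_{Z}\wedge G_{m})=\mathcal{P}_{s}(Z),
\end{displaymath}
and for $Z$ a cubical admissible cycle in $X\times\square^{n}$,
\begin{displaymath}
\mathcal{P}_{cs}(i_{c}(Z))=(\pi_{X})_{\ast}(\delta_{Z}\wedge M_{n,0})=(\pi_{X})_{\ast}(\delta_{Z}\wedge W_{n})=\mathcal{P}_{c}(Z).
\end{displaymath}
Extending by linearity yields the commutativity of both diagrams. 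There is no real obstacle; the only point to be careful about is the identification of the cycle $Z$ with $i_{s}(Z)$ (resp.\ $i_{c}(Z)$) as subschemes of $X\times \square^{0}\times \Delta^{m}=X\times\Delta^{m}$ (resp.\ $X\times\square^{n}\times\Delta^{0}=X\times\square^{n}$), which is why the author can legitimately call the proof straightforward.
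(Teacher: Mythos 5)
Your proof is correct and is exactly the argument the paper has in mind: the authors simply declare the lemma straightforward, and the verification amounts to the two observations you make, namely that $i_{s}$ and $i_{c}$ place a cycle in bidegrees $(0,m)$ and $(n,0)$ of $Z^{p}_{cs}(X,*,*)_{0}$, and that $M_{0,m}=G_{m}$ and $M_{n,0}=W_{n}$ (the latter identity is even stated explicitly in the paper right after the definition of $M_{n,m}$). Nothing is missing.
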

  \vist

\begin{thm}
  Let $X$ be an  equidimensional projective complex algebraic
  manifold. Let $\mmP'_s$ be the composition of $\mmP_s$ with the
  isomorphism
given by the Chern character of \cite{Bloch1}
$$\mmP_s': K_n(X)_{\Q} \xrightarrow{\cong} \bigoplus_{p\geq 0} CH^{p}(X,n)_{\Q} \xrightarrow{\mmP_s}
 \bigoplus_{p\geq 0}H^{2p-n}_{\mmD}(X,\R(p)).$$
Then, the morphism $\mmP_s'$ agrees with  Beilinson's regulator.
\end{thm}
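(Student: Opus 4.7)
The plan is to carry out exactly the strategy outlined in diagram \eqref{strategy}: combine the commutativity of the two small triangles from Lemma \ref{commutativediagrams}, the fact from Proposition \ref{isic} that both $i_s$ and $i_c$ are quasi-isomorphisms, and the identification of $\mathcal{P}_c$ with Beilinson's regulator established in Theorem \ref{rhop} together with Theorem \ref{beichow}. No new computation is needed; everything is formal once the previous results are in place.

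First I would pass to homology. Lemma \ref{commutativediagrams} gives chain-level identities $\mathcal{P}_s = \mathcal{P}_{cs}\circ i_s$ and $\mathcal{P}_c = \mathcal{P}_{cs}\circ i_c$. Taking $H_n$ and using that $i_s$ and $i_c$ induce isomorphisms
\begin{displaymath}
  CH_s^p(X,n) \xrightarrow{\;\sim\;} CH_{cs}^p(X,n) \xleftarrow{\;\sim\;} CH_c^p(X,n),
\end{displaymath}
I obtain a canonical identification $CH_s^p(X,n) \cong CH_c^p(X,n)$ under which the two regulators $\mathcal{P}_s$ and $\mathcal{P}_c$ correspond: more precisely, $\mathcal{P}_s = \mathcal{P}_c \circ (i_c^{-1} i_s)$ as maps into $H_{\mathcal{D}}^{2p-n}(X,\R(p))$.

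Next, Theorem \ref{rhop} identifies $\mathcal{P}_c$ with the Burgos--Feliu regulator $\rho$ on $CH_c^p(X,n)$, and Theorem \ref{beichow} states that $\rho$, precomposed with the Bloch Chern character isomorphism $K_n(X)_{\Q} \cong \bigoplus_p CH^p(X,n)_{\Q}$, is Beilinson's regulator. Composing these identifications with the isomorphism $CH_s^p(X,n)_{\Q} \cong CH_c^p(X,n)_{\Q}$ from the previous paragraph yields the conclusion for $\mathcal{P}_s'$.

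There is no real obstacle: the only point requiring a minimum of care is checking that the isomorphism $CH_s^p(X,n)_{\Q} \cong CH_c^p(X,n)_{\Q}$ used implicitly in Bloch's Chern character construction in \cite{Bloch1} is indeed the one induced by $i_c^{-1} i_s$ of Proposition \ref{isic}; this is the content of Levine's comparison (\cite{Levine1}, Theorem 4.7), so invoking that reference closes the argument. The proof therefore reduces to assembling diagram \eqref{strategy} and citing Lemma \ref{commutativediagrams}, Proposition \ref{isic}, Theorem \ref{rhop} and Theorem \ref{beichow}.
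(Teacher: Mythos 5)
Your proposal is correct and follows exactly the paper's own argument: assemble diagram \eqref{strategy} from Lemma \ref{commutativediagrams} and Proposition \ref{isic}, then conclude via Theorem \ref{rhop} and Theorem \ref{beichow}. The extra remark about the compatibility of Bloch's Chern character isomorphism with $i_c^{-1}i_s$ (via \cite{Levine1}, Theorem 4.7) is a reasonable point of care that the paper leaves implicit, but the substance is identical.
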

\begin{proof}
  From  Lemma \ref{commutativediagrams} we see
  that there is a commutative diagram \eqref{strategy}. Then, the
  statement
  follows from Theorem \ref{beichow} together with Theorem \ref{rhop}
\end{proof}

\section{Higher arithmetic Chow groups}
In this last section, we use the comparison of regulators performed in
the previous sections to show that the higher arithmetic Chow groups
given by Goncharov agree with the ones given by Burgos and Feliu, for
all proper arithmetic varieties.

Following \cite{GilletSouleIHES}, by an \emph{arithmetic variety} $X$
over a ring $A$ we mean a
regular scheme $X$ which is flat and quasi-projective over an
arithmetic ring $A$.

Assume that $X$ is a smooth proper variety defined over an
arithmetic field $F$.
Then we obtain the associated real variety
$X_{\R}=(X_{\mathbb C}, F_{\infty })$, that is, a projective complex
algebraic manifold $X_{\mathbb C}$ equipped with
an anti-holomorphic involution $F_{\infty }$.
We write
\begin{equation}
  \label{eq:20}
   \mmD^{\ast}(X,p)=
 \mmD_{D}^{\ast}(X_{\mathbb{C}},p)^{\overline{F}_{\infty }=\IId},\qquad
 \mmD_{D}^{\ast}(X,p)=
 \mmD_{D}^{\ast}(X_{\mathbb{C}},p)^{\overline{F}_{\infty }=\IId }.
\end{equation}

Then we have the regulator map for $X$:
$$
\mathcal P_s: Z_s^p(X,n)\longrightarrow
 \tau\mmD_{D}^{2p-m}(X,p).
$$
Let $Z\mathcal{D}^{2p}(X,p)$ be the space of cycles of degree $2p$ in the
Deligne complex of smooth differential forms on $X$, considered as a
chain complex concentrated in degree zero.

\begin{df}[Goncharov]
Let
 {\small $$
\widehat{Z}_G^p(X, *)=
s\left(\begin{array}{c}\xymatrix@C=0.5pt{
& \tau\mathcal{D}^{2p-\ast}_{D}(X,p) &\\
Z^{p}_{s}(X,\ast)\ar[ur]^{\mathcal{P}_{s}}&&
Z\mathcal{D}^{2p}(X,p)\ar[ul]_{[\cdot]}
} \end{array}\right)
$$ }
be the simple of the diagram.
The \emph{higher arithmetic Chow groups} of $X$ are defined to be
the homology groups of this complex:
$$
\widehat{CH}_{G}^p(X,n)=H_n(\widehat{Z}_G^p(X, *)).
$$
\end{df}

We define
$$\mmD^{\ast}_{\A,\mathcal{H}}(X,p)_{0}:=
\mmD^{\ast}_{\A,\mathcal{H}}(X,p)_{0}
^{\overline{F}_{\infty}^*=id}.$$

We denote by $\beta :Z\mathcal{D}^{2p}(X,p)\longrightarrow \mathcal{D}^{2p-\ast}_{\A,\mathcal{H}}(X,p)_{0}$ the map
given by $\beta (\alpha )=(0,0,\alpha )$.

\begin{df}[Burgos-Feliu]
Let
 {\small $$ \widehat{Z}_{BF}^p(X, *)=
s\left(\begin{array}{c}\xymatrix@C=0.5pt{
& \mathcal{D}^{2p-\ast}_{\A,\mathcal{H}}(X,p)_{0} &\\
Z^{p}_{c}(X,\ast)_{0}\ar[ur]^{\rho }&&
Z\mathcal{D}^{2p}(X,p)\ar[ul]_{\beta } } \end{array}\right) $$
}
be the simple of the diagram.
The \emph{higher arithmetic Chow groups} of $X$ as defined in \cite{BFChow} are
the homology groups of this complex:
$$
\widehat{CH}_{BF}^p(X,n)=H_n(\widehat{Z}_{BF}^p(X, *)).
$$
\end{df}

\begin{thm}\label{chowagree} For all $n,p\ge 0$ there are
  natural isomorphisms
  $$\widehat{CH}_{G}^p(X,n)\longrightarrow \widehat{CH}_{BF}^p(X,n).$$
\end{thm}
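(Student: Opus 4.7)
The plan is to exhibit $\widehat{Z}^p_G(X,*)$ and $\widehat{Z}^p_{BF}(X,*)$ as the endpoints of a short zigzag of quasi-isomorphisms between V-shaped diagrams of chain complexes, passing through a ``cubical Goncharov'' and then a ``cubical-simplicial Goncharov'' intermediate. The two substantive inputs are already established in the excerpt: Theorem \ref{thm:morcomp} provides a quasi-isomorphism $\psi: \mathcal{D}^{2p-*}_{\A,\mathcal{H}}(X,p)_{0} \to \tau\mathcal{D}^{2p-*}_D(X,p)$ with $\psi\circ \rho = \mathcal{P}_c$, while Proposition \ref{isic} and Lemma \ref{commutativediagrams} relate the cubical, simplicial and cubical-simplicial Bloch complexes together with their respective regulator maps. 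Since the simple-complex functor carries quasi-isomorphisms of diagrams to quasi-isomorphisms of total complexes (which follows from the five lemma applied to the long exact sequence of each cone), each step of the zigzag produces a quasi-isomorphism of arithmetic Chow complexes, and the composition yields the claimed natural isomorphism on homology.

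First I would apply $\psi$ to the middle term of the BF diagram. A direct calculation, using that $W_0 = 1$ and that $\pi_X: X \times (\P^1)^0 \to X$ is the identity, gives
$$\psi \circ \beta (\alpha) = \varphi(\alpha) = (\pi_X)_{*}[\alpha \bullet W_0] = [\alpha],$$
so together with $\psi \circ \rho = \mathcal{P}_c$ from Theorem \ref{thm:morcomp} the triple $(\mathrm{id}, \psi, \mathrm{id})$ is a morphism of V-shaped diagrams from the BF diagram to the diagram $Z^p_c(X,*)_{0} \xrightarrow{\mathcal{P}_c} \tau\mathcal{D}^{2p-*}_D(X,p) \xleftarrow{[\cdot]} Z\mathcal{D}^{2p}(X,p)$ whose two outer verticals are identities and whose middle is a quasi-isomorphism. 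Denoting the simple of the target by $\widehat{Z}^p_{G,c}(X,*)$, this gives a quasi-isomorphism $\widehat{Z}^p_{BF}(X,*) \xrightarrow{\sim} \widehat{Z}^p_{G,c}(X,*)$.

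Next I would introduce the cubical-simplicial Goncharov complex
$$\widehat{Z}^p_{G,cs}(X,*) = s\bigl( Z^p_{cs}(X,*)_{0} \xrightarrow{\mathcal{P}_{cs}} \tau\mathcal{D}^{2p-*}_D(X,p) \xleftarrow{[\cdot]} Z\mathcal{D}^{2p}(X,p)\bigr)$$
and form the triples $(i_c, \mathrm{id}, \mathrm{id})$ and $(i_s, \mathrm{id}, \mathrm{id})$. By Proposition \ref{isic} both $i_c$ and $i_s$ are quasi-isomorphisms of chain complexes, and by Lemma \ref{commutativediagrams} both triples are morphisms of V-shaped diagrams. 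Passing to simple complexes yields quasi-isomorphisms $\widehat{Z}^p_{G,c}(X,*) \xrightarrow{\sim} \widehat{Z}^p_{G,cs}(X,*) \xleftarrow{\sim} \widehat{Z}^p_G(X,*)$, which concatenated with the first step produce the desired natural isomorphism $\widehat{CH}^p_{BF}(X,n) \cong \widehat{CH}^p_G(X,n)$.

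The only subtlety I expect is the bookkeeping required to pass to the $\overline{F}_\infty$-invariant subcomplexes implicit in \eqref{eq:20}: one must verify that $\psi$, $\mathcal{P}_c$, $[\cdot]$, $i_c$, $i_s$, $\mathcal{P}_{cs}$ and $\beta$ are all defined over $\R$ and commute with pullback by the anti-holomorphic involution $F_\infty$, so that the zigzag descends to the invariant parts. This is routine because the underlying complexes of cycles and currents carry natural real structures and all the maps are built from $\R$-linear operations (pushforwards, cup products with the real forms $W_m$, $G_m$, $M_{n,m}$, resolutions of singularities, etc.), but it should be spelled out. Beyond this, the argument is purely formal, the analytic and combinatorial content having already been absorbed into Theorems \ref{rhop} and \ref{thm:morcomp}.
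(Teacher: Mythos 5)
Your proof is correct and follows essentially the same route as the paper: the paper's own argument introduces exactly the same cubical and cubical--simplicial analogues $\widehat{CH}^p_c$ and $\widehat{CH}^p_{cs}$ of Goncharov's groups and deduces the isomorphisms from Theorem \ref{thm:morcomp}, Lemma \ref{commutativediagrams} and Proposition \ref{isic}. Your additional checks (that $\psi\circ\beta=[\cdot]$ via $W_0=1$, and the $\overline{F}_\infty$-invariance bookkeeping) are details the paper leaves implicit, but they do not change the argument.
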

\begin{proof}
  Let $\widehat{CH}_{c}^p(X,n)$ and $\widehat{CH}_{cs}^p(X,n)$ be the
  analogues of $\widehat{CH}_{G}^p(X,n)$ defined using the cubical and
  the cubical-simplicial setting given by the morphisms $\mmP_c$ and $\mmP_{cs}$. It follows from Lemma \ref{commutativediagrams} and Proposition \ref{isic} that there are natural isomorphisms
  \begin{displaymath}
    \widehat{CH}_{G}^p(X,n)\xrightarrow{\cong}\widehat{CH}_{cs}^p(X,n)
    \xleftarrow{\cong}\widehat{CH}_{c}^p(X,n).
  \end{displaymath}
By the commutative diagram of Theorem \ref{thm:morcomp} there are natural isomorphisms
  \begin{displaymath}
    \widehat{CH}_{BF}^p(X,n)\xrightarrow{\cong}\widehat{CH}_{c}^p(X,n),
  \end{displaymath}
  and the theorem is proved.
  \end{proof}

As a consequence, we can transfer properties from one definition of
higher arithmetic Chow
groups to the other. In particular we obtain the following result.

\begin{cor} Let $X$ be a projective arithmetic variety over an
  arithmetic field and let $\widehat{CH}_G^p(X,n)$ denote the higher
  arithmetic Chow
groups defined by Goncharov.
\begin{itemize}
\item \emph{(Pull-back)}: Let $f:X\rightarrow Y$ be a morphism between
  two projective arithmetic
varieties over a field. Then, there is a pull-back morphism
$$\widehat{CH}^p_G(Y,n) \xrightarrow{f^*} \widehat{CH}^p_G(X,n), $$ for every
$p$ and $n$, compatible with the pull-back maps on the groups
$CH^p(X,n)$ and $H_{\mmD}^{2p-n}(X,\R(p))$.
\item \emph{(Product)}: There exists a product on
$$\widehat{CH}^*_G(X,*):=\bigoplus_{p\geq 0,n\geq 0}
\widehat{CH}^p_G(X,n), $$ which is associative, graded commutative
with
respect to the degree $n$.
\end{itemize}
  \end{cor}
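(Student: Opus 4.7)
The plan is to deduce the corollary from Theorem \ref{chowagree} by transporting structures that are already established in \cite{BFChow} for $\widehat{CH}^p_{BF}(X,n)$ across the natural isomorphism $\widehat{CH}_G^p(X,n) \cong \widehat{CH}_{BF}^p(X,n)$. Since the pullback and product are not intrinsic to Goncharov's cycle/current complex (whose currents do not admit general pullbacks), the whole point is that the isomorphism provided by the preceding theorem is natural enough for the transfer to make sense.

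First, I would recall from \cite{BFChow} that there is a pullback $f^{\ast}\colon \widehat{Z}^p_{BF}(Y,\ast) \to \widehat{Z}^p_{BF}(X,\ast)$ for any morphism $f\colon X\to Y$ of projective arithmetic varieties over $F$, induced componentwise by the flat pullback of cycles on $Z^p_c(-,\ast)_0$ and by the pullback of differential forms with logarithmic singularities on $\mathcal{D}^{2p-\ast}_{\A,\mathcal{H}}(-,p)_0$ and $Z\mathcal{D}^{2p}(-,p)$; moving cycles by the Bloch moving lemma reduces to this case. Similarly, the external product coming from the product $\bullet$ on the Deligne complexes of logarithmic forms, together with the intersection product on algebraic cycles in good position and the diagonal pullback, yields a product on $\widehat{CH}^{\ast}_{BF}(X,\ast)$ that is associative and graded commutative with respect to $n$. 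Both constructions are compatible with the forgetful maps to $CH^p(X,n)$ and to $H^{2p-n}_{\mmD}(X,\R(p))$.

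Next, I would observe that the chain of quasi-isomorphisms producing the isomorphism in Theorem \ref{chowagree}, namely
\begin{equation*}
\widehat{CH}^p_{BF}(X,n) \xrightarrow{\sim} \widehat{CH}^p_c(X,n) \xleftarrow{\sim} \widehat{CH}^p_{cs}(X,n) \xleftarrow{\sim} \widehat{CH}^p_G(X,n),
\end{equation*}
is functorial in $X$: each intermediate complex is built from $Z^p_\bullet(X,\ast)$-type groups and from $\tau\mathcal{D}^{2p-\ast}_D(X,p)$ or $\mathcal{D}^{2p-\ast}_{\A,\mathcal{H}}(X,p)_0$, all of which enjoy the relevant functorialities, and the comparison maps $i_c,i_s$ and $\psi$ commute with them on the nose (cf. Lemma \ref{commutativediagrams} and Theorem \ref{thm:morcomp}). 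Thus transporting $f^{\ast}$ and the product through this chain yields a pullback and a product on $\widehat{CH}^{\ast}_G(X,\ast)$ with the asserted compatibilities.

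The only place where one must be slightly careful, and what I would flag as the main obstacle, is the compatibility of $f^\ast$ with cycles not a priori in general position: one handles it exactly as in \cite{BFChow}, by showing that the subcomplex generated by cycles meeting every face properly and pulling back well under $f$ is quasi-isomorphic to the full complex (via Bloch's moving lemma) and that the isomorphism of Theorem \ref{chowagree} restricts to a quasi-isomorphism of these subcomplexes. Once this is verified, the corollary follows formally.
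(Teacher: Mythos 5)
Your proposal is correct and follows essentially the same route as the paper, whose proof is simply the observation that the pull-back and product established in \cite{BFChow} for $\widehat{CH}_{BF}^p(X,n)$ transfer across the natural isomorphisms of Theorem \ref{chowagree}. The extra care you take about naturality of the comparison chain and about cycles in general position is sound but concerns the constructions of \cite{BFChow} themselves, which the paper treats as a black box.
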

\begin{proof}
It follows from Theorem \ref{chowagree} together with the results in
\cite{BFChow}, where these properties are shown for
$\widehat{CH}_{BF}^p(X,n)$ .
\end{proof}


\end{document}